\newtheorem{thm}{Theorem}[section]
\newtheorem{lem}[thm]{Lemma}
\newtheorem{prop}[thm]{Proposition}
\newtheorem{cor}[thm]{Corollary}
\newtheorem{de}[thm]{Definition}
\newtheorem{rem}[thm]{Remark}
\newtheorem{note}[thm]{Notation}
\newcommand{\br}{{\mathbb{R}}}
\newcommand{\bz}{{\mathbb{Z}}}
\newcommand{\bn}{{\mathbb{N}}}
\newcommand{\cb}{{\mathcal{B}}}
\newcommand{\cd}{{\mathcal{D}}}
\newcommand{\ce}{{\mathcal{E}}}
\newcommand{\cs}{{\mathcal{S}}}
\newcommand{\ttx}{{\left(\frac{1}{2x}\right)}}
\begin{document}

\title{Bilinear Forms on Skein Modules and Steps in Dyck Paths}

\author{Xuanting Cai}
\address{Mathematics Department\\
Louisiana State University\\
Baton Rouge, Louisiana}
\email{xcai1@math.lsu.edu}
\author{Toufik Mansour}
\address{Department of Mathematics\\
University of Haifa\\
31905 Haifa, Israel}
\email{toufik@math.haifa.ac.il}

\subjclass{}
\date{\today}

\begin{abstract}
We use Jones-Wenzl idempotents to construct bases for the relative Kauffman bracket skein module of a square with $n$ points colored $1$
and one point colored $h$.
We consider a natural bilinear form on this skein module.
We calculate the determinant of the matrix for this form with respect to the natural basis.
We reduce the computation to count some steps in generalized Dyck paths.
Moreover, we relate our determinant to a determinant on semi-meanders.
\end{abstract}
\maketitle

\section{Introduction}
Quantum topology is the study of new invariants that arose after the discovery of the Jones polynomial in 1984 \cite{J}.
After using ideas from physics, Witten \cite{W} suggested how these invariants could be viewed more intrinsically and extended to 3-manifolds.
Vaughan Jones discovered his famous knot polynomial in \cite{J}, which triggered the developments relating knot theory, topological quantum field theory,
and statistical physics \cite{K1} \cite{W} \cite{TL}.
A central role in those developments was played by the Temperley-Lieb algebra.
In \cite{L1}, Lickorish constructed quantum invariants for 3-manifolds from the Temperley-Lieb algebra $TL_n$.
He used a natural bilinear form on $TL_n$. Our aim is to generalize this skein module and the bilinear form.
As a module, the Temperley-Lieb algebra $TL_n$ can be considered as a skein module of a square with $2n$ points on the boundary.
Then skein modules of a square with points on the boundary colored by non-negative integers are a natural generalization of $TL_n$.
The same methods \cite{C} that the first author has employed in studying $TL_n$ may be adapted to this more general situation.
In order to understand this, we consider the skein module of a square $S(I\times I,n,h)$ with $n$ points on $I\times\{0\}$ and one point colored $h$ on $I\times\{1\}$. 
We consider the natural generalization of Lickorish's bilinear form on $S(I\times I,n,h)$,
and define the determinant of the bilinear form with respect to a natural basis $\cb_n^h$.
We find that the determinant that we calculated is related to a semi-meander determinant that was suggested to the first author by his advisor Patrick Gilmer.
This semi-meander determinant is different from the semi-meander determinant defined by Di Francesco in \cite{F}.

We compute the determinant of the bilinear form using an orthogonal basis $\cd_n^h$, which is motivated by \cite{BHMV}.
The transform matrix between $\cb_n^h$ and $\cd_n^h$ is an upper triangular matrix with $1$'s on the diagonal.
So the determinant we get by using the basis $\cd_n^h$ is the same as the determinant we get by using the basis $\cb_n^h$.
In the calculation, we set up a correspondence between the elements of $\cd_n^h$ and generalized Dyck paths on $\br^2$.
The problem is then reduced to count certain steps in all generalized Dyck paths from $(0,0)$ to $(n,h)$.
In Section \ref{ACR}, we present two different proofs for the counting problem.
The first method is geometric.
It is a generalization of the method used by Di Francesco for the case $h=0$.
The second method uses generating functions.

\section{The Skein Module $S(I\times I,n,h)$}
\label{TSM}
Let $F$ be an oriented surface with a finite collection of points specified in its boundary $\partial F$.
A link diagram in the surface $F$ consists of finitely many arcs and closed curves in $F$,
with a finite number of transverse crossings, each assigned over or under information.
The endpoints of the arcs must be the specified points on $\partial F$.
We define the skein of $F$ as follows:

\begin{de}
Suppose $A$ is a variable.
Let $\Lambda$ be the ring $\bz[A,A^{-1}]$ localized by inverting the multiplicative set generated by the elements of $\{A^n-1\mid n\in\bz^+\}$.
If we specify $n$ points on $\partial F$, where $n$ could be $0$,
the linear skein $\cs(F,n)$ is  the module of formal linear sums over $\Lambda$ of $n$-endpoint tangle
diagrams in $F$ with their end points identified to the specified points on $\partial F$, quotiented by the submodule generated by the skein relations:
\begin{enumerate}
\item $L \cup U= \delta L$, where $U$ is a trivial knot, $L$ is a link in $F$ and $\delta=(-A^{-2}-A^2)$;
\item $\begin{minipage}{0.2in}\includegraphics[width=0.2in]{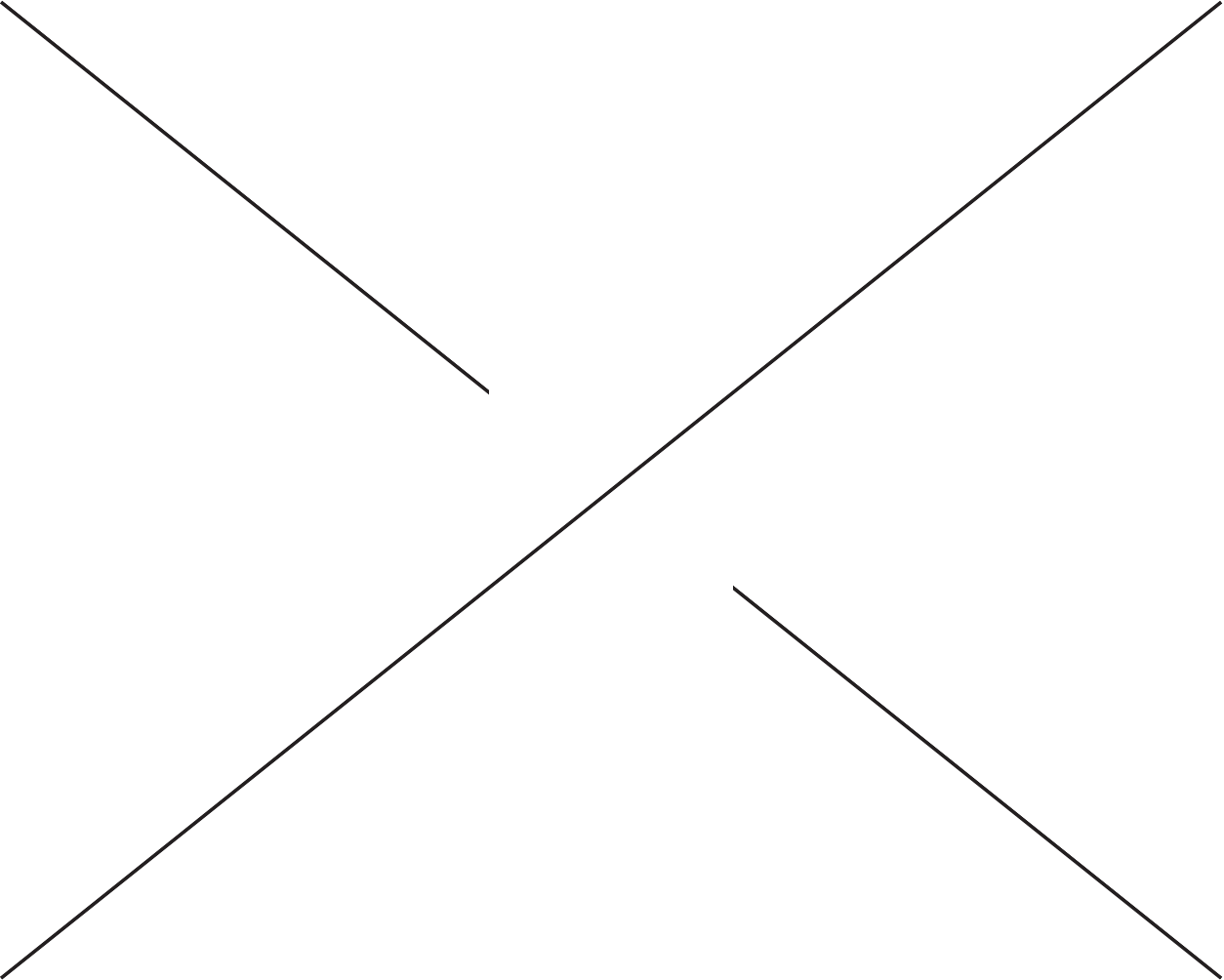}\end{minipage}=\ A^{-1}\  \begin{minipage}{0.2in}\includegraphics[width=0.2in]{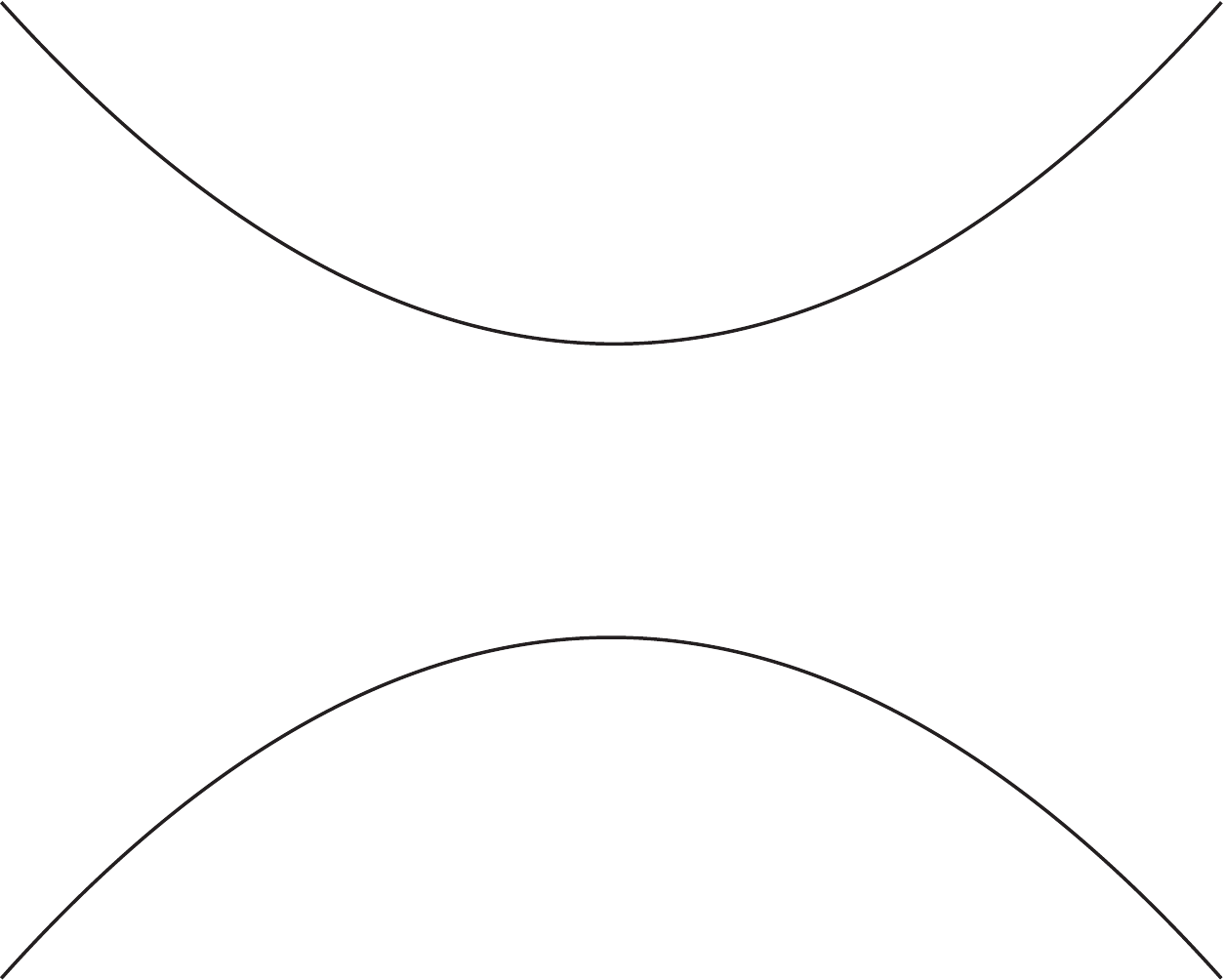}\end{minipage}\ +\ A\ 	 \begin{minipage}{0.2in}\includegraphics[width=0.2in]{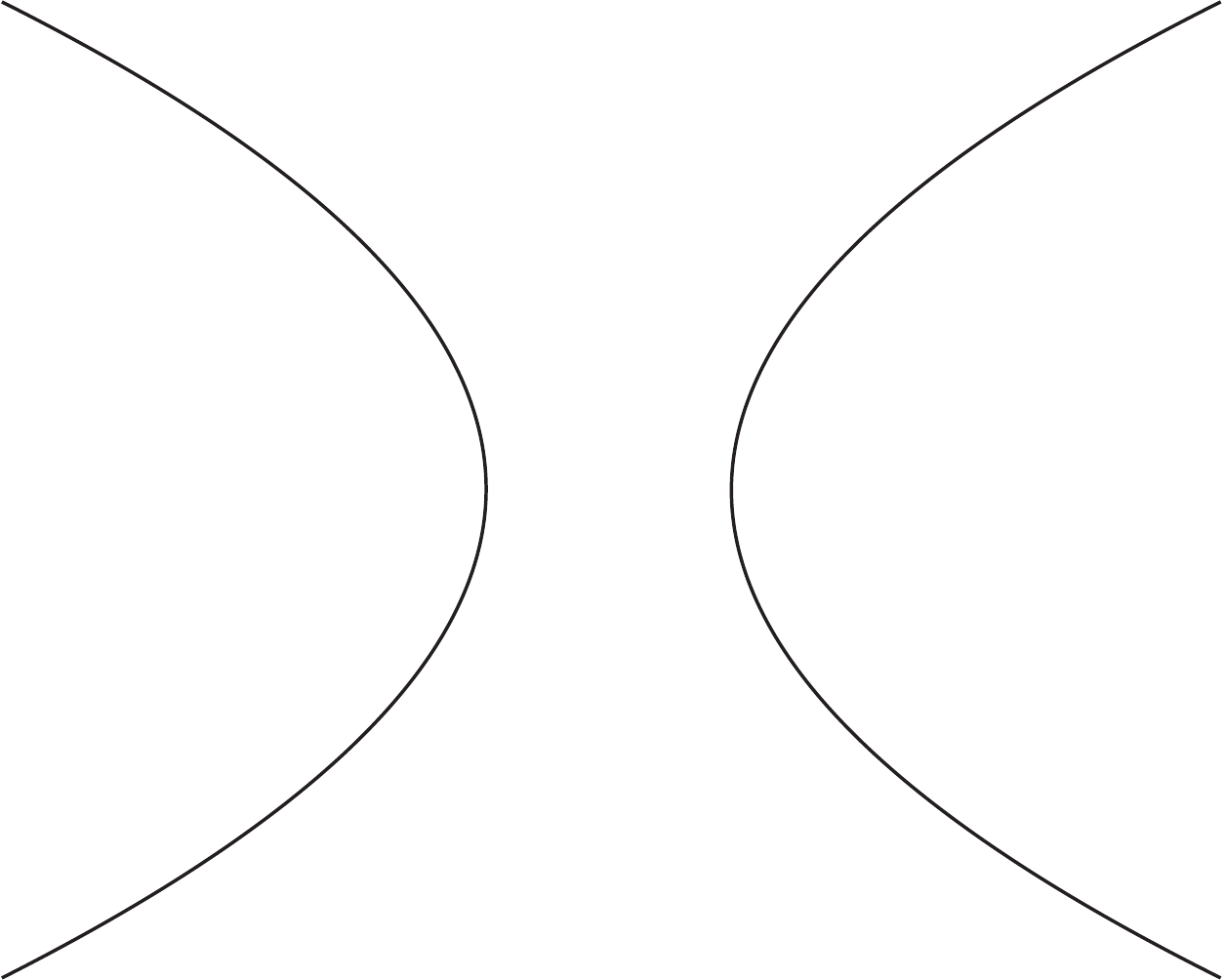}\end{minipage}$ .
\end{enumerate}
\end{de}

\begin{de}
If we take $F=I\times I$ with $n$ points on $I\times{0}$ and $n$ points on $I\times{1}$,
then the skein module obtained is called the $n$th Temperley-Lieb algebra $TL_n$.
There is a natural multiplication on $TL_n$,
it is done by putting the first element on top of the second,
see Figure \ref{multiplication}.
\begin{figure}[h]

\includegraphics[width=0.8in, height=1in]{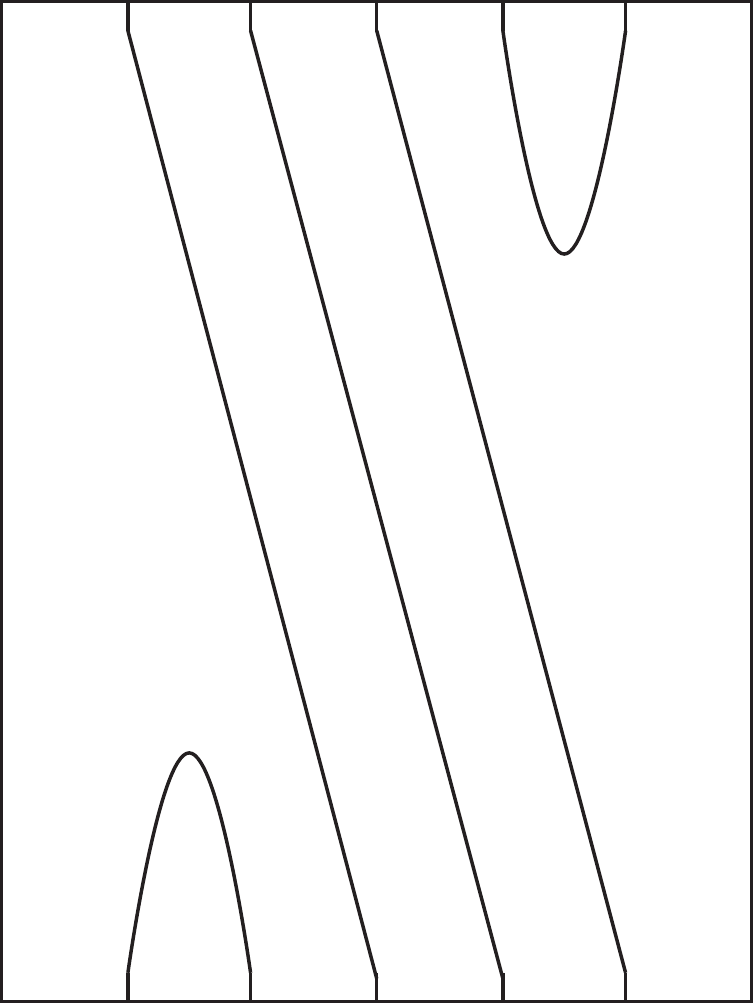}
\includegraphics[width=0.2in, height=1in]{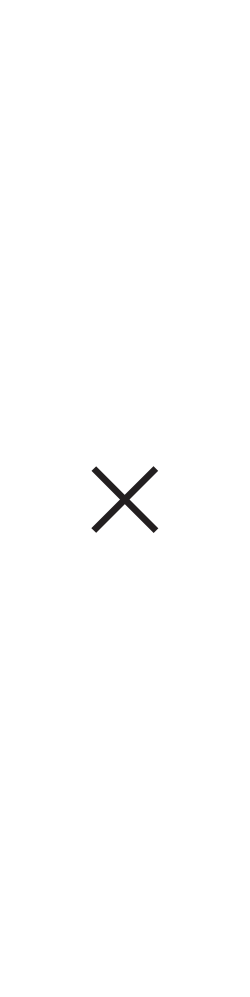}
\includegraphics[width=0.8in, height=1in]{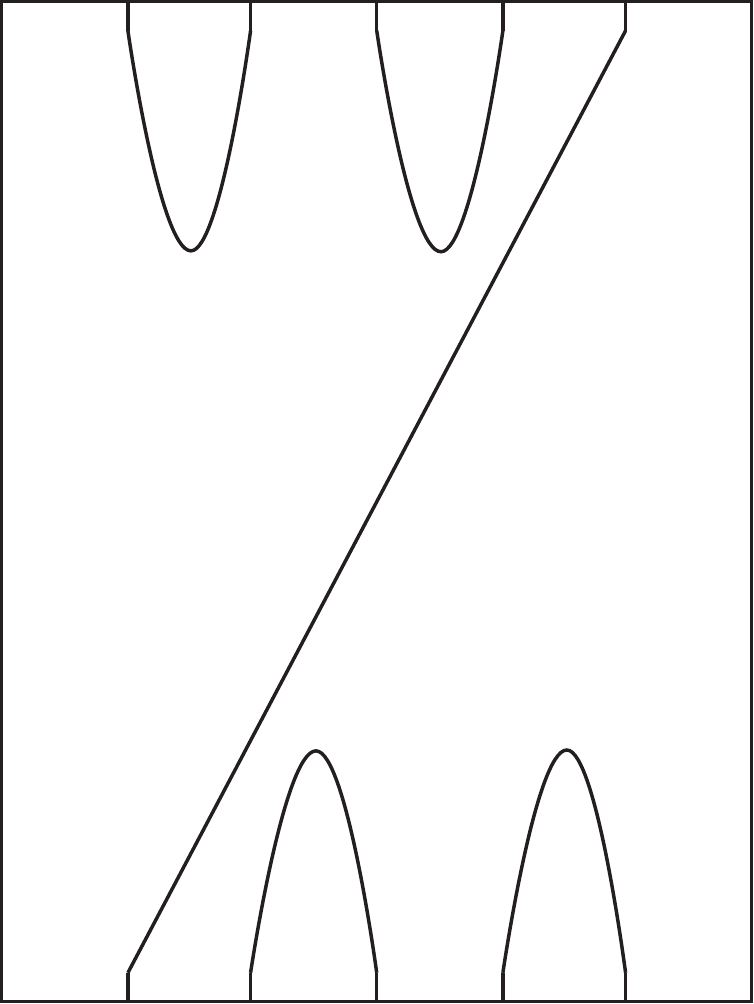}
\includegraphics[width=0.2in, height=1in]{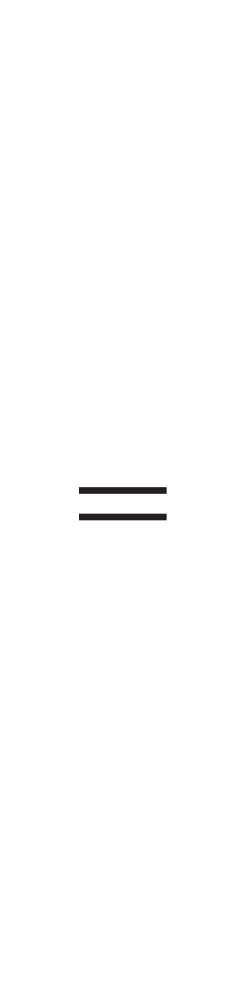}
\includegraphics[width=0.8in, height=1in]{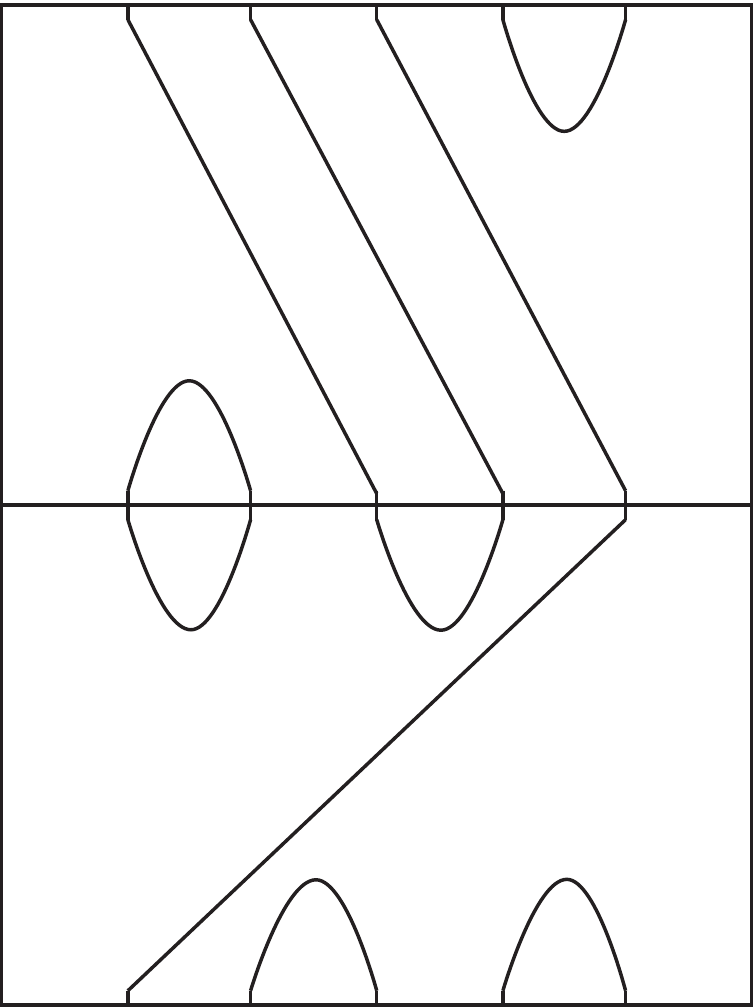}
\includegraphics[width=0.2in, height=1in]{vequal.pdf}
\includegraphics[width=0.2in, height=1in]{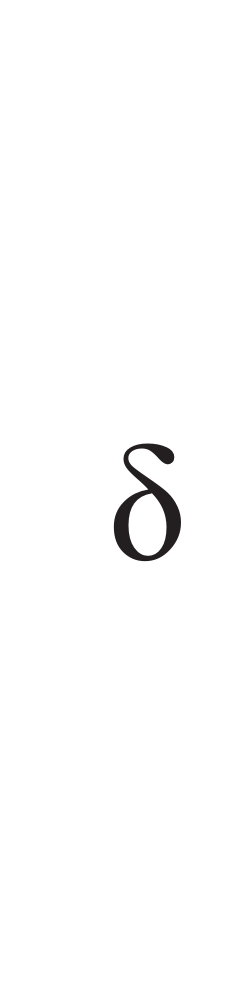}
\includegraphics[width=0.8in, height=1in]{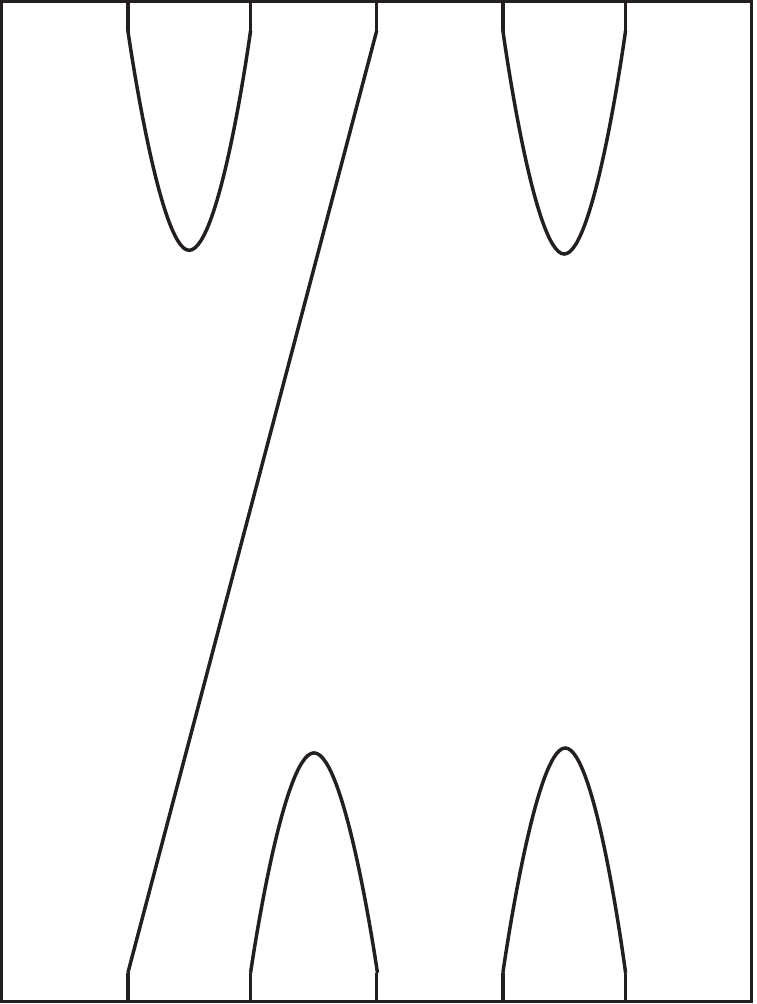}
\caption{Multiplication on Temperley-Lieb algebra}
\label{multiplication}
\end{figure}
\end{de}

There is a well known basis for $TL_n$,
which consists of crossing free figures.
We denote this basis by $\cb_n$.
Moreover, elements $\{1,e_i\}_{i=1,\ldots,n-1}$ are generators for $TL_n$ as an algebra,
where $1$ and $e_i$'s as presented in Figure \ref{generator}.
We put an integer $j$ beside a string to denote $j$ parallel strings.

\begin{figure}[h]

\[ 1 =
\begin{array}{c}
\includegraphics[width=0.8in, height=1in]{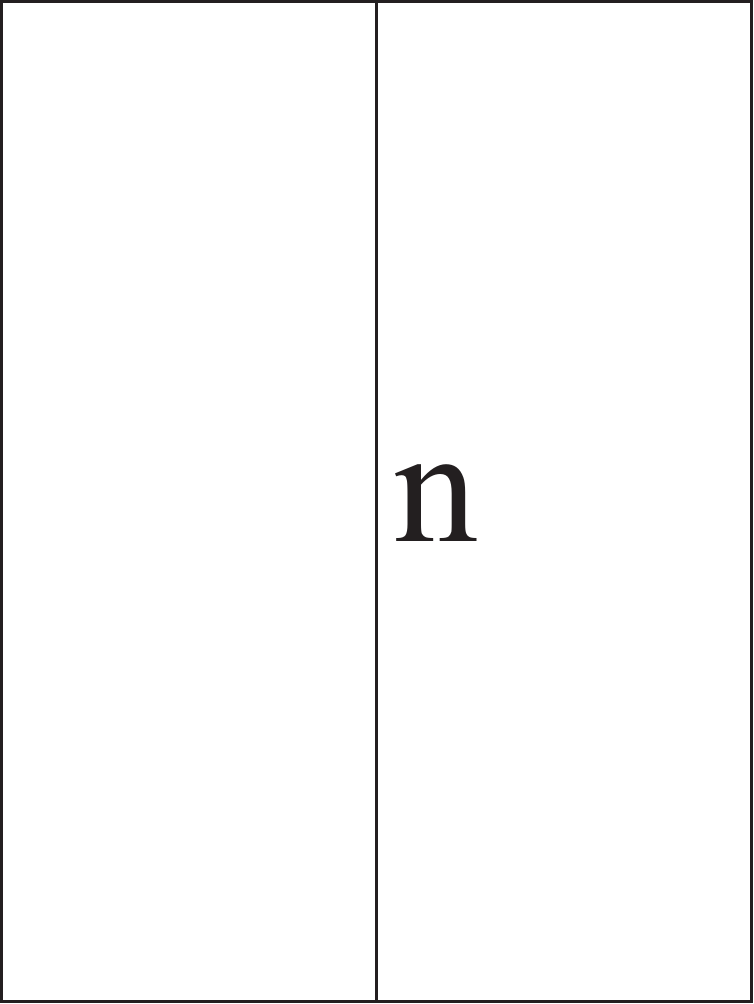}
\end{array}\hskip 0.5in
   e_i =
\begin{array}{c}
\includegraphics[width=0.8in, height=1in]{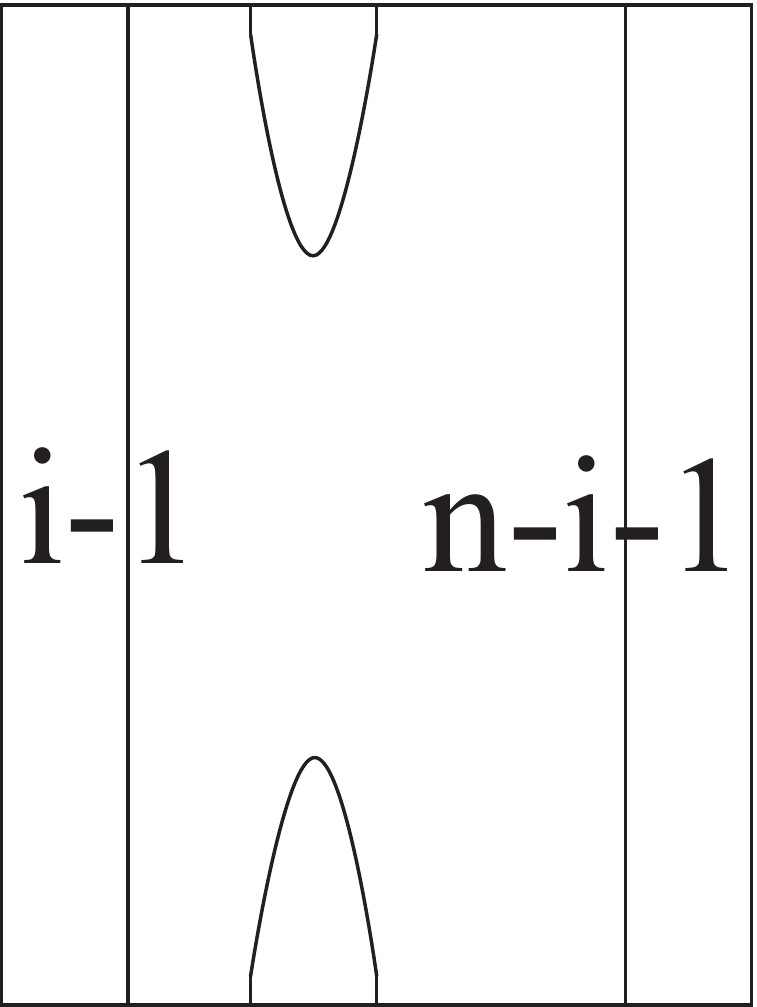}
\end{array}
\]
\caption{Generators for $TL_n$, where $i$ means the $i$th point}
\label{generator}
\end{figure}

A special element in $TL_n$, called the $n$th Jones-Wenzl idempotent, plays the key role in constructing 3-manifolds invariants, which are introduced by Wenzl in \cite{Wen}. Here, we follow the definition of Lickorish's in \cite{L2}, which characterizes Jones-Wenzl idempotents with the following property:

\begin{prop}
\label{JWI}
There is a unique element $f_n\in TL_n$, called the $n$th Jones-Wenzl
idempotent, such that
\begin{enumerate}
\item $f_ne_i=0=e_if_n$ for $1\leq i\leq n-1$;
\item $(f_n-1)$ belongs to the subalgebra generated by $e_1,\ldots,e_{n-1}$;
\item $f_nf_n=f_n$.
\end{enumerate}
\end{prop}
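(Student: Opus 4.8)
The plan is to establish existence and uniqueness separately, disposing of uniqueness first since it follows almost immediately from properties (1) and (2). Suppose $f_n$ and $f_n'$ both satisfy the three conditions. Property (2) lets me write $f_n = 1 + x$ and $f_n' = 1 + x'$, where $x$ and $x'$ lie in the (non-unital) subalgebra generated by $e_1,\ldots,e_{n-1}$, so that each is a $\Lambda$-linear combination of products $e_{i_1}\cdots e_{i_k}$ with $k\geq 1$. Using $f_n e_i = 0$ from property (1) I get $f_n x' = 0$, hence $f_n f_n' = f_n(1+x') = f_n$. Symmetrically, $e_i f_n' = 0$ gives $x f_n' = 0$, hence $f_n f_n' = (1+x)f_n' = f_n'$. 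Comparing the two evaluations yields $f_n = f_n'$, so the idempotent is unique; note that idempotency is not even needed for this half.

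For existence I would construct $f_n$ by induction on $n$ via Wenzl's recursion. The base case is $f_1 = 1 \in TL_1$. Assuming $f_{n-1}$ has been built in $TL_{n-1}$ and satisfies all three properties, I embed it into $TL_n$ by adjoining one straight strand on the right and set
\[
f_n = f_{n-1} - \mu_n\, f_{n-1} e_{n-1} f_{n-1},
\]
for a scalar $\mu_n \in \Lambda$ to be pinned down. Property (2) is then immediate: by induction $f_{n-1}-1$ already lies in the subalgebra generated by $e_1,\ldots,e_{n-2}$, while $f_{n-1}e_{n-1}f_{n-1}$ manifestly lies in the subalgebra generated by all the $e_i$, so their combination $f_n - 1$ does too.

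The heart of the verification is property (1). For $i \leq n-2$ the identity $f_n e_i = 0$ follows at once from $f_{n-1}e_i = 0$. For $i = n-1$ I would invoke the partial-closure identity $e_{n-1}f_{n-1}e_{n-1} = c_n\, e_{n-1}$, where $c_n$ is the scalar produced by closing off the last strand of $f_{n-1}$; a diagrammatic computation identifies $c_n$ as the ratio $\Delta_{n-1}/\Delta_{n-2}$ of the values $\Delta_m$ of the closed idempotents (the Chebyshev polynomials evaluated at $\delta$). This gives $f_n e_{n-1} = (1 - \mu_n c_n) f_{n-1} e_{n-1}$, so taking $\mu_n = c_n^{-1} = \Delta_{n-2}/\Delta_{n-1}$ forces $f_n e_{n-1} = 0$, and the mirror-image computation yields $e_i f_n = 0$. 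Property (3) then drops out: from $f_{n-1}^2 = f_{n-1}$ one checks $f_n f_{n-1} = f_n$, whence $f_n^2 = f_n - \mu_n f_n e_{n-1} f_{n-1} = f_n$ using the property (1) just established.

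The one genuine obstacle is ensuring $\mu_n$ is well-defined, i.e. that $\Delta_{n-1}$ is invertible in the coefficient ring. This is precisely why $\Lambda$ is taken to be $\bz[A,A^{-1}]$ localized at the multiplicative set generated by the $A^n - 1$: the closed value $\Delta_{n-1}$ equals, up to sign, the quantum integer $[n] = (A^{2n}-A^{-2n})/(A^2-A^{-2})$, whose numerator and denominator both factor through the inverted elements, so $\Delta_{n-1}$ becomes a unit after localization. Hence the recursion never breaks down and $f_n$ exists for every $n$, completing the proof.
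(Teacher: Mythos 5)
Your argument is, in substance, the standard Wenzl--Lickorish proof, and it is the right one to give here: the paper itself supplies no proof of this proposition (it quotes the characterization from Lickorish \cite{L2}, credits Wenzl \cite{Wen}, and separately displays Wenzl's recursion as a figure), so your uniqueness-plus-recursion argument is precisely the proof the paper is deferring to. The uniqueness half is clean and complete, and your closing observation is the key structural point: in $\Lambda$, which is $\bz[A,A^{-1}]$ with every $A^m-1$ inverted, each $\Delta_m=(-1)^mA^{-2m}(A^{4m+4}-1)/(A^4-1)$ is a ratio of units and hence a unit, so the coefficient $\mu_n=\Delta_{n-2}/\Delta_{n-1}$ always exists and the recursion never breaks down.

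One intermediate step is stated incorrectly, although it is repairable with a tool you already use. The partial closure of $f_{n-1}$ over its last strand is not a scalar, so the identity $e_{n-1}f_{n-1}e_{n-1}=c_n\,e_{n-1}$ is false once $n\geq 4$. The correct statement is $e_{n-1}f_{n-1}e_{n-1}=\frac{\Delta_{n-1}}{\Delta_{n-2}}\,f_{n-2}\,e_{n-1}$, where $f_{n-2}$ sits on the first $n-2$ strands: the partial closure of $f_{n-1}$ is an element of $TL_{n-2}$ that annihilates $e_1,\ldots,e_{n-3}$ and equals $1$ plus an element of the algebra they generate, hence by uniqueness (inductively available) it is a scalar multiple of $f_{n-2}$, and the scalar $\Delta_{n-1}/\Delta_{n-2}$ is read off by taking the full closure. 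Your displayed consequence $f_ne_{n-1}=(1-\mu_nc_n)f_{n-1}e_{n-1}$ nevertheless remains correct, because the extra factor is absorbed on the left: $f_{n-1}f_{n-2}=f_{n-1}$, by exactly the absorption argument you used in the uniqueness half. With that one identity restated, the proof is complete and matches the cited sources.
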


\begin{rem}
If we close $f_n$ up and evaluate the diagram by Kauffman bracket,
the resulting value is denoted by $\Delta_n$.
It is well known that
\begin{equation}
\Delta_n=(-1)^n\frac{A^{2(n+1)}-A^{-2(n+1)}}{A^2-A^{-2}}.
\notag
\end{equation}
\end{rem}

We can also describe those Jones-Wenzl idempotents by recursive formula:
\begin{figure}[h]
\[
f_1 =
\begin{array}{c}
\includegraphics[width=1in, height=0.8in]{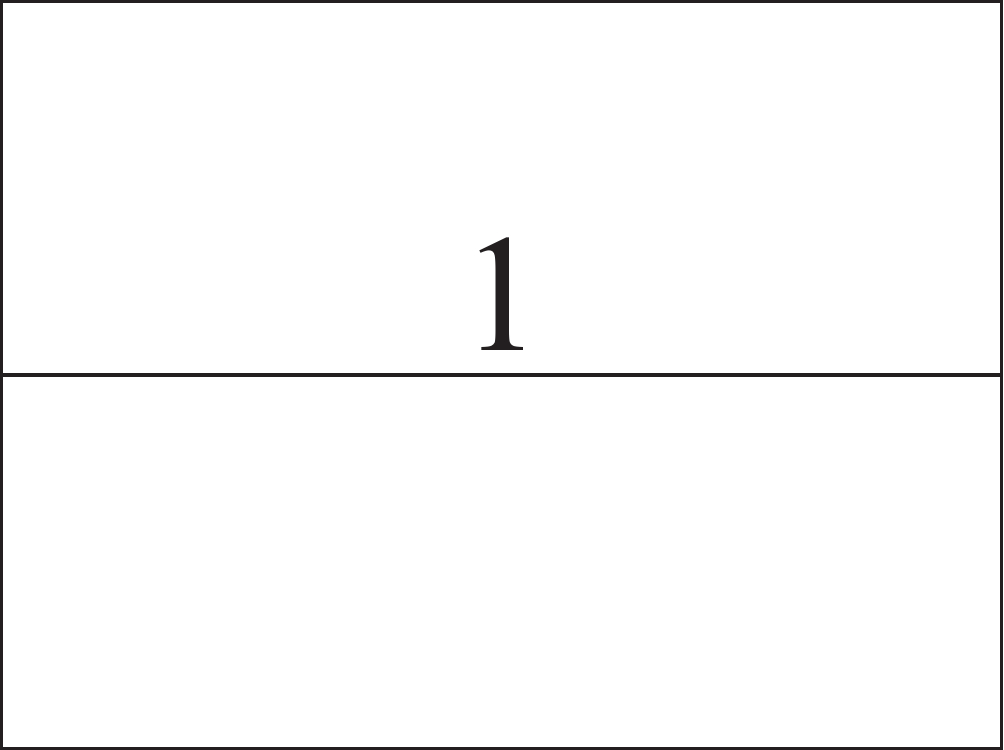}
\end{array}
\]
\end{figure}
\begin{figure}[h]
\includegraphics[width=1in, height=0.8in]{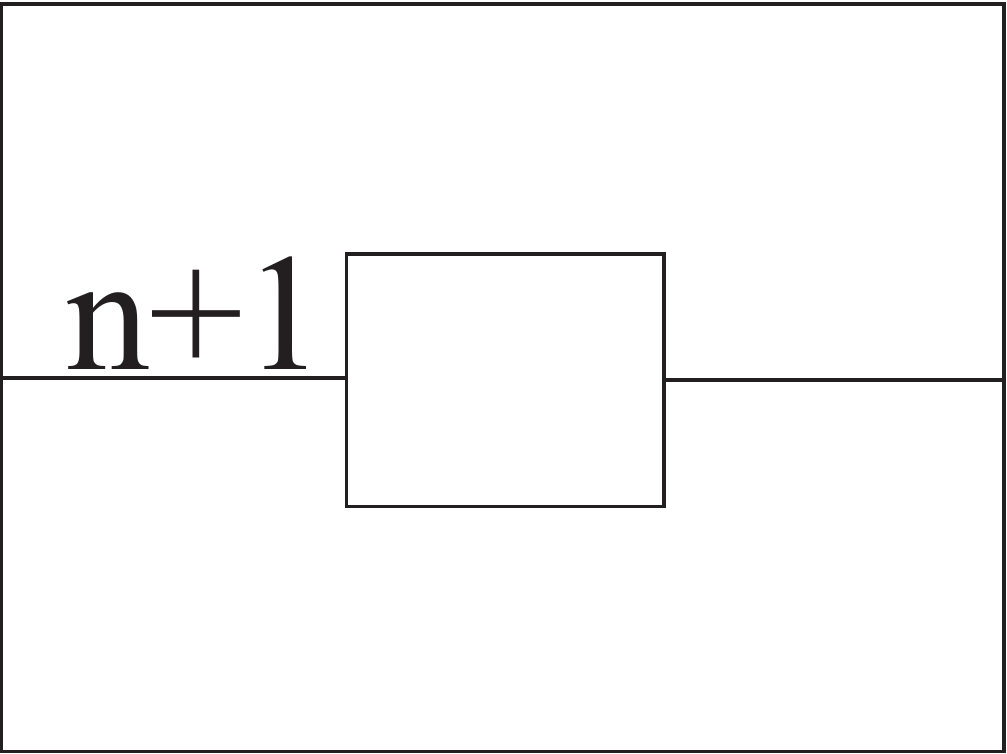}
\includegraphics[width=0.2in, height=0.8in]{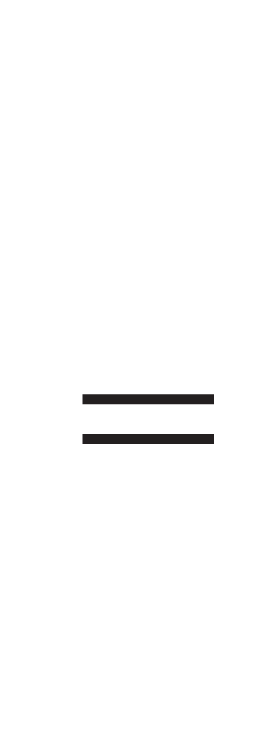}
\includegraphics[width=1in, height=0.8in]{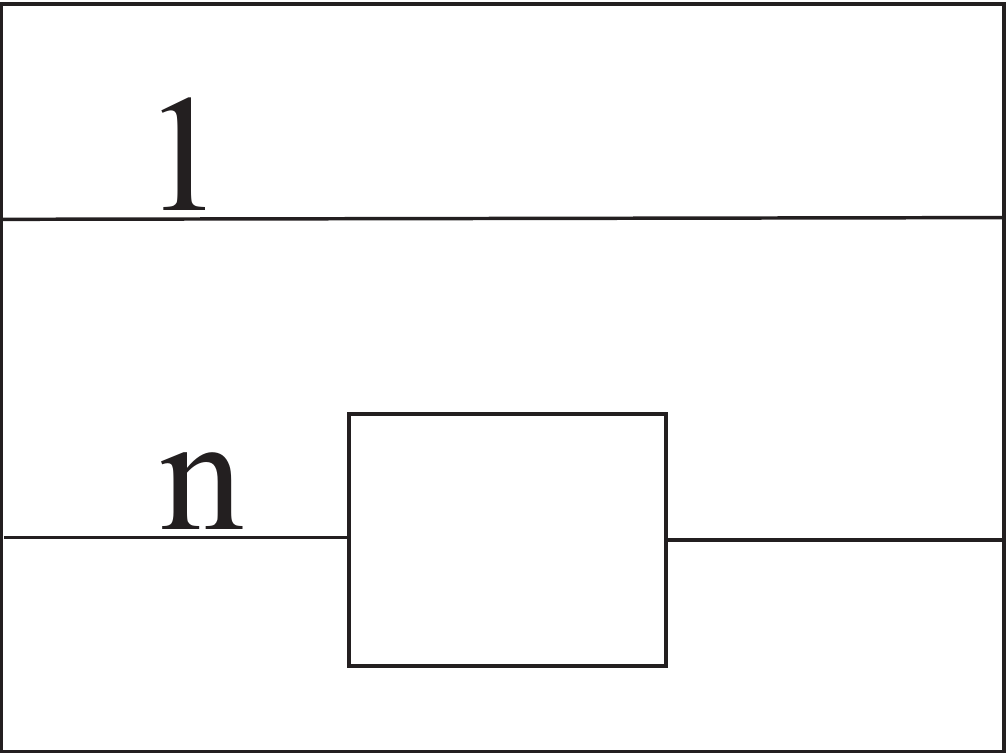}
\includegraphics[width=0.2in, height=0.8in]{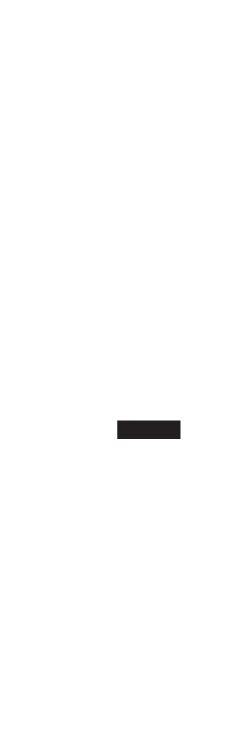}
\includegraphics[width=0.3in, height=0.8in]{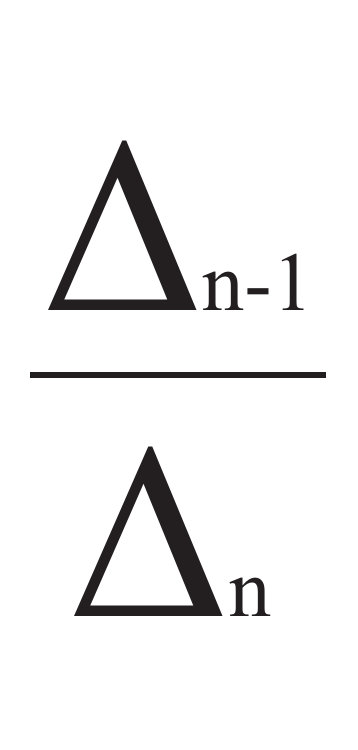}
\includegraphics[width=1in, height=0.8in]{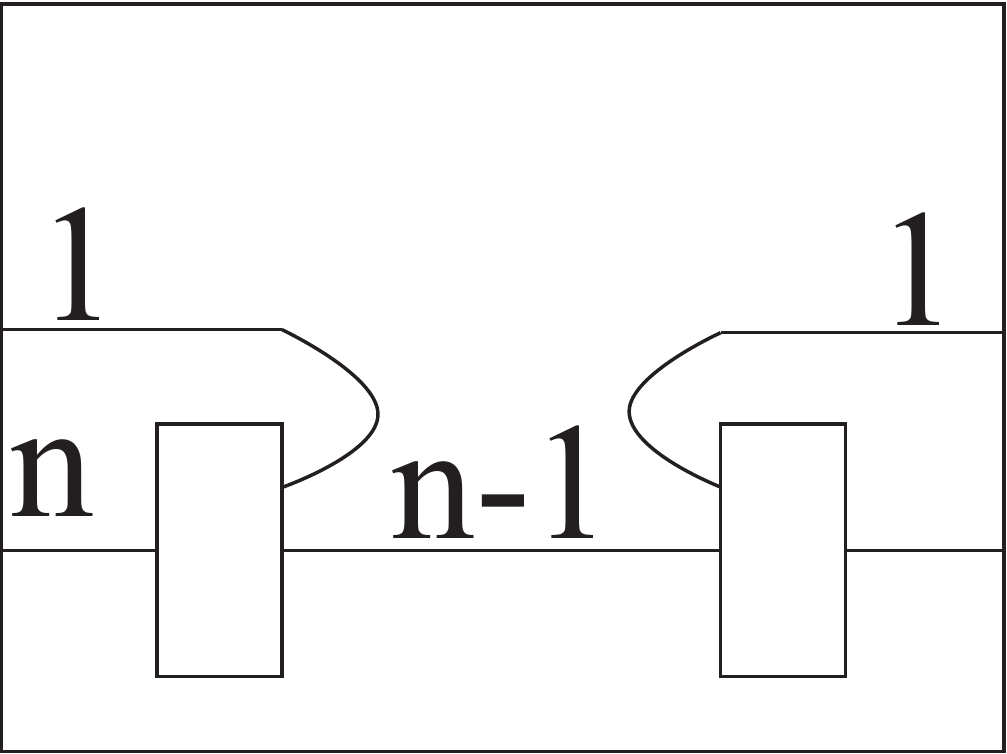}
\label{idempotent}
\caption{The box inside denotes the corresponding idempotent}
\label{recursiveformula}
\end{figure}

Now we introduce colored skein modules.

\begin{de}
Coloring a point with $n$ means assigning a non-negative integer $n$ to the point.
\end{de}

Now we define colored skein module by following Lickorish's idea \cite[Page 151]{L2}.
\begin{de}
Suppose we have $a_1+...+a_n$ points on $\partial F$.
We partition them into $n$ sets of $a_1,...,a_n$ consecutive points and put corresponding Jones-Wenzl idempotent just outside each diagram for each grouped points.
All this kind of elements form a subspace of $\cs(F,a_1+...+a_n)$.
We call this subspace the skein module $\cs(F;a_1,...,a_n)$ of $F$ with $n$ points on $\partial F$ colored by $a_1,...,a_n$.
\end{de}

Since $f_1$ is the identity of $TL_1$,
we can consider the normal skein module as the colored skein module of $\cs(F,n)$ with coloring $1, \ldots, 1$, i.e.
\begin{equation}
\cs(F,n)=\cs(F;1,...,1).
\notag
\end{equation}
Therefore, colored skein modules  generalize the definition of normal skein modules.

\begin{note}
We denote by $\cs(I\times I,n,h)$ the colored skein module in $\cs(I\times I,n+1)$ with $n$ points colored by $1$ and $1$ point colored by $h$.
We put the point colored by $h$ on $I\times\{1\}$ and the $n$ points colored by $1$ on $I\times \{0\}$.
Here $I=[0,1]$.
\end{note}

\begin{prop}
\label{generating}
There is a natural generating set $\cb_n^h$ for $\cs(I\times I,n,h)$ consisting of crossing free diagrams with no arc connecting two of the $h$ points in $I\times \{1\}$.
\end{prop}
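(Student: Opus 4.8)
The plan is to produce $\cb_n^h$ by pruning the standard crossing-free basis of the ambient uncolored module. Recall the well-known fact that $\cs(I\times I,n+h)$, the skein of a disk with $n+h$ marked boundary points, has a basis consisting of crossing-free diagrams, i.e. non-crossing matchings of the $n$ points on $I\times\{0\}$ together with the $h$ points on $I\times\{1\}$ (all closed loops having already been removed by the relation $L\cup U=\delta L$). By the definition of the colored module, $\cs(I\times I,n,h)$ is exactly the subspace obtained by inserting the Jones-Wenzl idempotent $f_h$ just inside the $h$ points on $I\times\{1\}$; equivalently it is the image of the linear map $D\mapsto f_h\cdot D$. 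Hence $\{\,f_h\cdot D : D\text{ a crossing-free matching of the }n+h\text{ points}\,\}$ is automatically a generating set, and the whole content of the proposition is that every $D$ carrying an arc between two of the $h$ top points may be discarded.

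First I would establish the key vanishing statement: if a crossing-free $D$ contains a cap joining two \emph{adjacent} top points, then $f_h\cdot D=0$. This is the standard fact that a turnback on a Jones-Wenzl idempotent annihilates it, and it is immediate from property (1) of Proposition \ref{JWI}, since such a cap presents one of the generators $e_i$ against $f_h$, and $f_he_i=e_if_h=0$.

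The step I expect to be the main (if mild) obstacle is reducing an arbitrary top-to-top arc to an adjacent one. Suppose $D$ joins top points in positions $i<j$ by a crossing-free arc. Since $D$ has no crossings, the top points strictly between $i$ and $j$ cannot connect to anything lying outside the disk region cut off by this arc and the top edge, so they must be matched among themselves; by nesting, the innermost of these arcs joins two \emph{adjacent} top points. That innermost cap triggers the vanishing of the previous paragraph, so $f_h\cdot D=0$ for \emph{every} $D$ with a top-to-top arc. (A parity check is consistent here: a top-to-top arc forces $j-i$ to be odd, so the enclosed top points can indeed be matched among themselves.)

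Finally I would assemble the pieces. Every generator $f_h\cdot D$ with a top-to-top arc equals $0$, so the surviving generators, namely those $D$ with no arc joining two of the $h$ top points, already span $\cs(I\times I,n,h)$; this is precisely $\cb_n^h$. I would close by recording the resulting shape of a surviving diagram: each of the $h$ top points is then forced to connect to a bottom point, and the remaining $n-h$ bottom points form a non-crossing matching, which also makes transparent the constraints $n\ge h$ and $n\equiv h\pmod 2$ under which $\cb_n^h$ is nonempty.
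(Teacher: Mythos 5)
Your proposal is correct and follows essentially the same route as the paper: reduce to crossing-free diagrams via the skein relations, then kill any diagram with an arc joining two of the $h$ colored points using the Jones--Wenzl annihilation property of Proposition \ref{JWI}. Your innermost-arc argument simply fills in a detail the paper leaves implicit when it asserts that such diagrams vanish.
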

\begin{proof}
For a crossing in a diagram, we just use the second skein relation to smooth it.
Then we can smooth all crossings in every diagram.
That means every diagram can be written as a linear sum of crossing free diagrams.
Therefore, we can get a generating set consisting of crossing free diagrams.
Moreover, by Proposition \ref{JWI}, a crossing free diagram with a segment connecting two of the set of $h$ points is $0$ in $\cs(I\times I,n,h)$.
Thus, the result follows.
\end{proof}

\begin{rem}
We will see later that actually $\cb_n^h$ is a basis of $\cs(I\times I,n,h)$.
So we will call $\cb_n^h$ the natural basis.
\end{rem}

We can also generalize the natural bilinear form on $TL_n$ to a bilinear form on $\cs(I\times I,n,h)$ as follows:
\begin{prop}
\label{bilinearform}
Suppose $A=\begin{minipage}{0.5in}\includegraphics[width=0.5in]{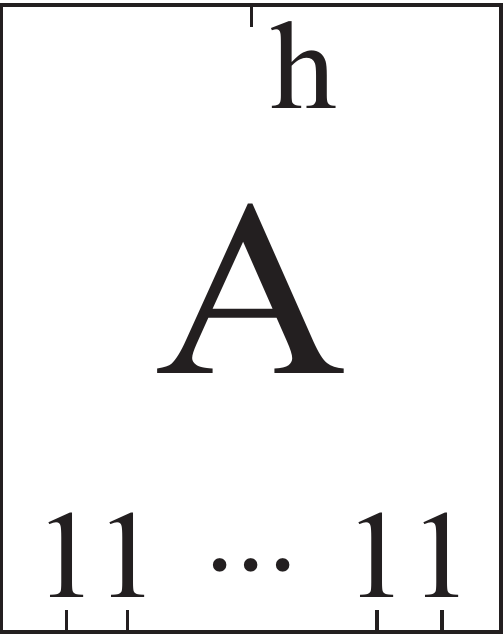}\end{minipage}$ and $B=\begin{minipage}{0.5in}\includegraphics[width=0.5in]{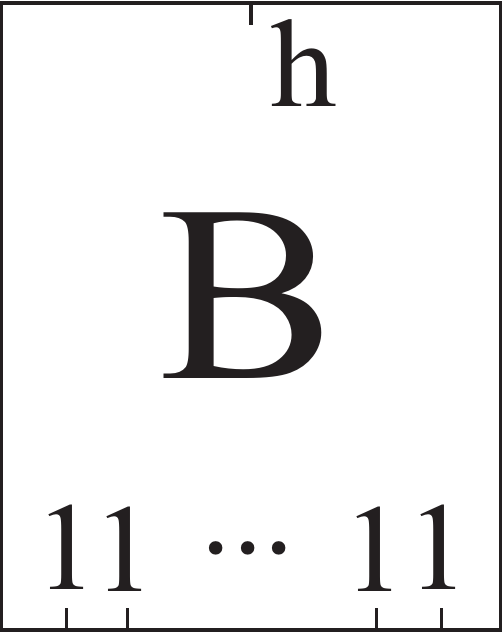}\end{minipage}$ are two elements in $\cs(I\times I,n,h)$.
We define a function:
\begin{equation}
G:\cs(I\times I,n,h)\times\cs(I\times I,n,h)\rightarrow\Lambda
\notag
\end{equation}
as
\begin{equation}
G(A,B)=\begin{minipage}{1.2in}\includegraphics[width=1.2in,height=0.8in]{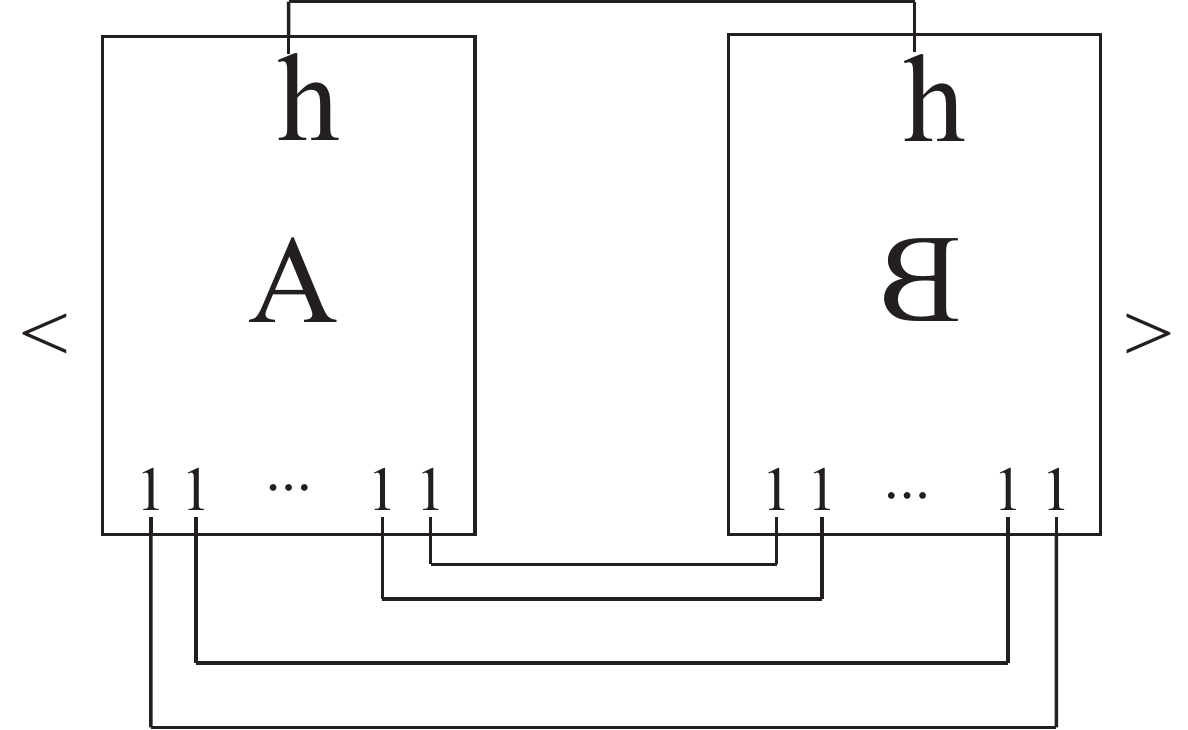}\end{minipage}\ .
\notag
\end{equation}
where $<>$ means we evaluate the diagram in the plane by Kauffman bracket.
This is a bilinear form on $\cs(I\times I,n,h)$.
\end{prop}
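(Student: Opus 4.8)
The plan is to establish the two defining properties of a bilinear form in turn: that $G$ is well defined on the quotient module $\cs(I\times I,n,h)$, and that it is $\Lambda$-linear in each variable. The construction underlying $G$ is a \emph{gluing} followed by an \emph{evaluation}. Given representative diagrams for $A$ and $B$, one glues them along their common boundary pattern --- joining the $n$ points colored $1$ of $A$ to those of $B$, and the point colored $h$ of $A$ to that of $B$ through the idempotent $f_h$ (one of the two diagrams being reflected so that the endpoints match) --- thereby producing a single closed diagram in the plane. The Kauffman bracket $\langle\,\rangle$ then sends this closed diagram to a scalar in $\Lambda$. I would first record precisely that this gluing map carries a diagram in $\cs(I\times I,n,h)$ to a closed tangle diagram in $\br^2$, so that the composite $G$ indeed takes values in $\Lambda$.

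Second, I would verify bilinearity. Fix $B$ and consider the map $A\mapsto(\text{glue }A\text{ with }B)$ at the level of formal $\Lambda$-sums of diagrams. This map is plainly $\Lambda$-linear, since gluing only modifies $A$ near its boundary and distributes over $\Lambda$-linear combinations without touching the coefficients. The Kauffman bracket of a closed diagram is in turn $\Lambda$-linear in the input diagram, directly from the defining skein relations. Composing these two $\Lambda$-linear maps yields linearity of $G$ in its first argument; because the gluing construction treats the two slots in the same way, the identical argument with the roles of $A$ and $B$ interchanged gives linearity in the second argument.

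The substantive point, and the step requiring care, is well-definedness: $G$ must not depend on the diagram representatives chosen for the classes $A,B$. Here I would argue that the relations defining $\cs(I\times I,n,h)$ are exactly the local relations that the Kauffman bracket respects. If $A$ and $A'$ differ by one of the two skein relations, by the circle relation $L\cup U=\delta L$, or by a relation imposed by a Jones-Wenzl idempotent (Proposition \ref{JWI}), then the two glued closed diagrams differ only inside a small disk by that same local move; since the bracket is computed by repeatedly applying precisely these moves, it assigns both diagrams the same value. In particular, a crossing smoothed in $A$ is a crossing smoothed in the glued picture, a trivial circle removed from $A$ contributes the same factor $\delta$, and the idempotent relations $f_he_i=0$ and $f_hf_h=f_h$ survive verbatim because $f_h$ is carried unchanged into the closed diagram. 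My expectation is that the only genuine obstacle lies in this bookkeeping: one must confirm that the gluing sends each local relation on the square --- including the idempotent relations at the colored points --- to the corresponding local relation inside the closed diagram, so that no representative-dependence survives the bracket evaluation. Once that compatibility is in place, the pairing factors through the quotient in each slot, and $G$ is the asserted $\Lambda$-bilinear form.
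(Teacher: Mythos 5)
Your proposal is correct, and it is precisely the ``standard skein theoretic argument'' that the paper invokes without spelling out: $G$ takes values in $\Lambda$ because a closed diagram in the plane evaluates to a scalar via the Kauffman bracket, bilinearity follows since gluing distributes over formal $\Lambda$-combinations, and well-definedness holds because gluing carries each defining relation into a local relation that the bracket respects. The only small correction is that in this paper $\cs(I\times I,n,h)$ is defined as a \emph{subspace} of $\cs(I\times I,n+h)$ (spanned by diagrams with $f_h$ inserted), not as a quotient by idempotent relations, so your well-definedness check only needs to handle the two ordinary skein relations, and the idempotent bookkeeping you describe (using $f_hf_h=f_h$ at the glued $h$-colored point) is automatic rather than an extra relation to verify.
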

\begin{proof}
This is a standard skein theoretic argument.
\end{proof}

Suppose the matrix of the bilinear form $G$ with respect to the natural basis $\cb_n^h$ is $B$.
We will calculate the determinant of $B$.

\begin{thm}\label{main} We have
\begin{equation}
\det(B)=\Delta_h^{|\cd_n^h|}\prod_{k}(\frac{\Delta_k}{\Delta_{k-1}})^{\alpha^k_{(n,h)}},
\notag
\end{equation}
where $\alpha^k_{(n,h)}=\binom{n}{\frac{n+h+2k-2s}{2}}-\binom{n}{\frac{n+h+2k+2}{2}}$ for $s=\min\{k-1,h\}$.
\end{thm}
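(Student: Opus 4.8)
The plan is to replace the natural basis $\cb_n^h$ by an orthogonal basis $\cd_n^h$ for which the matrix of $G$ is diagonal, following the idea of \cite{BHMV}. I would build each element of $\cd_n^h$ as a \emph{fusion tree}: reading the $n$ points colored $1$ from left to right, attach them to a spine one at a time, inserting the Jones--Wenzl idempotent $f_{c_i}$ after the $i$th point. By Proposition \ref{JWI} such a diagram is nonzero only when $(c_0,c_1,\dots,c_n)$ is \emph{admissible}, i.e. $c_0=0$, $c_n=h$, every $c_i\geq 0$, and $|c_i-c_{i-1}|=1$. These admissible sequences are exactly the generalized Dyck paths from $(0,0)$ to $(n,h)$, which supplies the promised correspondence and shows that $|\cd_n^h|$ is the number of such paths. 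Expanding each idempotent as $f_{c_i}=1+(\text{terms in the }e_j)$ writes each fusion-tree element as its associated crossing-free diagram in $\cb_n^h$ plus strictly ``smaller'' diagrams; with a compatible ordering of the two bases (which have the same cardinality, both being the ballot numbers) this realizes the change-of-basis matrix as upper triangular with $1$'s on the diagonal, so $\det(B)$ is unchanged when computed in $\cd_n^h$.

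Next I would establish orthogonality. If $d,d'\in\cd_n^h$ correspond to distinct paths, then in $G(d,d')$ — obtained by gluing $d$ to the reflection of $d'$ along the $n$ color-$1$ points and joining the two color-$h$ points — there is a first index at which the spine colors disagree, forcing an idempotent $f_{c_i}$ against a strand carrying a different color. By Proposition \ref{JWI} (equivalently, the vanishing of a recoupling bubble with mismatched colors) the whole diagram is $0$, so $G(d,d')=0$. Hence the Gram matrix is diagonal in $\cd_n^h$ and $\det(B)=\prod_{d\in\cd_n^h}G(d,d)$.

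The third step evaluates the diagonal entries by recoupling. Doubling a fusion tree turns $G(d,d)$ into a closed theta chain whose vertices carry the triples $(c_{i-1},1,c_i)$ and whose internal edges carry $c_1,\dots,c_{n-1}$; the standard bubble-collapse then gives
\begin{equation}
G(d,d)=\frac{\prod_{i=1}^{n}\theta(c_{i-1},1,c_i)}{\prod_{i=1}^{n-1}\Delta_{c_i}}.
\notag
\end{equation}
Since $\theta(c_{i-1},1,c_i)=\Delta_{\max(c_{i-1},c_i)}$, the right-hand side telescopes: each up-step contributes $1$, each down-step from height $k$ to $k-1$ contributes $\Delta_k/\Delta_{k-1}$, and the closed top $h$-strand contributes $\Delta_h$, so that
\begin{equation}
G(d,d)=\Delta_h\prod_{k\geq 1}\left(\frac{\Delta_k}{\Delta_{k-1}}\right)^{a_k(d)},
\notag
\end{equation}
where $a_k(d)$ is the number of down-steps of the path $d$ from height $k$. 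Multiplying over all $d\in\cd_n^h$ collects the factor $\Delta_h^{|\cd_n^h|}$ and shows that the exponent of $\Delta_k/\Delta_{k-1}$ in $\det(B)$ is exactly $\alpha^k_{(n,h)}=\sum_{d}a_k(d)$, the total number of down-steps from height $k$ over all generalized Dyck paths from $(0,0)$ to $(n,h)$.

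All that then remains is the purely combinatorial identity $\alpha^k_{(n,h)}=\binom{n}{\frac{n+h+2k-2s}{2}}-\binom{n}{\frac{n+h+2k+2}{2}}$ with $s=\min\{k-1,h\}$, which I expect to be the main obstacle and which is the content of Section \ref{ACR}. Writing $\alpha^k_{(n,h)}$ as the convolution $\sum_i\big(\#\ \text{nonnegative paths }(0,0)\to(i-1,k)\big)\cdot\big(\#\ \text{nonnegative paths }(i,k-1)\to(n,h)\big)$, I would evaluate it either geometrically, by the reflection principle adapting Di Francesco's $h=0$ argument — the two binomials being the unconstrained count and its reflected correction, and the cutoff $s=\min\{k-1,h\}$ recording how the reflection across the floor interacts with the fixed endpoint height $h$ — or via generating functions, by extracting the relevant coefficient from a product of Catalan-type series. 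The delicate point is arranging the reflection and boundary bookkeeping so that exactly the stated arguments of the two binomial coefficients emerge.
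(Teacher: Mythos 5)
Your skein-theoretic reduction is exactly the paper's own proof. The fusion-tree basis you construct is the paper's basis $\cd_n^h$ of trivalent-vertex diagrams (Definition \ref{newbasis}); your color-mismatch orthogonality argument is Proposition \ref{basisnorm} and Corollary \ref{orthogonal}; your idempotent-expansion triangularity claim is Proposition \ref{transformmatrix} (the paper derives it instead from the pairings in Lemmas \ref{equal} and \ref{0}, but with the same conclusion $\det(B)=\det(D)$); and your identity $\theta(a,b,1)=\Delta_{\max(a,b)}$ with the resulting telescoping is Lemma \ref{simplify} and equation \eqref{eqDD1}. The correspondence with generalized Dyck paths and the reduction of $\det(B)$ to counting $k$-down steps over all paths in $\cd_{(n,h)}$ is likewise identical to Section \ref{PMR}. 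All of this part is correct.

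The genuine gap is the final combinatorial step: the closed form $\alpha^k_{(n,h)}=\binom{n}{\frac{n+h+2k-2s}{2}}-\binom{n}{\frac{n+h+2k+2}{2}}$ with $s=\min\{k-1,h\}$ is what the theorem actually asserts beyond the telescoping, and you only sketch it, explicitly deferring the ``delicate point.'' This is where the paper spends its longest section (Section \ref{ACR}), giving two complete arguments: a cut-reflect-glue bijection $\Theta,\Phi$ between $A^k_{(n,h)}$ and $\bigcup_{j=0}^{s}\cd_{(n,2k-2j+h)}$, and a generating-function computation using Chebyshev polynomials. Your proposed convolution $\alpha^k_{(n,h)}=\sum_i(\#\text{ prefixes to }(i-1,k))\cdot(\#\text{ suffixes from }(i,k-1))$ is a correct starting point and can be made to work --- it is essentially the paper's algebraic proof, since $\frac{d}{dq}C_{k,h}(x,q)\mid_{q=1}$ encodes precisely that convolution --- but collapsing the resulting sum of products of reflected binomials to exactly two terms is nontrivial: the paper needs the recursion of Proposition \ref{lem1}, the identity \eqref{binomequation}, and a case split $k>h$ versus $k\leq h$ (Theorems \ref{th2} and \ref{th3}), which is exactly where the cutoff $s=\min\{k-1,h\}$ comes from. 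Until that is carried out, your argument proves $\det(B)=\Delta_h^{|\cd_n^h|}\prod_k(\frac{\Delta_k}{\Delta_{k-1}})^{\alpha^k_{(n,h)}}$ only with $\alpha^k_{(n,h)}$ defined as a path count, not with the binomial formula stated in the theorem.
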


\section{A New Basis $\cd_n^h$}
\label{ANB}
We do not directly compute the determinant of $B$.
In fact, we construct a new basis $\cd_n^h$ of $\cs(I\times I,n,h)$,
which is orthogonal with respect to the bilinear form.
We then find the transform matrix between them.

Before we continue, let us set up some notations.

\begin{de}
Three non-negative integers $a, b, c$ are called admissible if they satisfy
\begin{enumerate}
\item a+b+c is even;
\item $|b-c|\leq a\leq b+c$.
\end{enumerate}
\end{de}

\begin{figure}[htp]
\includegraphics[width=0.8in, height=1in]{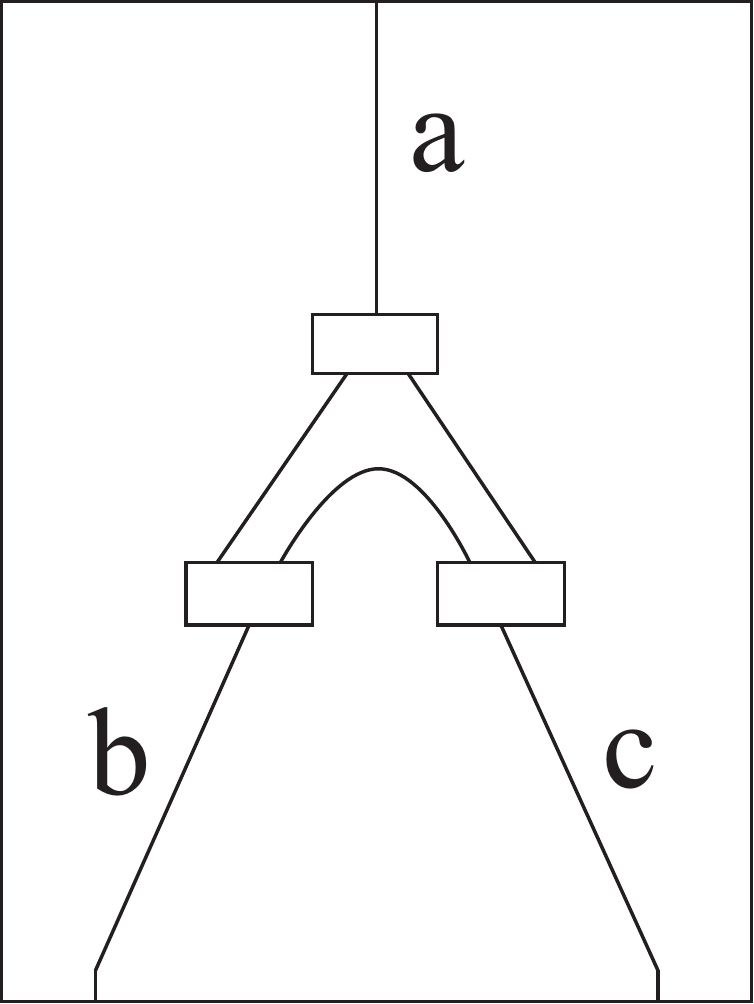}\hskip 0.5in
\includegraphics[width=0.8in, height=1in]{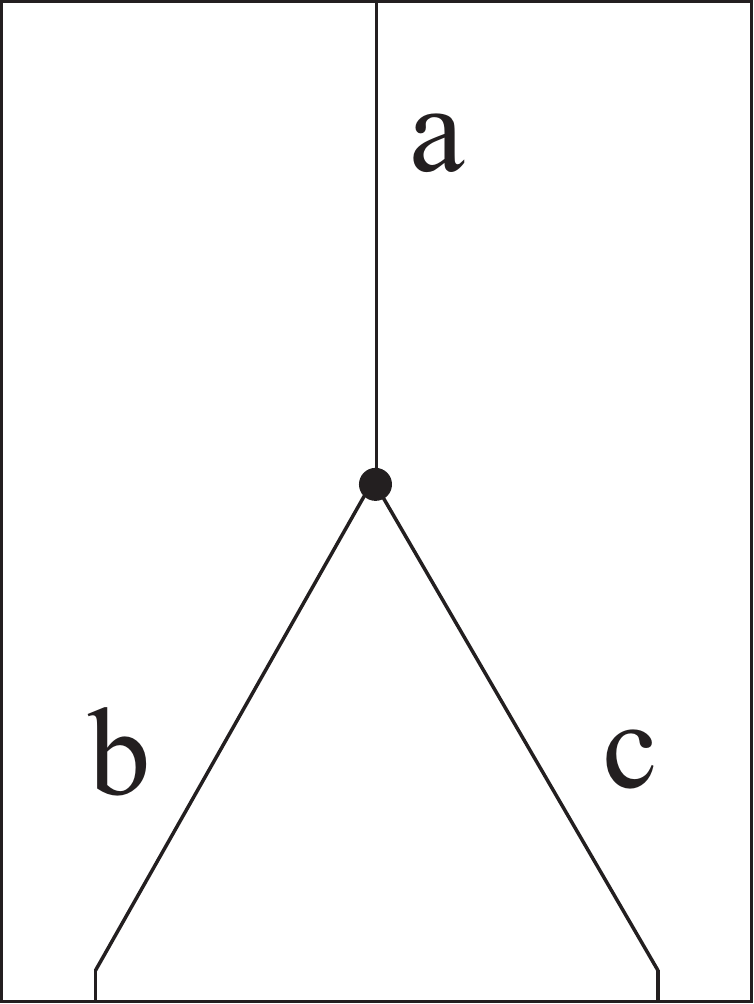}
\caption{We denote the figure on the left by the figure on the right}
\label{trivalent}
\end{figure}

\begin{prop}
The colored skein module $\cs(D^2;a,b,c)$ is 1 dimensional if $a, b, c$ are admissible,
$0$ dimensional otherwise.
\end{prop}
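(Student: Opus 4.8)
The plan is to reduce the problem to a combinatorial count of non-crossing matchings and then to verify that the unique surviving diagram does not vanish. By the same argument as in Proposition \ref{generating}, the module $\cs(D^2;a,b,c)$ is spanned by crossing-free diagrams with the idempotents $f_a$, $f_b$, $f_c$ placed just inside the three groups of boundary points. Such a crossing-free diagram is simply a non-crossing perfect matching of the $a+b+c$ boundary points by disjoint arcs in the disk; in particular no such diagram exists unless $a+b+c$ is even. First I would invoke property (1) of Proposition \ref{JWI}, namely $f_ne_i=0$: any arc that joins two points belonging to the same color group produces a turnback against the corresponding idempotent and hence represents $0$ in the skein module. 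Therefore only those matchings in which every arc runs between two different groups can contribute.

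Next I would parametrize the surviving matchings. Writing $\alpha$, $\beta$, $\gamma$ for the numbers of arcs joining the pairs of groups $(b,c)$, $(a,c)$, $(a,b)$ respectively, the conditions that each group be exhausted give $a=\beta+\gamma$, $b=\gamma+\alpha$, $c=\alpha+\beta$, so that $\alpha=(b+c-a)/2$, $\beta=(a+c-b)/2$, $\gamma=(a+b-c)/2$. These are non-negative integers precisely when $a+b+c$ is even and the triangle inequalities $|b-c|\le a\le b+c$ hold, i.e.\ precisely when $a,b,c$ are admissible. When the triple is not admissible there is no such solution, so every crossing-free diagram must contain a same-group arc and hence vanishes, giving a $0$-dimensional module. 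When the triple is admissible, the planar (non-crossing) constraint together with the fixed cyclic order of the three groups around $\partial D^2$ forces the matching to be unique: once $(\alpha,\beta,\gamma)$ are fixed, there is exactly one way to connect the groups without crossings. This shows that $\cs(D^2;a,b,c)$ is spanned by a single element and is therefore at most $1$-dimensional.

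It remains to show that in the admissible case this single diagram---the trivalent vertex of Figure \ref{trivalent}---is genuinely non-zero, which I expect to be the main obstacle, since the reduction above only bounds the dimension from above. The cleanest way is to exhibit a nonzero value of some linear functional on it: glue a second copy of the trivalent vertex to the first so as to close the diagram into the theta network $\theta(a,b,c)$ and evaluate it by the Kauffman bracket. It is a standard computation that $\theta(a,b,c)$ is a nonzero element of $\Lambda$ for every admissible triple (for instance it is an explicit product of quantum factorials and the $\Delta_k$'s that does not vanish in our localized ring). Since a zero element would pair to zero, the non-vanishing of $\theta(a,b,c)$ shows the trivalent vertex is nonzero, and hence $\cs(D^2;a,b,c)$ is exactly $1$-dimensional. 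The main work is thus isolating and evaluating this theta network; the admissibility bookkeeping and the uniqueness of the non-crossing matching are routine.
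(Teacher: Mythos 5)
Your proposal is correct, but it is worth noting that the paper itself does not give an argument at all: its ``proof'' is a one-line citation to \cite{L2} and \cite{K2}, where this is established as a standard fact. What you have written is essentially a reconstruction of that standard argument, and it is sound in both halves. The upper bound (dimension $\le 1$, and $=0$ in the non-admissible case) is exactly the combinatorial bookkeeping you describe: crossing-free diagrams span, the relation $f_ne_i=0$ kills any arc with both endpoints in one color group, the linear system $a=\beta+\gamma$, $b=\gamma+\alpha$, $c=\alpha+\beta$ has a non-negative integer solution precisely for admissible triples, and planarity makes the surviving matching unique. The lower bound is the part most often glossed over, and you correctly identify it as the real content: one must show the single spanning diagram is nonzero, and pairing it with its mirror image to produce the theta network $\theta(a,b,c)$ does this, since $\theta(a,b,c)$ is a nonzero element of $\Lambda$ (indeed a ratio of products of the $\Delta_k$, each of which is invertible in $\Lambda$ because $\Lambda$ inverts $A^n-1$ for all $n\in\bz^+$, so $\Lambda$ embeds in the field of rational functions in $A$ and nonzero rational functions stay nonzero). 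The citation buys the paper brevity; your version buys self-containedness and makes explicit the separation into a combinatorial upper bound and a skein-theoretic non-degeneracy argument, which is exactly how the cited references organize it.
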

\begin{proof}
This is a standard result in skein theory, see, for example, \cite{L2} or \cite{K2}.
\end{proof}

\begin{rem}\label{rtrivalent}
The generator for $\cs(D^2;a,b,c)$ is the diagram on the left of Figure \ref{trivalent},
and we use a trivalent graph with a black dot as the diagram on the right to denote the generator.
\end{rem}

\begin{de}
\label{newbasis}
We define an element $D_{a_1, \ldots, a_n}$  of $\cs(I\times I,n,h)$ as in Figure \ref{dbasis},
where the black dot is as in Remark \ref{rtrivalent},
and $a_1, \ldots, a_n$ are non-negative integers satisfying the admissible condition at each black dot.
Let $\cd_n^h$ be the set of such elements.
\end{de}

\begin{figure}[htp]
\includegraphics[width=1.2in, height=1.5in]{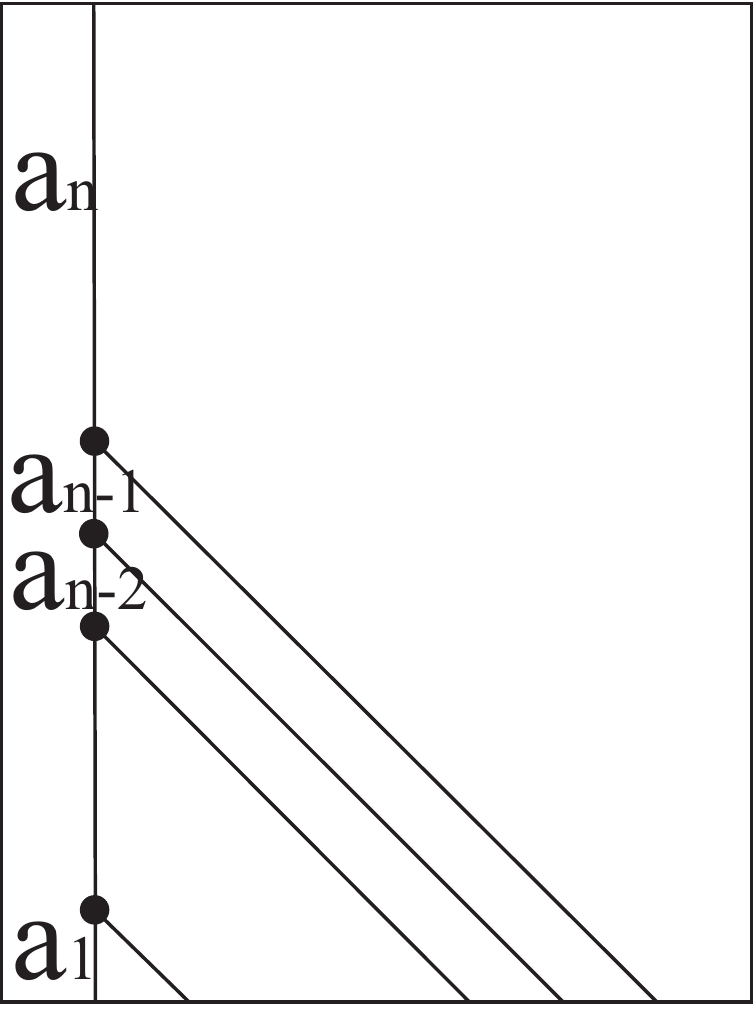}
\caption{The new basis element}
\label{dbasis}
\end{figure}

\begin{rem}
\label{conditions}
It is easy to see that the restrictions on $a_1, \ldots, a_n$ are equivalent to the following conditions:
\begin{itemize}
\item $a_n=h, a_1=1$ and $a_i\geq 0$ for $1<i<n$;
\item $|a_i-a_{i+1}|=1$ for all $0<i<n-1$.
\end{itemize}
\end{rem}

\begin{prop}
\label{basisnorm} 
We have
\[
G(D_{a_1, \ldots, a_n}, D_{b_1, \ldots, b_n})=
\left\{
\begin{array}{l l}
  \prod_{i=1}^n\frac{\theta(a_{i+1},a_i,1)}{\Delta_{a_{i+1}}} & \quad \text{if $(a_1,\ldots,a_n)=(b_1,\ldots,b_n)$};\\
  0 & \quad \text{else}.\\
\end{array} \right.
\]
\end{prop}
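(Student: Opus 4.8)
The plan is to read $G(D_{a_1,\ldots,a_n},D_{b_1,\ldots,b_n})$ as the Kauffman bracket of a single closed trivalent graph and to evaluate that graph by peeling off bigons one at a time. By the definition of the form, this pairing is obtained by gluing $D_{a_1,\ldots,a_n}$ to the reflection of $D_{b_1,\ldots,b_n}$ along all $n$ points colored $1$ and the single point colored $h$, and then evaluating in the plane. First I would unwind the geometry of Figure \ref{dbasis}: since each trivalent vertex of these elements carries exactly one leg colored $1$ running to the boundary, gluing against the reflected diagram identifies the $i$-th leg of one with the $i$-th leg of the other, turning the two legs into a single rung colored $1$ joining the corresponding vertices $V^a_i$ and $V^b_i$. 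The upshot is that the two spines, with edge colors $a_1,\ldots,a_n$ and $b_1,\ldots,b_n$, become the two sides of a chain of bigons, the $i$-th region being bounded by a rung colored $1$ together with the spine edges; the bottom ($a_1=b_1=1$) and the top ($a_n=b_n=h$) legs close the chain off. In particular no recoupling is needed, only repeated bigon removal.

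The key tool is the standard bubble identity from skein theory (\cite{L2}, \cite{K2}): a bigon whose two edges are colored $x$ and $y$, attached to the rest of a diagram along one edge colored $z$ on one side and $z'$ on the other, equals $\delta_{z,z'}\,\theta(x,y,z)/\Delta_z$ times a single edge colored $z$, and the bigon is nonzero only when $(x,y,z)$ is admissible. I would apply this starting from the forced end $a_1=b_1=1$: the innermost bigon, with both edges colored $1$ and external continuing edges $a_2$ and $b_2$, forces $a_2=b_2$, contributes the scalar $\theta(a_2,a_1,1)/\Delta_{a_2}$ (using $a_1=1$ and the symmetry of $\theta$ in its three arguments), and collapses into a single edge colored $a_2$ joining $V^a_2$ to $V^b_2$. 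Iterating up the chain, the bigon at level $i$ has edges colored $a_i$ and $1$ with external edges $a_{i+1},b_{i+1}$, so it forces $a_{i+1}=b_{i+1}$ and contributes $\theta(a_{i+1},a_i,1)/\Delta_{a_{i+1}}$. Assembling these factors, together with the contribution of the final $h$-colored closure at the top, yields exactly the stated product, by induction on $n$.

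Two conclusions then fall out at once. If the color sequences differ, let $j$ be the largest index with $a_j\neq b_j$; the peeling succeeds from the bottom up to level $j$, where the bigon carries mismatched external colors and its $\delta$ factor vanishes, so $G=0$, giving the off-diagonal statement. If the sequences agree, the collected factors are precisely $\prod_i \theta(a_{i+1},a_i,1)/\Delta_{a_{i+1}}$. I expect the only real obstacle to be bookkeeping rather than anything conceptual: one must pin down the labeling and orientation of Figure \ref{dbasis} precisely (the conditions of Remark \ref{conditions}) so that each denominator comes out as $\Delta_{a_{i+1}}$ and so that the two ends of the chain—the $1$-colored bottom leg and especially the $h$-colored top closure, which contributes the loop factor $\Delta_h=\Delta_{a_n}$—are handled consistently with the index range of the product. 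Once the diagram is pinned down, the evaluation is a routine induction on the length of the chain using the bigon identity.
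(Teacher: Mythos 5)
Your proposal is correct and is essentially the paper's own proof: the paper's argument consists precisely of ``repeatedly using'' the displayed bubble identity (collapsing a bigon to $\frac{\delta_{n,m}\theta(n,a,b)}{\Delta_n}$ times a single edge), which is exactly your bigon-peeling induction, with the Kronecker delta forcing $a_i=b_i$ at each level and giving the off-diagonal vanishing. You spell out the chain-of-bigons geometry and the final $\Delta_h$ closure factor in more detail than the paper does (and correctly flag the boundary-index bookkeeping that the stated product glosses over), but the underlying argument is the same.
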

\begin{proof}
We just need to repeatedly use the following standard result:
\begin{equation}
\begin{minipage}{0.8in}\includegraphics[width=0.8in]{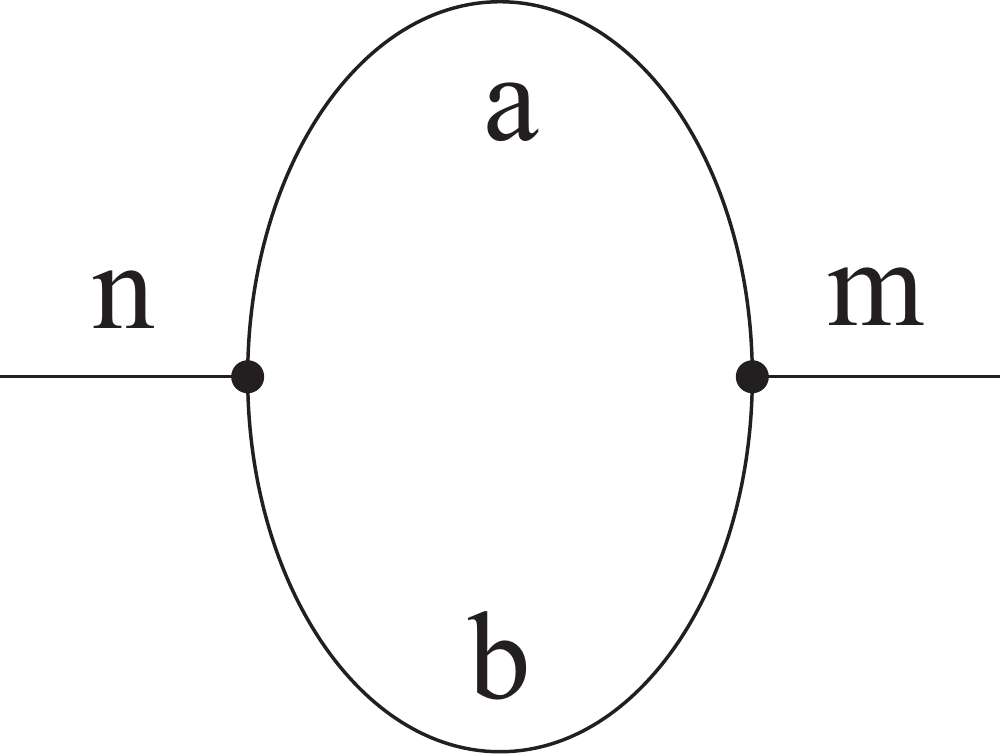}\end{minipage}=\frac{\delta_{n,m}\theta(n,a,b)}{\Delta_n}\
\begin{minipage}{0.8in}\includegraphics[width=0.8in]{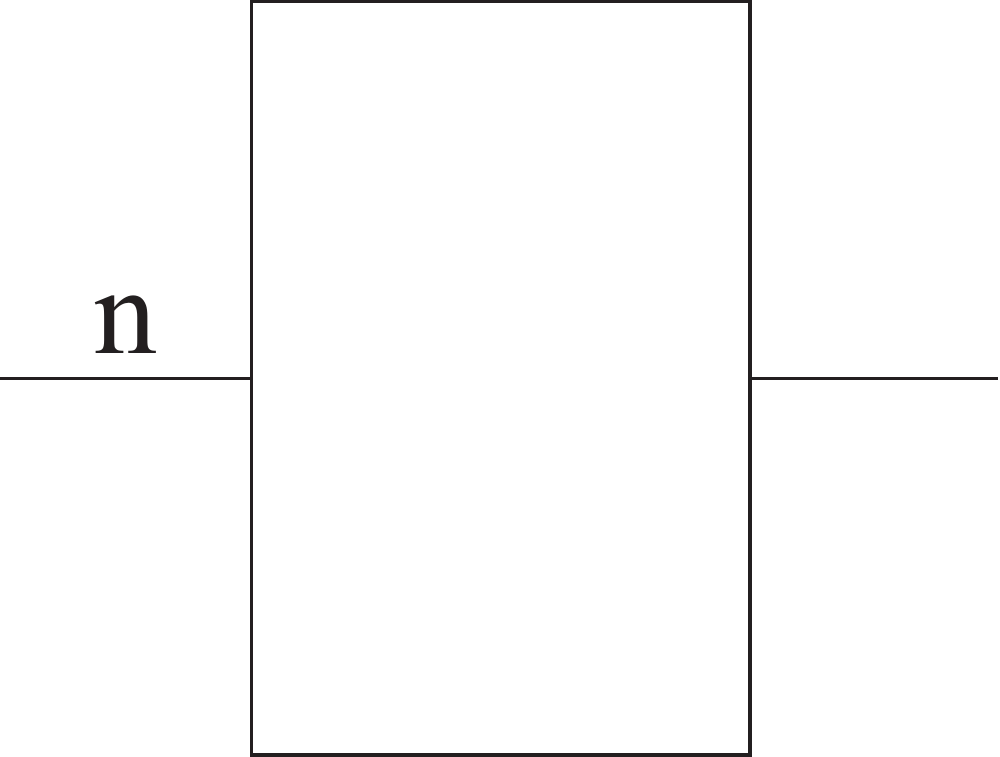}\end{minipage},
\notag
\end{equation}
where $\delta_{n,m}$ is the Kronecker delta.
\end{proof}

\begin{cor}
\label{orthogonal} 
The elements
$D_{a_1, \ldots, a_n}$ are orthogonal with respect to the natural bilinear form.
\end{cor}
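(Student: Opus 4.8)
The plan is to read the corollary off directly from Proposition \ref{basisnorm}. By definition, two elements of the skein module are orthogonal with respect to $G$ precisely when their pairing vanishes; for distinct index sequences $(a_1,\ldots,a_n)\neq(b_1,\ldots,b_n)$ this is exactly the second (``else'') branch of the formula proved in Proposition \ref{basisnorm}. Hence the statement follows at once, and no argument beyond quoting that proposition is strictly required.

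If one instead wants to see the orthogonality intrinsically, the key step is to inspect the diagram representing $G(D_{a_1,\ldots,a_n},D_{b_1,\ldots,b_n})$. Stacking $D_{a_1,\ldots,a_n}$ against the reflection of $D_{b_1,\ldots,b_n}$ coming from the definition of the bilinear form produces a ladder of trivalent vertices, in which each pair of adjacent internal edges, labeled $a_i$ on one side and $b_i$ on the other, is joined into a bubble of exactly the shape appearing in the proof of Proposition \ref{basisnorm}. Applying the bubble identity to the outermost such configuration introduces a factor $\delta_{a_i,b_i}$ and collapses that rung of the ladder, so the evaluation reduces rung by rung.

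The substance of the argument is then that a single mismatch forces the entire evaluation to zero: the first index $i$ at which $a_i\neq a_i'$ (with $a_i'=b_i$) produces a Kronecker delta $\delta_{a_i,b_i}=0$, killing the whole bracket regardless of the remaining labels. The only point requiring care is to confirm that the bubble identity is applicable at each stage, i.e.\ that after each reduction the diagram again presents a bubble of the admissible form; this is guaranteed by the linear ``ladder'' structure of the elements $D_{a_1,\ldots,a_n}$ recorded in Remark \ref{conditions}, where $|a_i-a_{i+1}|=1$ ensures the labels at each trivalent vertex are admissible throughout. Since this chain reduction is already executed in the proof of Proposition \ref{basisnorm}, the cleanest route, and the one I would take, is simply to invoke that proposition and observe that its vanishing case is precisely the assertion of orthogonality.
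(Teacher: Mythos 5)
Your proposal is correct and matches the paper's own proof, which likewise consists of the single observation that the statement follows immediately from the vanishing case of Proposition \ref{basisnorm}. The extra diagrammatic discussion you include is a sound unpacking of that proposition's bubble-collapse argument, but it is not needed and the paper does not repeat it here.
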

\begin{proof}
The proof follows immediately from Proposition \ref{basisnorm}.
\end{proof}

\begin{prop}
\label{generate}
Each element in $\cs(I\times I,n,h)$ can be written as a linear combination of elements in $\cd_n^h$.
\end{prop}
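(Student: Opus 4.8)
The goal is to show that the set $\cd_n^h$ spans $\cs(I\times I,n,h)$. Since we already know from Proposition \ref{generating} that the crossing-free diagrams $\cb_n^h$ form a generating set, it suffices to express an arbitrary element of $\cb_n^h$ as a linear combination of elements of $\cd_n^h$. The plan is therefore to give a procedure that rewrites any crossing-free diagram in the $\cd_n^h$-form, namely a ``trivalent ladder'' in which the idempotents $a_1,\ldots,a_n$ are stacked vertically and coupled to the successive points colored $1$ on $I\times\{0\}$ via admissible trivalent vertices.

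First I would recall the two tools that make such rewriting possible. The recursive formula for the Jones--Wenzl idempotents (Figure \ref{recursiveformula}) lets us introduce or expand idempotents, and the fusion/recoupling identities of skein theory --- in particular the standard expansion of two parallel strands colored $a$ and $1$ into a sum over admissible colors $a\pm 1$ --- let us reorganize the local picture at each of the $n$ input points. Concretely, I would process the strands entering from $I\times\{0\}$ one at a time, from left to right. At each stage I keep a partial trivalent graph whose ``running color'' $a_i$ records the idempotent carried by the bundle of strands processed so far; fusing in the next point colored $1$ via the admissible rule replaces $a_i$ by $a_{i\pm1}$ with coefficients in $\Lambda$, which is exactly the branching recorded by the conditions in Remark \ref{conditions} ($|a_i-a_{i+1}|=1$). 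Iterating over all $n$ points converts the original diagram into a linear combination of fully fused trivalent ladders, i.e.\ elements $D_{a_1,\ldots,a_n}$.

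The boundary bookkeeping must be checked to land in the right index range. The bottom of the ladder starts from a single point colored $1$, forcing $a_1=1$, and the top must carry exactly the color $h$ of the distinguished point on $I\times\{1\}$, forcing $a_n=h$; the admissibility at each trivalent vertex is automatic because at every step we fuse with a strand colored $1$, and $(a_i,a_{i+1},1)$ is admissible precisely when $|a_i-a_{i+1}|=1$ and $a_i+a_{i+1}$ is odd. Any term in which a color would be forced negative or would fail to reach $h$ at the top simply does not arise (its trivalent coupling is inadmissible and hence zero by the preceding proposition), so every surviving term is a genuine element of $\cd_n^h$. This shows the crossing-free diagram lies in the span of $\cd_n^h$, and since such diagrams generate, so does $\cd_n^h$.

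The main obstacle I anticipate is not the existence of the fusion moves but the verification that repeatedly applying them genuinely terminates in the standard ladder form and produces only admissible labelings, rather than an uncontrolled proliferation of diagrams. The cleanest way to handle this is induction on $n$: for $n=1$ the module is $\cs(D^2;\,\cdot\,)$ and the claim is immediate, and for the inductive step one splits off the leftmost input point, applies the fusion identity once to reduce the remaining picture to a skein with $n-1$ colored points (with running color $a_2$ in place of the fixed initial $1$), and invokes the inductive hypothesis. I would also note that we are only claiming \emph{spanning} here; the reverse inequality on dimensions --- and hence that $\cd_n^h$ and $\cb_n^h$ are actually bases related by a unipotent upper-triangular change of basis, as asserted in the introduction --- is a separate matter established elsewhere in the paper, so I will not attempt it in this proof.
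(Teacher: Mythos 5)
Your proof is correct, but it takes a genuinely different route from the paper's. The paper makes the same first reduction (it suffices to handle the crossing-free diagrams $\cb_n^h$), but then inducts on $n\geq h$ in steps of two: a crossing-free diagram with $n>h$ must contain a turn-back joining two adjacent points of $I\times\{0\}$, so it factors as a diagram in $\cb_{n-2}^h$ stacked over a cup; the upper layer is rewritten in terms of $\cd_{n-2}^h$ by the inductive hypothesis, and one application of the Fusion Lemma to the cup region converts each resulting term into a combination of elements of $\cd_n^h$. You instead fuse the bottom boundary points into a ladder one at a time from left to right, which amounts to inserting the fusion expansion of the identity on $n$ strands colored $1$ just above $I\times\{0\}$; for each labeling $(a_1,\ldots,a_n)$ the part of the original diagram sitting above the splitting ladder is an element of $\cs(D^2;a_n,h)$, hence vanishes unless $a_n=h$ (this is the $0$-or-$1$-dimensionality proposition with third color $0$) and otherwise is a scalar multiple of the $h$-strand, leaving a multiple of $D_{a_1,\ldots,a_n}$. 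Each approach has its own cost: your left-to-right induction requires strengthening the statement to skein modules whose leftmost point carries an arbitrary color (your ``running color''), since after one fusion that color is $a_2\in\{0,2\}$ rather than $1$ --- you gesture at this, but the generalized claim should be stated explicitly as the actual induction hypothesis. In exchange, your argument never needs the existence of turn-backs or the two-layer decomposition, and it in fact applies verbatim to diagrams with crossings, so the preliminary reduction via Proposition \ref{generating} becomes optional rather than essential. The paper's cap-peeling induction stays entirely within the family $\cb_n^h$ with no recoloring, at the price of applying fusion inside an already-built ladder.
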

\begin{proof}
It is sufficient to show the result holds for all elements in $\cb_n^h$.
We proceed the proof by induction on $n\geq h$.
Clearly, the result is true for $\cb_h^h$.
Suppose the result is true for $\cb_k^h$ with $h\leq k\leq n-1$.
The proof for the element $\cb_n^h$ is obtained easily from Figure \ref{induction}.
\end{proof}

\begin{figure}[h]
\centering
\includegraphics[width=0.8in, height=1in]{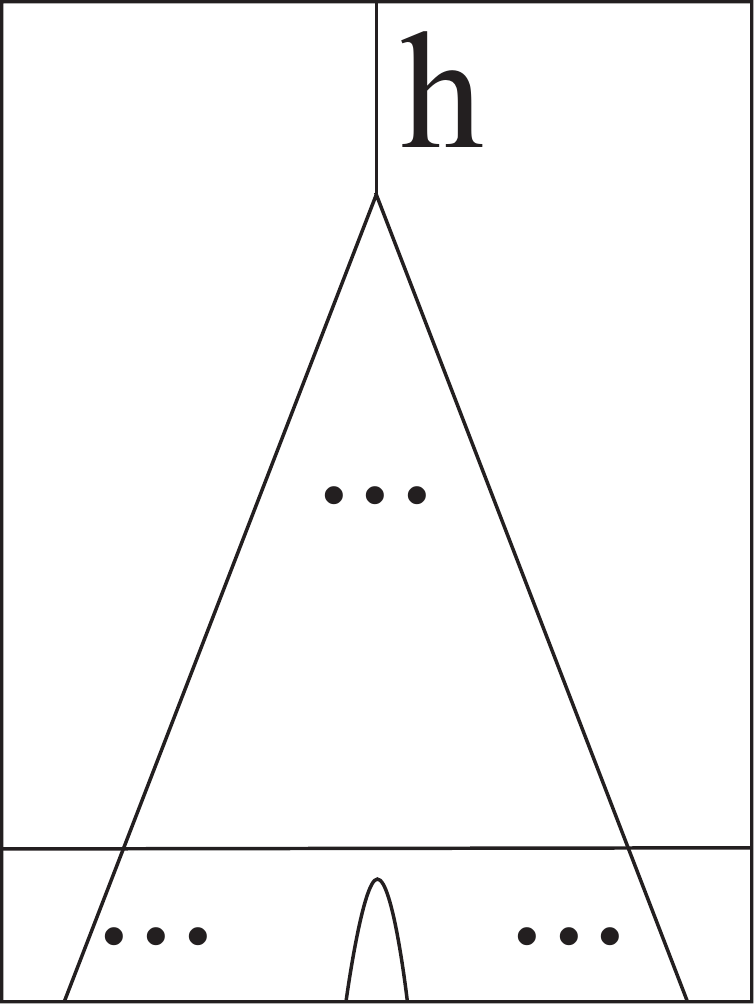}
\includegraphics[width=0.2in, height=1in]{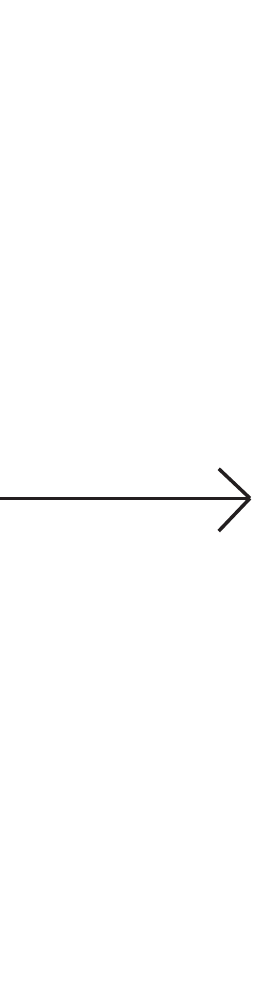}
\includegraphics[width=0.8in, height=1in]{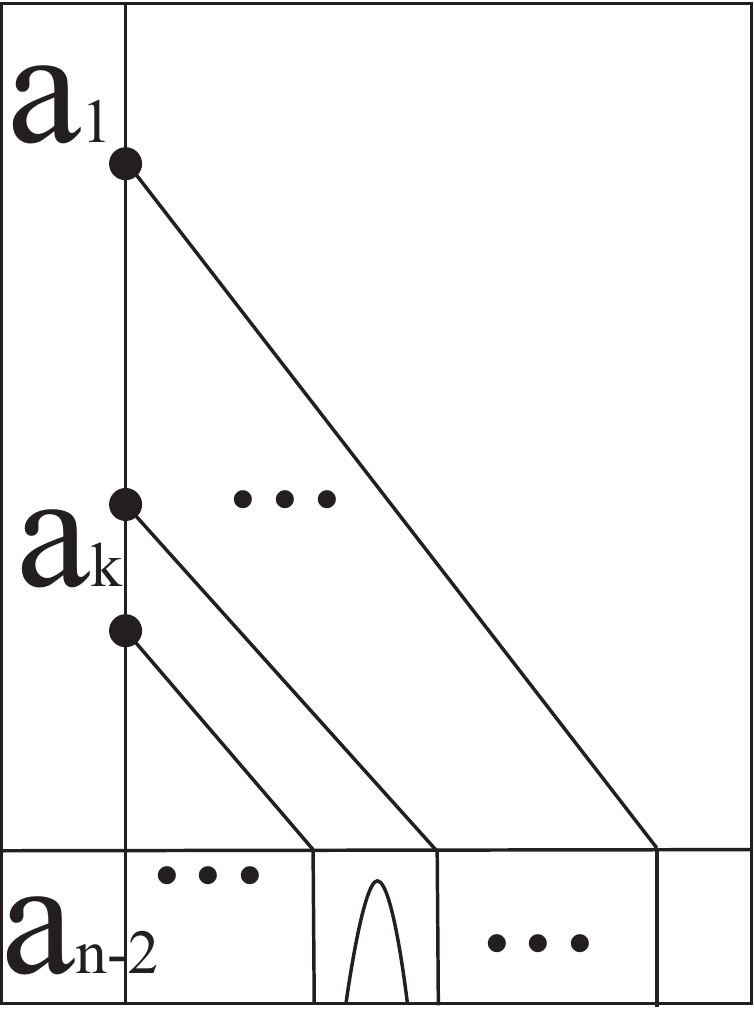}
\includegraphics[width=0.2in, height=1in]{v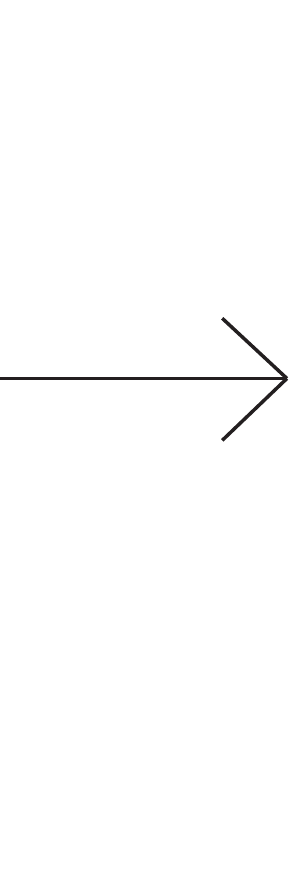}
\includegraphics[width=0.8in, height=1in]{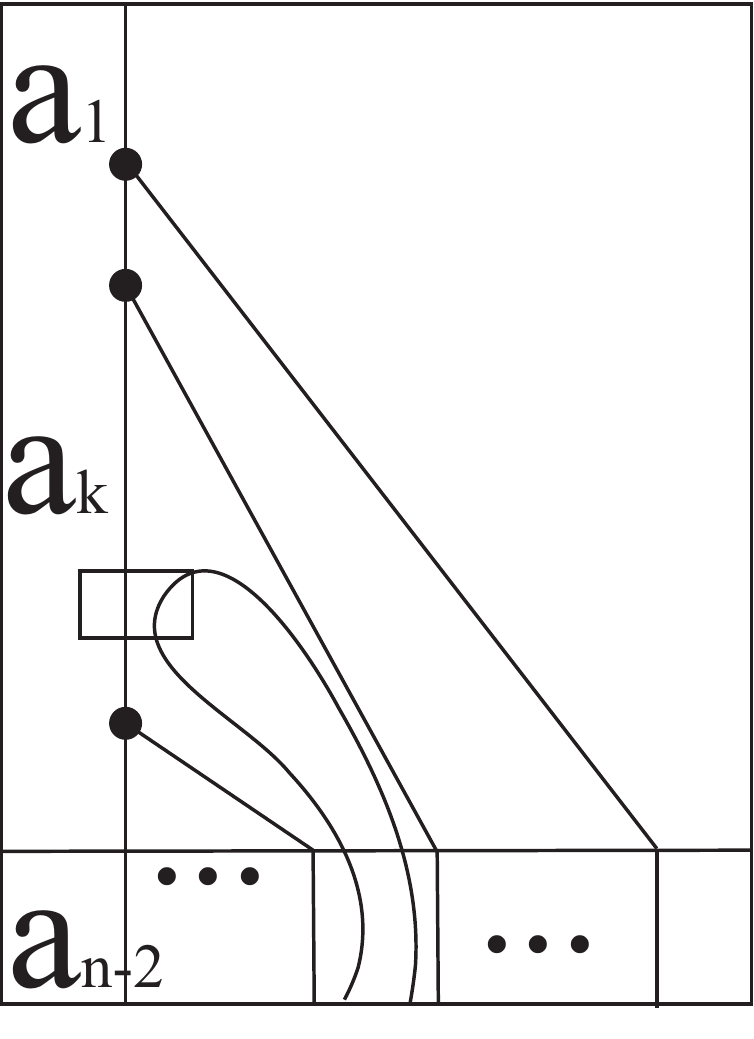}
\includegraphics[width=0.2in, height=1in]{varrow.pdf}
\includegraphics[width=0.8in, height=1in]{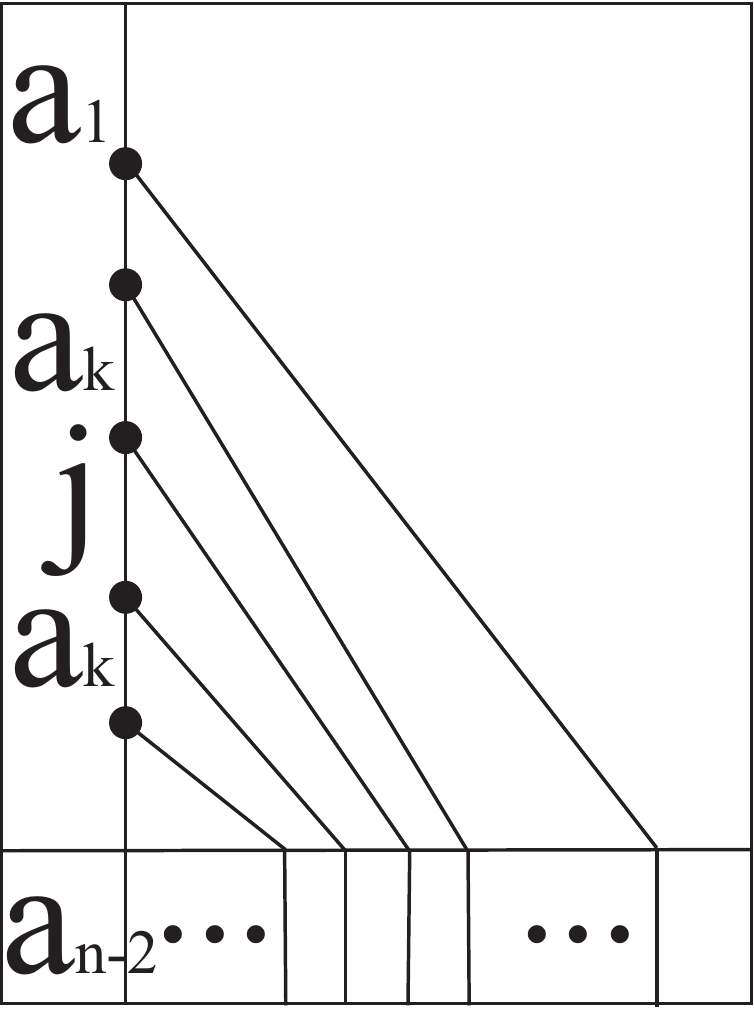}
\caption{For an element $x\in\cb_n^h$, we consider it as being the composition of two layers as in the first Diagram.
The upper half diagram is in $\cb_{n-2}^h$. By induction, we can then write $x$ as a linear combination of figures in the second diagram.
Next, we pull over the turn-back and apply the Fusion Lemma \cite{K2} in skein theory to the boxed area.
We can write $x$ as a linear combination of the figures in the last diagram}
\label{induction}
\end{figure}

\begin{cor}
\label{isabasis}
The set $\cd_n^h$ is a basis for $\cs(I\times I,n,h)$.
\end{cor}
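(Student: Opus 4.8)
The plan is to combine the spanning statement just established with the orthogonality of the $D_{a_1,\ldots,a_n}$ to deduce linear independence; spanning together with linear independence then gives a basis. By Proposition \ref{generate} the set $\cd_n^h$ already generates $\cs(I\times I,n,h)$, and it is a finite set, so the only thing left to prove is that its elements are linearly independent over $\Lambda$.

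To get independence I would run the usual orthogonality argument. Suppose $\sum_{D\in\cd_n^h} c_D\,D=0$ is a linear relation with coefficients $c_D\in\Lambda$. Fix an arbitrary $D'=D_{b_1,\ldots,b_n}$ and apply $G(\,\cdot\,,D')$ to both sides. By Corollary \ref{orthogonal} every cross term vanishes, leaving $c_{D'}\,G(D',D')=0$. Recall that $\Lambda=\bz[A,A^{-1}]$ localized by inverting the set generated by the $A^n-1$ is an integral domain. Hence, provided the self-pairing $G(D',D')$ is nonzero, we may cancel it to conclude $c_{D'}=0$; since $D'$ is arbitrary, all coefficients vanish and independence follows.

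The crux is therefore the nonvanishing of $G(D',D')$. By Proposition \ref{basisnorm} this self-pairing equals $\prod_{i=1}^n \theta(b_{i+1},b_i,1)/\Delta_{b_{i+1}}$, so it suffices to check that each $\Delta_k$ and each factor $\theta(b_{i+1},b_i,1)$ is a nonzero element of $\Lambda$. For $\Delta_k$ I would use the closed formula from the Remark after Proposition \ref{JWI}: factoring $A^{2(k+1)}-A^{-2(k+1)}=A^{-2(k+1)}(A^{4(k+1)}-1)$ exhibits the numerator as one of the elements we have inverted, hence a unit, and the same holds for $A^2-A^{-2}$, so $\Delta_k\neq 0$. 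For the theta values I would expand them by the standard formula as a ratio of products of quantum integers $[m]=(A^{2m}-A^{-2m})/(A^2-A^{-2})$; admissibility of the triple $(b_{i+1},b_i,1)$, guaranteed by Remark \ref{conditions} where consecutive entries differ by $1$, forces these quantum integers to have positive argument, and each such $[m]$ is a unit in $\Lambda$ by the same factorization. Thus $G(D',D')$ is in fact a unit, in particular nonzero.

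I expect the only real obstacle to be this last nonvanishing check: the orthogonality step is immediate once the self-pairings are known to be nonzero, but establishing that requires unwinding the $\theta$ symbols and using in an essential way that $\Lambda$ is precisely the localization in which every $A^n-1$, equivalently every quantum integer, becomes a unit. With this in hand, Proposition \ref{generate} supplies spanning and the argument above supplies independence, so $\cd_n^h$ is a basis; as a byproduct the unitriangular change of basis mentioned earlier also shows $\cb_n^h$ is a basis.
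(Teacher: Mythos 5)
Your proof is correct and takes essentially the same route as the paper, which deduces the corollary by combining Proposition \ref{generate} (spanning) with Corollary \ref{orthogonal} (orthogonality). The paper declares this combination immediate; your additional verification that each self-pairing $G(D',D')$ is a unit in $\Lambda$ --- using Proposition \ref{basisnorm} and the fact that $\Lambda$ inverts every $A^n-1$ --- simply makes explicit the nonvanishing detail the paper leaves implicit.
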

\begin{proof}
The proof follows immediately from Corollary \ref{orthogonal} and Proposition \ref{generate}.
\end{proof}

By the Proposition \ref{basisnorm} and Corollary \ref{isabasis}, 
we obtain the fact that the matrix of the bilinear form with respect to the basis $\cd_n^h$ is diagonal.
To calculate its determinant, we just multiply the diagonal entries together.
However, we want to calculate the determinant of the bilinear form with respect to the natural basis $\cb^h_n$.
Thus, we need to find the transformation matrix $A$ between $\cd_n^h$ and $\cb_n^h$.
To do so, we have to align all the elements in $\cd_n^h$ and $\cb_n^h$.
At first, we define a total order on $\cd_n^h$ and $\cb_n^h$.

\begin{de}
For a set of $n$-tuples $\{(a_1, \ldots, a_n)\}$, we give a lexigraphic order on it as follows:
\begin{itemize}
\item $(a_1, \ldots, a_n)>(b_1, \ldots, b_n)$ if there is a $j$ such that $a_i=b_i$ for all $i<j$ and $a_j>b_j$;
\item $(a_1, \ldots, a_n)=(b_1, \ldots, b_n)$ if $a_i=b_i$ for all $i$;
\item  $(a_1, \ldots, a_n)<(b_1, \ldots, b_n)$, otherwise.
\end{itemize}
\end{de}

Since each element in $\cd_n^h$ corresponds to an $n$-tuple, we can give the $\cd_n^h$ above total order.
We can then align the elements in $\cd_n^h$ from the maximum to the minimum vertically.
We will assign a new system to denote elements in $\cb_n^h$ such that we can do the same thing.

\begin{de}
We construct a new element $B_{a_1,...,a_n}$ of $\cs(I\times I,n,h)$ from $D_{a_1,...,a_n}$ as follows.
We do not insert corresponding idempotents into segments except for $a_n$.
We delete each black dot in the diagram and put a circle around it.
If $a_i=a_{i+1}+1$, then we put the diagram on the left of Figure \ref{circle} in the circle.
If $a_i=a_{i+1}-1$, then we put the diagram on the right of Figure \ref{circle} in the circle.
\end{de}
\begin{figure}[htp]
\includegraphics[width=0.6in, height=0.6in]{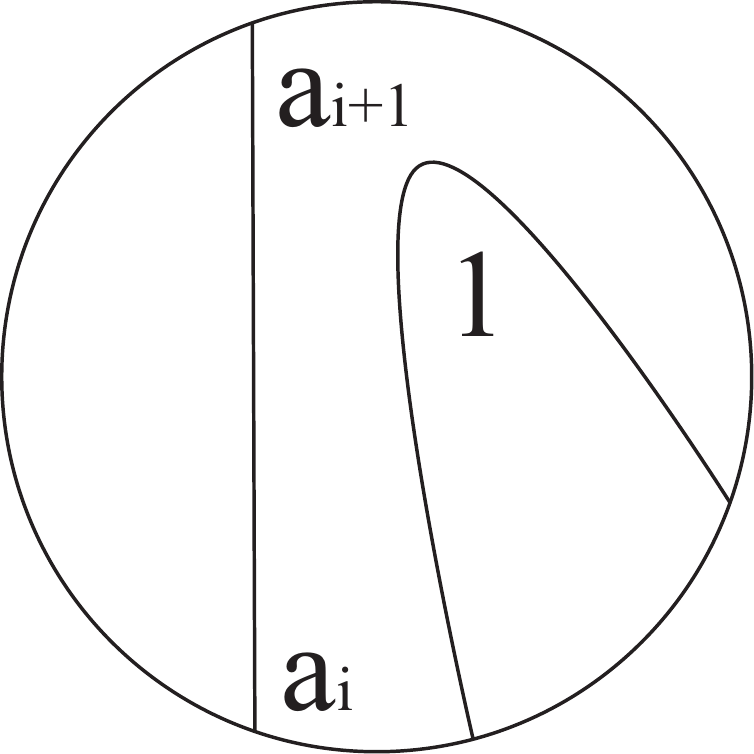}\hskip 0.5in
\includegraphics[width=0.6in, height=0.6in]{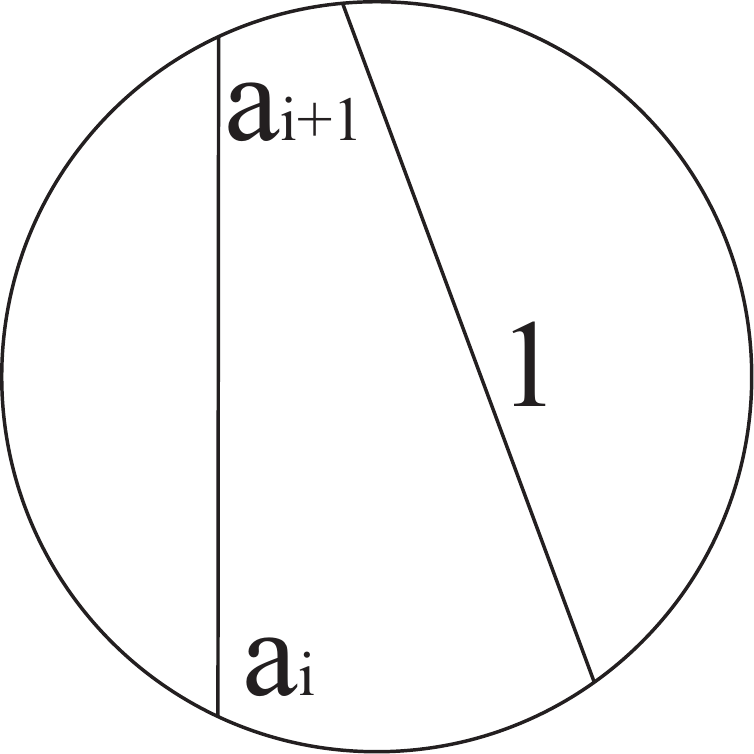}
\caption{We fill the circle with those two diagrams according to $a_i$ and $a_{i+1}$}
\label{circle}
\end{figure}

The proofs of the following two lemmas are very similar to the proofs in \cite[Lemma 5.1, Proposition 5.4]{C}.

\begin{lem}\label{equal} For all $n$,
$$G(D_{a_1,\ldots,a_n},B_{a_1,\ldots,a_n})=G(D_{a_1,\ldots,a_n},D_{a_1,\ldots,a_n}).$$
\end{lem}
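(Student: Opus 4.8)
The plan is to reduce the claim to the orthogonality of the basis $\cd_n^h$. Since $B_{a_1,\ldots,a_n}$ is a crossing-free diagram, Proposition \ref{generate} lets me expand it in the orthogonal basis as $B_{a_1,\ldots,a_n}=\sum_{(b)}m_{(b)}D_{b_1,\ldots,b_n}$. Substituting this into the bilinear form and invoking Corollary \ref{orthogonal}, every cross term $G(D_{a_1,\ldots,a_n},D_{b_1,\ldots,b_n})$ with $(b)\neq(a)$ vanishes, leaving $G(D_{a_1,\ldots,a_n},B_{a_1,\ldots,a_n})=m_{(a)}\,G(D_{a_1,\ldots,a_n},D_{a_1,\ldots,a_n})$. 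Hence the lemma is equivalent to showing that the diagonal coefficient $m_{(a)}$ equals $1$, and this is the statement I would actually prove.

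To pin down $m_{(a)}=1$ I would argue skein-theoretically, directly inside the bracket that defines $G(D_{a_1,\ldots,a_n},B_{a_1,\ldots,a_n})$. The diagrams $D_{a_1,\ldots,a_n}$ and $B_{a_1,\ldots,a_n}$ are built on the same spine and differ only in that $D_{a_1,\ldots,a_n}$ carries a Jones--Wenzl idempotent $f_{a_i}$ on every internal edge together with an admissible trivalent vertex at each black dot, whereas $B_{a_1,\ldots,a_n}$ has all of these deleted and replaced by the bare cap/cup fillings of Figure \ref{circle}, retaining only $f_{a_n}=f_h$ on the top edge. The key observation is that in the closed diagram for the pairing the left factor $D_{a_1,\ldots,a_n}$ already presents an idempotent along each edge where it meets the bare strands coming from $B_{a_1,\ldots,a_n}$. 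Using $f_k^2=f_k$ from Proposition \ref{JWI}, I would reinsert and slide these idempotents across the gluing arcs, so that each idempotent missing on the $B$-side is restored by its neighbour on the $D$-side; proceeding edge by edge from the $a_n=h$ end inward converts $B_{a_1,\ldots,a_n}$ into $D_{a_1,\ldots,a_n}$ inside the bracket without changing its value.

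Equivalently, and this is the same computation seen through the change-of-basis matrix, I would expand each idempotent in $D_{a_1,\ldots,a_n}$ via Proposition \ref{JWI}(2) as the identity plus a combination of diagrams each containing some $e_j$. Taking the identity part at every vertex reproduces precisely the crossing-free diagram $B_{a_1,\ldots,a_n}$ with coefficient $1$, while each turnback term, after resolving the trivalent vertices, yields crossing-free diagrams indexed by tuples $(b)\neq(a)$. This is exactly the assertion that the transition matrix between $\cb_n^h$ and $\cd_n^h$ is triangular with $1$'s on the diagonal, and it gives $m_{(a)}=1$.

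The hard part will be the local bookkeeping at a single trivalent vertex: verifying that absorbing the idempotent $f_{a_i}$ across the two fillings of Figure \ref{circle} (the cases $a_i=a_{i+1}+1$ and $a_i=a_{i+1}-1$) genuinely reproduces the admissible vertex on the triple $(a_i,a_{i+1},1)$ with coefficient exactly $1$, introducing neither a spurious scalar nor a lower-order $D_{b_1,\ldots,b_n}$ that would perturb the diagonal entry. This step uses the admissibility of the triples together with the recursive description of the idempotents in Figure \ref{recursiveformula}, and it runs parallel to the argument of \cite[Lemma 5.1]{C}; the global identity then follows by iterating the local one along the spine.
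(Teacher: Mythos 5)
Your proposal is correct in substance, and its engine is the same as the paper's: the paper proves Lemma \ref{equal} by deferring to \cite[Lemma 5.1]{C}, and the argument there is essentially your second paragraph --- pair $D_{a_1,\ldots,a_n}$ against the bare diagram and restore the missing idempotents vertex by vertex, using $f_kf_k=f_k$, the absorption identity $f_n(f_m\otimes 1_{n-m})=f_n$, and the annihilation $f_ne_i=0$ from Proposition \ref{JWI}. What you do differently is the logical packaging. The paper establishes Lemmas \ref{equal} and \ref{0} first and only afterwards deduces that the change-of-basis matrix is unitriangular (Proposition \ref{transformmatrix}); you run the implication backwards, reducing the lemma via Proposition \ref{generate} and Corollary \ref{orthogonal} to the claim that the diagonal coefficient $m_{(a)}$ equals $1$, and proving that claim by expanding each Jones--Wenzl idempotent as identity plus turnback terms. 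This is legitimate provided the triangularity is proved directly, as you propose; quoting Proposition \ref{transformmatrix} itself would be circular, since the paper derives it from this very lemma. The reduction buys a clean separation between soft linear algebra and the single skein-theoretic fact that must be checked.

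Two repairs are needed to make the outline airtight. First, in your third paragraph, knowing only that the turnback terms produce tuples $(b)\neq(a)$ is not enough: if $D_{a}=B_{a}+\sum_{(b)\neq(a)}c_{(b)}B_{b}$ with corrections scattered arbitrarily, the inverse matrix need not have unit diagonal (for instance $D_1=B_1+2B_2$, $D_2=B_1+B_2$ forces $B_1=-D_1+2D_2$). You need the stronger, and true, statement that every turnback term satisfies $(b)<(a)$ in the lexicographic order: inserting a turnback strictly lowers the through-count at its own slice and cannot raise it at any other slice, so the matrix expressing the $D$'s in the $B$'s is unitriangular and hence so is its inverse. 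Second, in your second paragraph the idempotents cannot literally be ``slid across the gluing arcs'': the edge of $D_{a_1,\ldots,a_n}$ carrying $f_{a_i}$ and the corresponding bare edge of $B_{a_1,\ldots,a_n}$ are separated by the trivalent fillings, so the restoration must be pushed through the local pictures of Figure \ref{circle}, where the case $a_i=a_{i+1}-1$ genuinely needs $f_ne_i=0$ and not merely $f^2=f$. You correctly flag this local bookkeeping as the hard part; it is exactly what \cite[Lemma 5.1, Proposition 5.4]{C} supplies, and with these two adjustments your argument goes through.
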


\begin{lem}\label{0}
If $(a_1,\ldots,a_n)>(b_1,\ldots,b_n)$, then
$$G(D_{a_1,\ldots,a_n},B_{b_1,\ldots,b_n})=0.$$
\end{lem}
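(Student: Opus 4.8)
The plan is to realize $G(D_{a_1,\ldots,a_n},B_{b_1,\ldots,b_n})$ as the Kauffman bracket of the single planar diagram obtained by stacking $D_{a_1,\ldots,a_n}$ on the reflection of $B_{b_1,\ldots,b_n}$ and gluing along the common boundary points, and then to exploit the absorption property $f_m e_i = 0$ of Proposition \ref{JWI}. The structural asymmetry I would lean on is that in $D_{a_1,\ldots,a_n}$ every trunk segment carries a Jones--Wenzl idempotent, whereas $B_{b_1,\ldots,b_n}$ carries no interior idempotent, only the circle fillings of Figure \ref{circle}. Consequently every idempotent in the glued diagram is inherited from the $D$ side and is available to annihilate any turn-back produced on the $B$ side.

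First I would locate the smallest index $j$ with $a_j\neq b_j$. Since $a_1=b_1=1$ we have $j\geq 2$, and because $a_i=b_i$ for $i<j$ while consecutive entries of each admissible tuple differ by exactly one (Remark \ref{conditions}), the common value $c:=a_{j-1}=b_{j-1}$ satisfies $a_j,b_j\in\{c-1,c+1\}$; the hypothesis $(a_1,\ldots,a_n)>(b_1,\ldots,b_n)$ then forces $a_j=c+1$ and $b_j=c-1$. Next I would collapse the matching bottom portion: on the levels $1,\ldots,j-1$ the two diagrams carry identical trivalent data meeting the idempotents of the $D$ side, so repeated application of the standard reduction used to prove Proposition \ref{basisnorm} contracts these levels to a nonzero scalar times a single idempotent $f_c$ placed at level $j-1$. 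This reduces the computation to a local diagram around level $j$, built from $f_c$, the color-$1$ leg glued across the two sides, the increasing vertex $c\to c+1$ of $D_{a_1,\ldots,a_n}$ together with its idempotent $f_{c+1}$, and the decreasing filling of $B_{b_1,\ldots,b_n}$.

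The heart of the argument is then the local diagrammatic chase. On the $D$ side the leg merges into the trunk and feeds the idempotent $f_{c+1}$; on the reflected $B$ side the decreasing filling caps that same leg against a trunk strand. After gluing the legs and isotoping, this cap becomes a turn-back among the strands running into one of the idempotents $f_c$ or $f_{c+1}$ inherited from $D_{a_1,\ldots,a_n}$, so that the local tangle factors through some $e_i$ composed with that idempotent. By $f_m e_i=0$ the local diagram, and hence the whole bracket, evaluates to $0$, the scalar from the bottom collapse being irrelevant as it multiplies zero. I expect this final step, namely verifying that the increase-versus-decrease mismatch genuinely deposits a turn-back onto a Jones--Wenzl idempotent of the $D$ side rather than merely onto a bare strand, to be the main obstacle; it is precisely the point at which the explicit pictures of \cite[Lemma 5.1, Proposition 5.4]{C} carry out the work, and the same collapse with matching steps (producing the $\theta$ factors instead of a vanishing) underlies the companion Lemma \ref{equal}.
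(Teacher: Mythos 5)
Your skeleton is the same one the paper itself relies on: the paper prints no argument for this lemma, deferring to \cite[Lemma 5.1, Proposition 5.4]{C}, and what you sketch is that diagrammatic argument. Your reduction to the first mismatch is correct and is the right start: minimality of $j$, Remark \ref{conditions}, and the definition of the order do force $a_j=c+1$ and $b_j=c-1$ with $c=a_{j-1}=b_{j-1}$, so the $j$th leg is capped back on the $B$ side while it feeds the idempotent $f_{c+1}$ on the $D$ side.

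However, your Step 2 has a genuine gap as justified. The reductions behind Proposition \ref{basisnorm} are bubble (theta) evaluations, which require Jones--Wenzl idempotents at both ends of each bubble; in $G(D_{a_1,\ldots,a_n},B_{b_1,\ldots,b_n})$ the $B$ side carries no interior idempotents, so there are no bubbles to evaluate. More seriously, the levels $1,\ldots,j-1$ do not form a closed subdiagram that could ``contract to a scalar times $f_c$'': a cap of $B_{b_1,\ldots,b_n}$ may join a point $i<j$ to a point $i'>j$. For instance with $n=5$, $h=1$, $a=(1,2,3,2,1)$, $b=(1,2,1,0,1)$, one has $j=3$ while $B_b$ contains the cap $\{1,4\}$ spanning level $3$; the region below level $j$ is then a tangle with $c$ strands on each side, not a closed diagram. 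The correct replacement for your contraction is the ideal property of $f_c$: that tangle, read from its $B$ boundary to its $D$ boundary, is an element $T\in TL_c$ composed with the $f_c$ sitting on the trunk of $D_{a_1,\ldots,a_n}$, and Proposition \ref{JWI}(1),(2) give $f_cT=\lambda f_c$ where $\lambda$ is the coefficient of the identity in $T$ (no theta factors, and no nonvanishing claim is needed, as you note). Granting this, the step you flag as the main obstacle is resolvable rather than deferred: after the collapse, planarity forces the innermost strand leaving $f_c$ on the $B$ side to be exactly the cap terminating at point $j$; absorbing $f_c$ into $f_{c+1}$ via $f_{c+1}(f_c\otimes 1)=f_{c+1}$ (again Proposition \ref{JWI}) makes that cap join two adjacent inputs of $f_{c+1}$, and since a cup on adjacent inputs factors through $e_c$ up to a power of $\delta$, the whole diagram vanishes by $f_{c+1}e_c=0$. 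In short: right approach, the same as \cite{C} and hence as the paper, but the contraction must be argued by the absorption/ideal property of the Jones--Wenzl idempotents rather than by the theta reductions you invoke, and the final vanishing needs the adjacency argument made explicit.
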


\begin{prop} We have
\begin{center}
$\cb_n^h=\{B_{a_1, \ldots, a_n}\mid (a_1, \ldots, a_n)\ \text{satisfies the conditions in Remark \ref{conditions}}\}$.
\end{center}
\end{prop}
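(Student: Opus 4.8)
The plan is to realize the assignment $(a_1,\ldots,a_n)\mapsto B_{a_1,\ldots,a_n}$ as a bijection from the tuples described in Remark \ref{conditions} onto $\cb_n^h$, which gives the asserted set equality. For the inclusion $\{B_{a_1,\ldots,a_n}\}\subseteq\cb_n^h$ I would argue straight from the construction of $B_{a_1,\ldots,a_n}$. Each circle is filled by one of the two crossing-free local pictures of Figure \ref{circle}, and the only idempotent inserted is $f_h$ along the edge colored $a_n=h$ at the top. Hence $B_{a_1,\ldots,a_n}$ is a crossing-free diagram, and every strand reaching $I\times\{1\}$ does so by entering the box $f_h$ as one of the $h$ parallel through-strands; no cap produced by the local pictures has an endpoint on $I\times\{1\}$. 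Thus no arc joins two of the $h$ top points, so by Proposition \ref{generating} we have $B_{a_1,\ldots,a_n}\in\cb_n^h$.

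For surjectivity and injectivity at once I would build the inverse by a height function. Given a crossing-free diagram $D\in\cb_n^h$ with bottom points $p_1,\ldots,p_n$ ordered left to right, let $a_i$ be the number of arcs of $D$ separating $\{p_1,\ldots,p_i\}$ from the remaining endpoints, i.e.\ the number of strands crossing a vertical segment placed just to the right of $p_i$. Since $p_1$ opens exactly one such strand we get $a_1=1$; after all $n$ bottom points the only open strands are the $h$ going to $I\times\{1\}$, so $a_n=h$; each $a_i$ is a nonnegative count; and adjoining $p_{i+1}$ either opens one new crossing strand or closes exactly one, forcing $|a_i-a_{i+1}|=1$. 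These are precisely the conditions of Remark \ref{conditions}. The final step is to verify $B_{a_1,\ldots,a_n}=D$ by matching local pictures: the case $a_i=a_{i+1}-1$, where the count increases, corresponds to $p_{i+1}$ beginning a through-strand (the right-hand picture of Figure \ref{circle}), while $a_i=a_{i+1}+1$ corresponds to a turn-back (the left-hand picture). Since the tuple is read back off $D$, the map is injective, and realizing an arbitrary $D$ this way shows it is onto $\cb_n^h$.

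The main obstacle is this last verification, namely that the $B$-construction reproduces $D$ exactly as a diagram and not merely up to the skein relations: one must check that expanding each edge of color $a_i$ into $a_i$ parallel strands and gluing with the two local fillings yields precisely the crossing-free matching dictated by the height function, with no stray closed loops or reconnections. I note that Lemmas \ref{equal} and \ref{0} together with Proposition \ref{basisnorm} independently show the $B_{a_1,\ldots,a_n}$ are linearly independent, since the matrix $G(D_a,B_b)$ is triangular with nonzero diagonal; this reproves injectivity, but it does not by itself give the set equality, because $\cb_n^h$ is so far only known to be a generating set. Hence the geometric surjectivity supplied by the height-function argument is the essential ingredient.
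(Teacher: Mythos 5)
Your proof is correct, but it follows a genuinely different route from the paper's. The paper's proof is algebraic: it records the inclusion $\{B_{a_1,\ldots,a_n}\}\subset\cb_n^h$ and the cardinality match $|\cb_n^h|=|\{B_{a_1,\ldots,a_n}\}|$ as easy to see, and then proves the one remaining point, injectivity of $(a_1,\ldots,a_n)\mapsto B_{a_1,\ldots,a_n}$, by pairing against the $D$-basis: by Lemmas \ref{equal} and \ref{0} the values $G(B_{a_1,\ldots,a_n},D_{b_1,\ldots,b_n})$ are triangular with nonzero diagonal, so distinct tuples give distinct skein elements, and equality of the two finite sets follows from inclusion plus counting. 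You instead construct the inverse map explicitly via the height function, obtaining injectivity and surjectivity simultaneously and never touching the bilinear form. What your approach buys is self-containedness: it makes rigorous exactly what the paper leaves implicit, since the paper's cardinality claim is nothing but the height-function bijection between crossing-free diagrams and admissible tuples. What the paper's approach buys is brevity: the Gram-pairing argument sidesteps the diagram-level verification you flag as your main obstacle. That obstacle does close by a standard planarity argument, which you should spell out to be complete: a crossing-free diagram in $\cb_n^h$ is uniquely determined by its height sequence, because at each down-step the closing arc must, to avoid crossings, attach to the most recently opened strand that is still open, and the $h$ strands remaining open at the end attach to the $h$ top points in the unique planar order; since $B_{a_1,\ldots,a_n}$ is crossing-free with height sequence exactly $(a_1,\ldots,a_n)$, it must coincide with the given diagram $D$. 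Your closing observation is also apt: injectivity alone (whether from your inverse or from the paper's lemmas) does not yield the set equality, and where the paper supplies the missing ingredient by counting, you supply it by surjectivity.
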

\begin{proof}
It is easy to see that $|\cb_n^h|=|\{B_{a_1, \ldots, a_n}\}|$ and $\{B_{a_1, \ldots, a_n}\}\subset\cb_n^h$.
Therefore, it remains to prove that $B_{a_1, \ldots, a_n}\neq B_{b_1, \ldots, b_n}$ if $(a_1,\ldots,a_n)\neq(b_1,\ldots,b_n)$.
But this follows from the fact that
\begin{align*}
&\biggl(G(B_{a_1,\ldots,a_n},D_{a_1,\ldots,a_n}),G(B_{a_1,\ldots,a_n},D_{b_1,\ldots,b_n})\biggr)\\
&\qquad\qquad\neq\biggl(G(B_{b_1,\ldots,b_n},D_{a_1,\ldots,a_n}),G(B_{b_1,\ldots,b_n},D_{b_1,\ldots,b_n})\biggr)
\end{align*}
by Lemmas \ref{equal} and \ref{0}.
\end{proof}

Now, we can correspond to each element in $\cb_n^h$ a $n$-tuple $(a_1,...,a_n)$ and give $\cb_n^h$ the same total order as we did on $\cd_n^h$.
We align the elements in $\cb_n^h$ from maximum to minimum.
Then we can write $B$'s in term of $D$'s as follows:
\[
	\begin{pmatrix}
	B_{1,2,\ldots,h,h-1,h,h-1,\ldots,h}\\
	\vdots\\
	B_{1,0,1,0,\ldots,1,2,3,\ldots,h-1,h}
	\end{pmatrix}
	=
	A
 	\begin{pmatrix}
	D_{1,2,\ldots,h,h-1,h,h-1,\ldots,h}\\
	\vdots\\
	D_{1,0,1,0,\ldots1,2,3\ldots,h-1,h}
	\end{pmatrix},
\]
where $A$ is the transform matrix.

\begin{prop}
\label{transformmatrix}
We have
$$A =
\begin{pmatrix}
1 & * & \cdots &* \\
0 & 1 & \cdots &* \\
\vdots  & \vdots  & \ddots & \vdots  \\
0 & 0 & \cdots & 1
\end{pmatrix}.$$
\end{prop}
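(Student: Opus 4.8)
The plan is to read off the entries of $A$ directly by pairing both sides of the expansion of the $B$'s against the orthogonal basis $\cd_n^h$. By Corollary \ref{isabasis} every natural basis element can be written uniquely as
\[
B_{b_1,\ldots,b_n}=\sum_{(a_1,\ldots,a_n)}c_{(b),(a)}\,D_{a_1,\ldots,a_n},
\]
where the sum runs over all tuples satisfying the conditions of Remark \ref{conditions}, and the entry of $A$ in the row indexed by $(b)$ and the column indexed by $(a)$ is precisely $c_{(b),(a)}$. I would then apply $G(D_{a_1,\ldots,a_n},-)$ to this identity and invoke the orthogonality of the $D$'s from Corollary \ref{orthogonal}: every term on the right whose index differs from $(a)$ dies, leaving
\[
G(D_{a_1,\ldots,a_n},B_{b_1,\ldots,b_n})=c_{(b),(a)}\,G(D_{a_1,\ldots,a_n},D_{a_1,\ldots,a_n}).
\]
Since by Proposition \ref{basisnorm} the diagonal value $G(D_{a_1,\ldots,a_n},D_{a_1,\ldots,a_n})$ is a product of the nonzero quantities $\theta$ and $\Delta$, which are units in $\Lambda$, this identity determines $c_{(b),(a)}$ as a genuine ratio.

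With this formula in hand, both the diagonal and the strictly lower entries follow from the two preceding lemmas. For the diagonal I take $(b)=(a)$: Lemma \ref{equal} gives $G(D_{a_1,\ldots,a_n},B_{a_1,\ldots,a_n})=G(D_{a_1,\ldots,a_n},D_{a_1,\ldots,a_n})$, so the ratio above equals $1$ and every diagonal entry of $A$ is $1$. For the entries below the diagonal, recall that we have ordered both bases from the maximal tuple down to the minimal one; the entry in row $(b)$, column $(a)$ lies strictly below the diagonal exactly when $(a)>(b)$ in the lexicographic order. In that case Lemma \ref{0}, applied with the larger tuple $(a)$ in the first slot, yields $G(D_{a_1,\ldots,a_n},B_{b_1,\ldots,b_n})=0$, so $c_{(b),(a)}=0$. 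Hence $A$ is upper triangular with $1$'s on the diagonal.

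The only genuinely delicate points are bookkeeping rather than substantive. I must check that the chosen orientation of the ordering (from maximum down to minimum) aligns ``below the diagonal'' with the strict inequality $(a)>(b)$, so that Lemma \ref{0} is invoked in the correct slot; and I must confirm that the normalization factors $G(D,D)$ are invertible in $\Lambda$, so the coefficients $c_{(b),(a)}$ are well defined. The latter is guaranteed by the localization in the definition of $\Lambda$, under which each $\Delta_k$ and each $\theta(a_{i+1},a_i,1)$ is a unit. Granting Lemmas \ref{equal} and \ref{0}, no further skein-theoretic work is needed: the proposition is a purely linear-algebraic consequence of the orthogonality established in Corollary \ref{orthogonal}.
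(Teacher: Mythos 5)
Your proposal is correct and follows essentially the same route as the paper, which simply cites Lemma \ref{equal} and Lemma \ref{0}; you have filled in the implicit linear algebra (pairing against the orthogonal basis via Corollary \ref{orthogonal} and Proposition \ref{basisnorm}, and checking that the values $G(D,D)$ are units in $\Lambda$) that the paper leaves to the reader.
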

\begin{proof}
The proof follows from Lemma \ref{equal} and Lemma \ref{0}.
\end{proof}

\begin{cor}
$\cb_n^h$ is a basis for $\cs(I\times I,n,h)$.
\end{cor}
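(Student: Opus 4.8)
The plan is to deduce this immediately from the structure of the transformation matrix $A$ established in Proposition \ref{transformmatrix}, using the fact that $\cd_n^h$ is already known to be a basis by Corollary \ref{isabasis}. The essential point is that a basis transported by an invertible linear map is again a basis, so the entire content of the corollary reduces to checking that $A$ is invertible over the ground ring $\Lambda$.

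First I would observe that, by Proposition \ref{transformmatrix}, the matrix $A$ is upper triangular with every diagonal entry equal to $1$. Hence $\det(A)=1$, which is a unit in $\Lambda$. It follows that $A$ is invertible over $\Lambda$, and moreover $A^{-1}$ again has entries in $\Lambda$ (one computes it by back-substitution, since the matrix is triangular with invertible diagonal).

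Next I would use the defining matrix relation between the two families. Writing the elements of $\cb_n^h$ and $\cd_n^h$ as column vectors ordered by the total order introduced above, we have $(B_{\cdots})=A\,(D_{\cdots})$. Since $A$ is invertible, this gives $(D_{\cdots})=A^{-1}(B_{\cdots})$, so each $D_{a_1,\ldots,a_n}$ is a $\Lambda$-linear combination of the elements of $\cb_n^h$. Because $\cd_n^h$ is a basis by Corollary \ref{isabasis}, the elements of $\cb_n^h$ therefore span $\cs(I\times I,n,h)$.

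Finally, I would upgrade spanning to the full basis property. The module $\cs(I\times I,n,h)$ is free with basis $\cd_n^h$, and $|\cb_n^h|=|\cd_n^h|$ since both sets are indexed by the same collection of admissible tuples $(a_1,\ldots,a_n)$ of Remark \ref{conditions}; equivalently, the invertible matrix $A$ carries the basis $\cd_n^h$ to $\cb_n^h$, so the latter is a basis as well. I do not expect any genuine obstacle here: all of the real work was done in Lemmas \ref{equal} and \ref{0}, which feed Proposition \ref{transformmatrix}. The only point that deserves a moment's care is confirming that invertibility holds over the localized ring $\Lambda$ itself, and not merely over its field of fractions; this is guaranteed precisely by the unit diagonal of the triangular matrix $A$.
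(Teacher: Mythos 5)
Your proof is correct, and it differs from the paper's own argument in one genuine respect. The paper also starts from Corollary \ref{isabasis} and Proposition \ref{transformmatrix}, but it extracts only \emph{linear independence} of $\cb_n^h$ from the nondegeneracy of the triangular matrix $A$; for \emph{spanning} it falls back on Proposition \ref{generating}, the skein-theoretic fact that crossing-free diagrams generate $\cs(I\times I,n,h)$. You instead observe that $A$ is upper triangular with unit diagonal, hence invertible over $\Lambda$ itself with $A^{-1}$ again defined over $\Lambda$, so that $(D_{\cdots})=A^{-1}(B_{\cdots})$ expresses each $D_{a_1,\ldots,a_n}$ as a $\Lambda$-combination of elements of $\cb_n^h$; spanning and independence then both follow from the standard fact that an invertible matrix over the ground ring carries a basis of a free module to a basis. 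Your route is therefore purely linear-algebraic and self-contained (it never invokes Proposition \ref{generating}), and your insistence on invertibility over the localized ring $\Lambda$ rather than merely over its fraction field is exactly the right point to check, since $\cs(I\times I,n,h)$ is a $\Lambda$-module and not a vector space. What the paper's route buys is that it never needs to discuss $A^{-1}$ at all, at the cost of quoting the earlier skein-theoretic generating result. Note that both arguments silently rely on the unnumbered proposition identifying $\cb_n^h$ with the set $\{B_{a_1,\ldots,a_n}\}$; you use it implicitly the moment you write $(B_{\cdots})=A\,(D_{\cdots})$ and treat the left-hand entries as running over $\cb_n^h$, so it would be worth citing explicitly.
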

\begin{proof}
As we can see in Corollary \ref{isabasis}, $\cd_n^h$ is a basis for $\cs(I\times I,n,h)$.
By Proposition \ref{transformmatrix}, the transformation matrix between $\cb_n^h$ and $\cd_n^h$ is nondegenerate.
Therefore, elements in $\cb_n^h$ are linearly independent.
Moreover, by Proposition \ref{generating}, $\cb_n^h$ is a generating set.
Thus, we have that $\cb_n^h$ is a basis of $\cs(I\times I,n,h)$.
\end{proof}

\section{Proof of Main Result}\label{PMR}
Now we ready to prove our main result, Theorem \ref{main}.
We denote the matrix of $G$ with respect to $\cd^h_n$ by $D$. Proposition \ref{transformmatrix} gives $\det(B)=\det(D)$.
By Corollary \ref{orthogonal} we have $\{D_{a_1,\ldots,a_{2n-1}}\}$ is an orthogonal basis with respect to the bilinear form.
Thus $D$ is a diagonal matrix with respect to this basis. Therefore,
\begin{align*}
\det(D)=\prod_{(a_1,\ldots,a_n)}\langle D_{a_1,\ldots,a_n},D_{a_1,\ldots,a_n}\rangle.
\end{align*}
Thus, by Proposition \ref{basisnorm} we obtain
\begin{align}\label{eqDD1}
\det(D)=\prod_{(a_1,\ldots,a_n)}(\Delta_{a_1}\prod_i\frac{\theta(a_{i+1},a_i,1)}{\Delta_{a_{i+1}}}).
\end{align}
In order to simplify the expression $\det(D)$, we need the following lemma which holds immediately from the definitions.

\begin{lem} We have
$$\frac{\theta(a_{i+1},a_i,1)}{\Delta_{a_{i+1}}}=
\left\{
\begin{array}{l l}
  \frac{\Delta_{a_i}}{\Delta_{a_{i+1}}} & \quad \text{if $a_{i+1}=a_i-1$};\\
  1 & \quad \text{if $a_{i+1}=a_i+1$}.\\
\end{array} \right.$$
\label{simplify}
\end{lem}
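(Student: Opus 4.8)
The plan is to reduce everything to the single evaluation $\theta(c+1,c,1)=\Delta_{c+1}$ and then read off the two cases by substitution. First I record that the two alternatives in the statement are exhaustive: admissibility of the triple $(a_{i+1},a_i,1)$ at the relevant trivalent vertex forces $a_{i+1}+a_i+1$ to be even and $|a_i-1|\le a_{i+1}\le a_i+1$, so necessarily $a_{i+1}=a_i\pm1$, in agreement with Remark \ref{conditions}. I also use that $\theta(a,b,c)$ is symmetric under permuting its three labels, since the underlying theta network is unchanged by such a permutation.

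The heart of the matter is the identity $\theta(c+1,c,1)=\Delta_{c+1}$, which I would establish directly from the defining properties of the Jones--Wenzl idempotents in Proposition \ref{JWI}. Because the number of strands running directly between the edge colored $c$ and the edge colored $1$ is $\tfrac{c+1-(c+1)}{2}=0$, the trivalent vertex is a plain splitting of the $c+1$ strands carrying $f_{c+1}$ into a bundle of $c$ strands (on which $f_c$ sits) and a single strand, with no turnbacks. Hence the closed theta network equals the Markov closure of $f_{c+1}(f_c\otimes\mathrm{id}_1)$ in $TL_{c+1}$, where $f_c\otimes\mathrm{id}_1$ denotes $f_c$ placed on $c$ of the $c+1$ strands. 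Writing $f_c\otimes\mathrm{id}_1=1+x$ with $x$ in the subalgebra generated by $e_1,\ldots,e_{c-1}$ (property (2)) and using $f_{c+1}e_i=0$ for $1\le i\le c$ (property (1)), I obtain $f_{c+1}(f_c\otimes\mathrm{id}_1)=f_{c+1}$; closing up $f_{c+1}$ then gives $\Delta_{c+1}$ by the definition of $\Delta_{c+1}$ recorded in the remark after Proposition \ref{JWI}.

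With this identity in hand the lemma follows by substitution. When $a_{i+1}=a_i+1$, taking $c=a_i$ gives $\theta(a_{i+1},a_i,1)=\Delta_{a_i+1}=\Delta_{a_{i+1}}$, so the quotient $\theta(a_{i+1},a_i,1)/\Delta_{a_{i+1}}$ equals $1$. When $a_{i+1}=a_i-1$, I first use symmetry to rewrite $\theta(a_{i+1},a_i,1)=\theta(a_i,a_{i+1},1)$ and then apply the identity with $c=a_{i+1}=a_i-1$, obtaining $\theta(a_{i+1},a_i,1)=\Delta_{a_i}$, so the quotient equals $\Delta_{a_i}/\Delta_{a_{i+1}}$. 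This is exactly the claimed case distinction.

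The step I expect to be the main obstacle is the normalization bookkeeping in the identity $\theta(c+1,c,1)=\Delta_{c+1}$: one must be sure that viewing the trivalent vertex as a plain splitting of $f_{c+1}$ introduces no spurious scalar, which is precisely what the vanishing of the internal label between the $c$-edge and the $1$-edge guarantees. As an independent check I would substitute the two triples into the general Kauffman--Lins closed form for $\theta$ in terms of quantum factorials; both cases then reduce, after cancellation, to the same values $\Delta_{a_{i+1}}$ and $\Delta_{a_i}$ respectively, confirming the signs and conventions.
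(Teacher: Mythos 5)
Your proof is correct, and in all its details: the parity/admissibility argument showing $a_{i+1}=a_i\pm 1$ is exhaustive, the absorption identity $f_{c+1}(f_c\otimes\mathrm{id}_1)=f_{c+1}$ (valid because the internal label between the $c$-edge and the $1$-edge vanishes, so the vertex has no turnbacks), and the symmetry of $\theta$ all hold, giving $\theta(c+1,c,1)=\Delta_{c+1}$ and hence both cases of the lemma. The paper offers no argument at all---it states that the lemma ``holds immediately from the definitions''---and your computation is precisely the standard skein-theoretic verification being implicitly invoked, so there is nothing to contrast.
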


Lemma \ref{simplify} and \eqref{eqDD1} give
\begin{align*}
\det(D)=\Delta_h^{|\cd_n^h|}\prod_{a_1,\ldots,a_n}(\frac{\Delta_k}{\Delta_{k-1}})^{\alpha^k_{(n,h)}},
\end{align*}
where $\alpha^k_{(n,h)}$ is the number of times that $\frac{\theta(a_{i+1},a_i,1)}{\Delta_{a_{i+1}}}=\frac{\Delta_k}{\Delta_{k-1}}$.
Hence, our problem is reduced to count the number of all such $\frac{\theta(a_{i+1},a_i,1)}{\Delta_{a_{i+1}}}$'s, for a fixed $k$.
To do so, we need the combinatorial structure.

\begin{de}
A lattice path in the plane is a path from $(0,0)$ to $(n,h)$ with northeast and southeast unit steps,
where $n\in\bn,$ and $h\in\bz$.
A generalized Dyck path is a lattice path that never goes below the $x$-axis. 
We denote the set of all generalized Dyck paths from $(0,0)$ to $(n,h)$ by $\cd_{(n,h)}$. We define $a$-shifted generalized Dyck path to be generalized Dyck path $D$ such that we map each point $(x,y)$ of $D$ to $(x+a,y+a)$.
A Dyck path is a generalized Dyck path from $(0,0)$ to $(n,0)$.
\end{de}

\begin{figure}[htp]
\centering
\includegraphics[width=0.8in,height=1in]{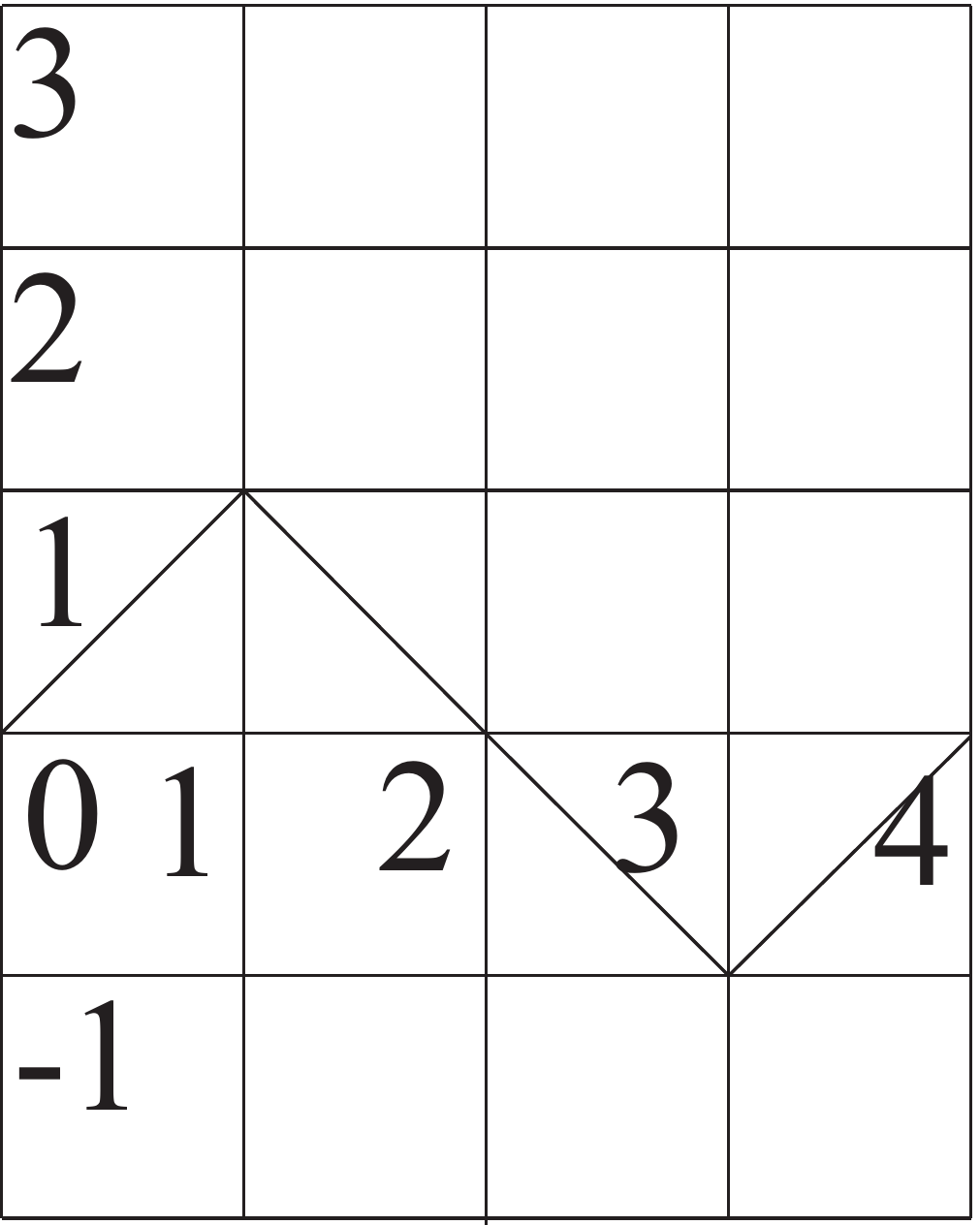}\hskip 0.5in
\includegraphics[width=0.8in,height=1in]{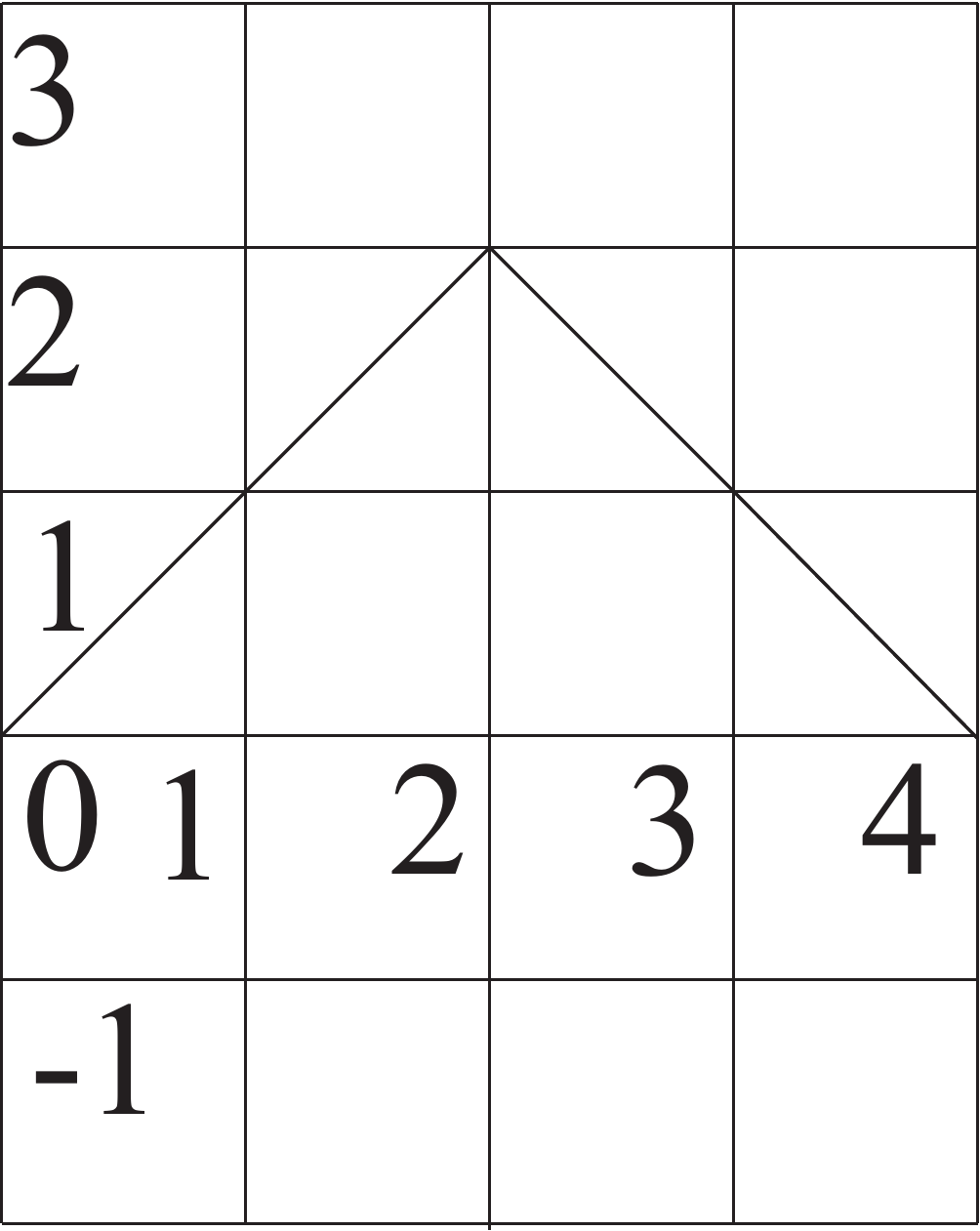}
\caption{On the left is a lattice path from $(0,0)$ to $(4,0)$, and on the right is a generalized Dyck path from $(0,0)$ to $(4,0)$}
\label{path}
\end{figure}

As we can see there is a $1-1$ correspondence between $\cd_n^h$ and $n$-tuples $\{(a_1,\ldots,a_n)\}$ satisfying the conditions in Remark \ref{conditions}.
Note that there is a $1-1$ correspondence between the $n$-tuples $\{(a_1,\ldots,a_n)\}$ and $(n+1)$-tuples $\{(0,a_1,\ldots,a_n)\}$.
Therefore, there is a $1-1$ correspondence between the $(n+1)$-tuples $\{(0,a_1,\ldots,a_n)\}$ and $\cd_{(n,h)}$,
for any $(a_1,\ldots,a_n)$ satisfying the conditions in Remark \ref{conditions}.
Hence, there is a $1-1$ correspondence between the sets $\cd_n^h$ and $\cd_{(n,h)}$,
that is, $|\cd_n^h|=|\cd_{(n,h)}|$.

\begin{de}
A $k$-down step in a generalized Dyck path is a southeast step from height $k$ to height $k-1$, see Figure \ref{downstep}.
\end{de}

\begin{figure}[h]
\centering
\includegraphics[width=1in,height=1in]{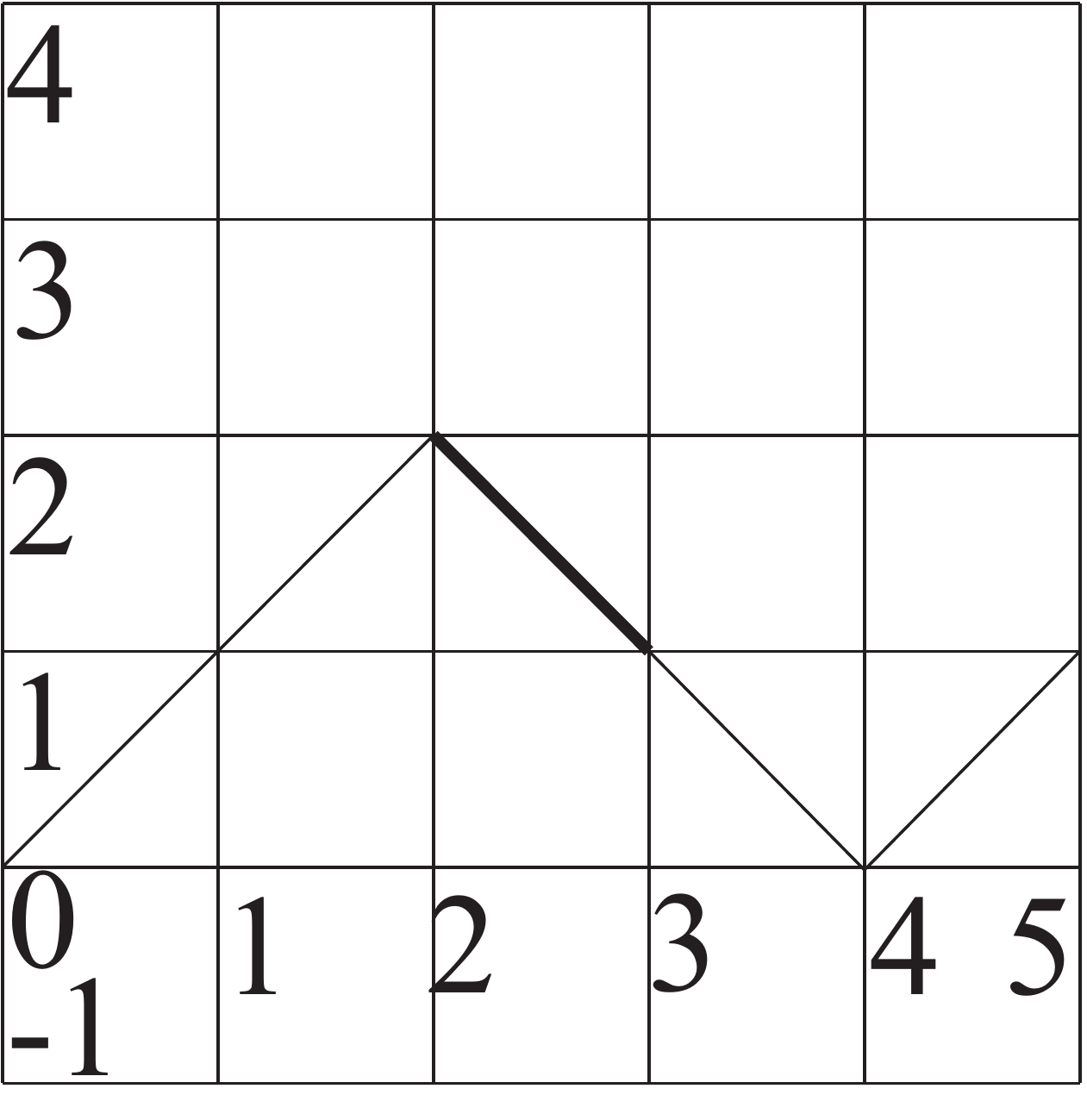}
\caption{A 2-down step}
\label{downstep}
\end{figure}

Then the problem of counting all $\frac{\theta(a_{i+1},a_i,1)}{\Delta_{a_{i+1}}}=\frac{\Delta_k}{\Delta_{k-1}}$ in $\{(a_1,\ldots,a_n)\}$ is equivalent to count all pairs $(D,i)$, where $D\in\cd_{(n,h)}$ and $a_i=k, a_{i+1}=k-1$.
We denote set of these pairs by $A_{(n,h)}^k$.
Geometrically, each pair $(D,i)$ corresponds to a $k$-down step in the generalized Dyck path $D$.
Counting all pair $(D,i)$ is the same as counting all $k$-down step in all generalized Dyck path in $\cd_{(n,h)}$.
The aim of the next section is to count all this kind of steps.

\section{A combinatorial result}\label{ACR}
In this section, we are going to prove the following result.

\begin{thm}\label{combinatorialresult}
For all $n$,
\begin{equation}
\left|A_{(n,h)}^k\right|=\alpha^k_{(n,h)}=\binom{n}{\frac{n+h+2k-2s}{2}}-\binom{n}{\frac{n+h+2k}{2}+1},
\notag
\end{equation}
where $s=\min\{k-1,h\}$.
\end{thm}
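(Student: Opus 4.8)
The plan is to count the pairs $(D,i)\in A_{(n,h)}^k$ by summing over all possible positions where a $k$-down step can occur. A $k$-down step at position $i$ means the path passes through the point $(i-1,k)$ and then through $(i,k-1)$. So a path containing such a step factors as a concatenation: a generalized Dyck path from $(0,0)$ to $(i-1,k)$, followed by the fixed down-step to $(i,k-1)$, followed by an arbitrary generalized Dyck path from $(i,k-1)$ to $(n,h)$. Since the step itself is forced, the number of paths through a fixed $k$-down step at horizontal position $i$ is the product $|\cd_{(i-1,k)}|\cdot|\cd_{(n-i,\,h-k+1)}|$, where the second factor counts paths from $(i,k-1)$ to $(n,h)$ (a generalized Dyck path reaching net height $h-(k-1)$ in $n-i$ steps, after shifting the baseline down by $k-1$). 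Summing over all valid $i$ gives $\left|A_{(n,h)}^k\right|=\sum_i |\cd_{(i-1,k)}|\cdot|\cd_{(n-i,\,h-k+1)}|$.

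The key input is a closed formula for $|\cd_{(m,\ell)}|$, the number of generalized Dyck paths from $(0,0)$ to $(m,\ell)$. By the reflection principle (the ballot-problem / cycle-lemma count), this is the number of lattice paths with $m$ unit steps ending at height $\ell$ that never dip below the $x$-axis, namely $|\cd_{(m,\ell)}|=\binom{m}{\frac{m+\ell}{2}}-\binom{m}{\frac{m+\ell}{2}+1}$ when $m+\ell$ is even and nonnegative, and $0$ otherwise. First I would establish this formula, then substitute it into the convolution sum above.

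The main obstacle, and where the real work lies, is evaluating the resulting convolution sum in closed form to match the stated answer $\binom{n}{\frac{n+h+2k-2s}{2}}-\binom{n}{\frac{n+h+2k}{2}+1}$ with $s=\min\{k-1,h\}$. The appearance of $s=\min\{k-1,h\}$ signals that the sum behaves differently depending on whether $k-1\le h$ or $k-1>h$; geometrically, this reflects whether the intermediate height $k$ is forced to be attained ``early'' relative to the terminal height $h$. I expect the cleanest route is the geometric reflection argument that the authors attribute to Di Francesco: rather than summing the convolution directly, I would set up a bijection-plus-reflection that counts $k$-down steps by reflecting the terminal portion of each path across the line $y=k-1$, turning the count of marked down-steps into a difference of two unconstrained binomial path counts. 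The telescoping in the convolution of Catalan-type differences should collapse to a single difference of binomials, and the boundary case in the reflection (paths that would reflect below the axis) is exactly what produces the $\min\{k-1,h\}$ correction. The second, generating-function approach the authors promise would instead encode $\sum_m |\cd_{(m,\ell)}|x^m$ via the Catalan generating function and extract the coefficient, which bypasses the case analysis at the cost of a residue or Lagrange-inversion computation.
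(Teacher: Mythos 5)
Your setup contains a concrete error that breaks the counting identity before the hard part even begins. In your factorization of a path with a marked $k$-down step at position $i$, the first factor $|\cd_{(i-1,k)}|$ is right, but the second is not: the continuation from $(i,k-1)$ to $(n,h)$ is only required to stay above the $x$-axis, which lies $k-1$ units \emph{below} its starting height, whereas $|\cd_{(n-i,\,h-k+1)}|$ counts paths forbidden to dip below their own starting level. ``Shifting the baseline down by $k-1$'' moves the floor constraint along with the path, so for $k\geq 2$ your product undercounts, missing every continuation that dips below height $k-1$ while staying nonnegative. The failure is stark when $k\geq h+2$: then $h-k+1<0$, so $\cd_{(n-i,\,h-k+1)}=\emptyset$ and your formula returns $0$; yet for $n=4$, $h=0$, $k=2$ the path $uudd$ visibly contains a $2$-down step. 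The correct second factor, by reflection across $y=-1$, is $\binom{n-i}{\frac{(n-i)+h-k+1}{2}}-\binom{n-i}{\frac{(n-i)+h+k+1}{2}}$, and with it your sum becomes a convolution of ballot-type differences that you would still have to evaluate.

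That evaluation is where the theorem actually lives, and the proposal stops there: you assert the convolution ``should collapse'' and otherwise defer to a reflection bijection described in one sentence. The paper's geometric proof is not the simple ``reflect the terminal portion across $y=k-1$'' you suggest; it is a three-piece surgery. Given $(D,i)$, one cuts $D$ at the last ascent from $k-1$ to $k$ preceding position $i$, cuts the right-hand piece again at its leftmost minimum $(l,j)$ --- where one checks $0\leq j\leq s=\min\{k-1,h\}$, which is the true origin of $s$ (it bounds the minimum height of the tail, not a ``boundary case of reflection'') --- and then reflects-and-shifts the two right pieces to land in $\cd_{(n,2k-2j+h)}$. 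Proving this map is a bijection onto $\bigcup_{j=0}^{s}\cd_{(n,2k-2j+h)}$ requires constructing the inverse surgery and verifying both composites are the identity; only then does the stated formula drop out as the telescoping sum of the reflection-principle counts $|\cd_{(n,2k-2j+h)}|$. The paper's second proof is, in effect, the rigorous version of your convolution idea: it encodes the counts in generating functions $C_{k,h}(x,q)$, solves the recurrences via Chebyshev polynomials of the second kind, differentiates at $q=1$, and extracts coefficients, splitting into the cases $k>h$ and $k\leq h$ that produce the $\min\{k-1,h\}$. As written, your proposal has neither a correct counting identity nor a completed argument for the closed form.
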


We prove this theorem by using two different approaches.
In Section \ref{subsection1}, we present a combinatorial and geometric explanation,
which was inspired by Di Francesco's proof for the case $h=0$, see \cite[Proposition 2]{F}.
In Section \ref{subsection2},
we present our second approach which is based on the generating function techniques,
which provides an alternative proof of Di Francesco's Proposition 2 in \cite{F}.

\subsection{A geometric proof}\label{subsection1}
In this section, we will construct two maps:
\begin{equation}
\Theta:A^{k}_{(n,h)}\rightarrow\bigcup_{j=0}^{s}\cd_{(n,2k-2j+h)}\label{theta}
\end{equation}
and
\begin{equation}
\Phi:\bigcup_{j=0}^{s}\cd_{(n,2k-2j+h)}\rightarrow A^{k}_{(n,h)},\label{phi}
\end{equation}
where $s=\min\{k-1,h\}$.
We will prove that $\Phi\Theta=id$ and $\Theta\Phi=id$. 
Thus, both of them are bijective.
By the reflection principle, we know that
\begin{equation}
|{\cd_{(n,2k-2j+h)}}|=\binom{n}{\frac{n+2k-2j+h}{2}}-\binom{n}{\frac{n+2k-2j+h}{2}+1}.
\label{cardofdyckpath}
\end{equation}
Therefore,
\begin{align*}
\left|\bigcup_{j=0}^{s}\cd_{(n,2k-2j+h)}\right|&=\sum_{j=0}^{s}{\binom{n}{\frac{n+2k-2j+h}{2}}-\binom{n}{\frac{n+2k-2j+h}{2}+1}}\\
&=\binom{n}{\frac{n+h+2k-2s}{2}}-\binom{n}{\frac{n+h+2k}{2}+1}.
\end{align*}
Then we have
\begin{equation}
\left|A_{(n,h)}^k\right|=\binom{n}{\frac{n+h+2k-2s}{2}}-\binom{n}{\frac{n+h+2k}{2}+1}.
\end{equation}

\begin{figure}[htp]
\centering
\includegraphics[width=1in,height=1.2in]{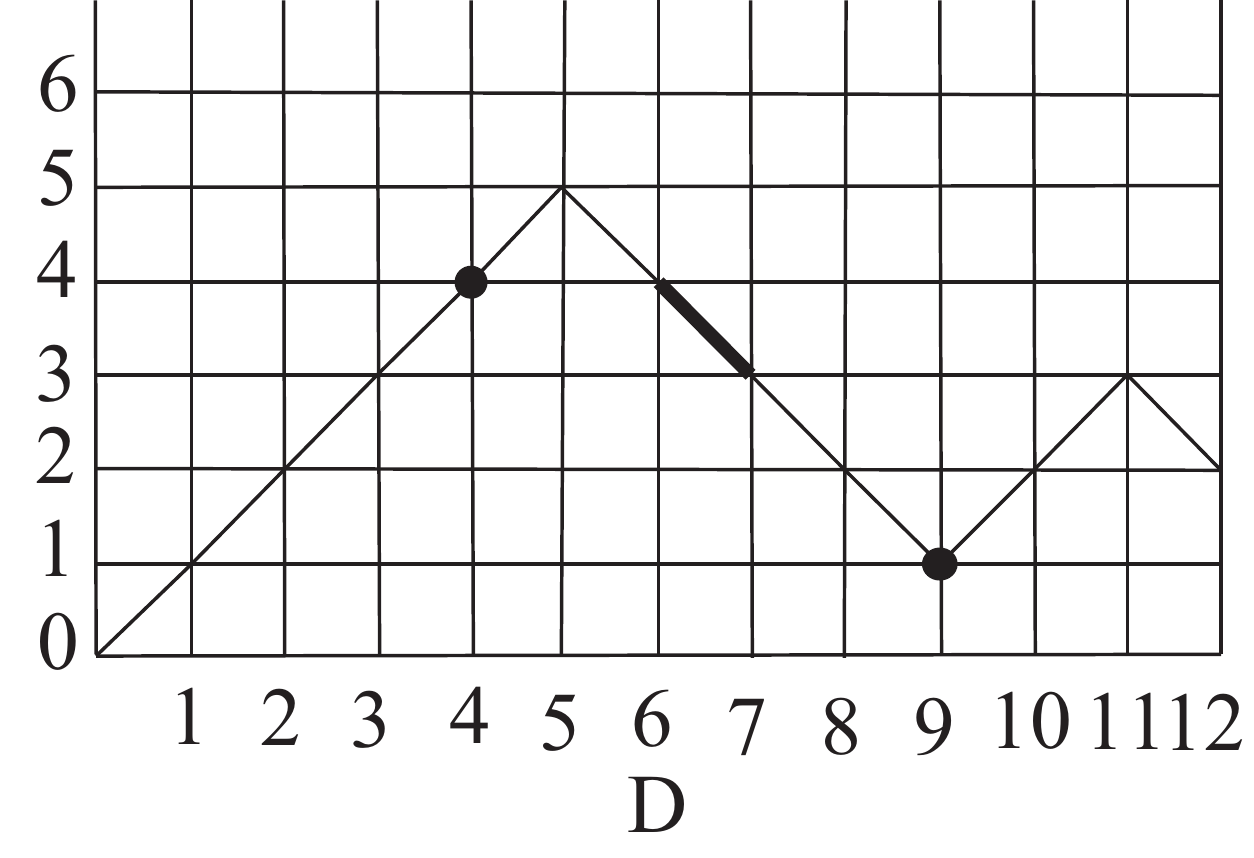}
\includegraphics[width=0.5in,height=1.2in]{arrow.pdf}
\includegraphics[width=1.2in,height=1.2in]{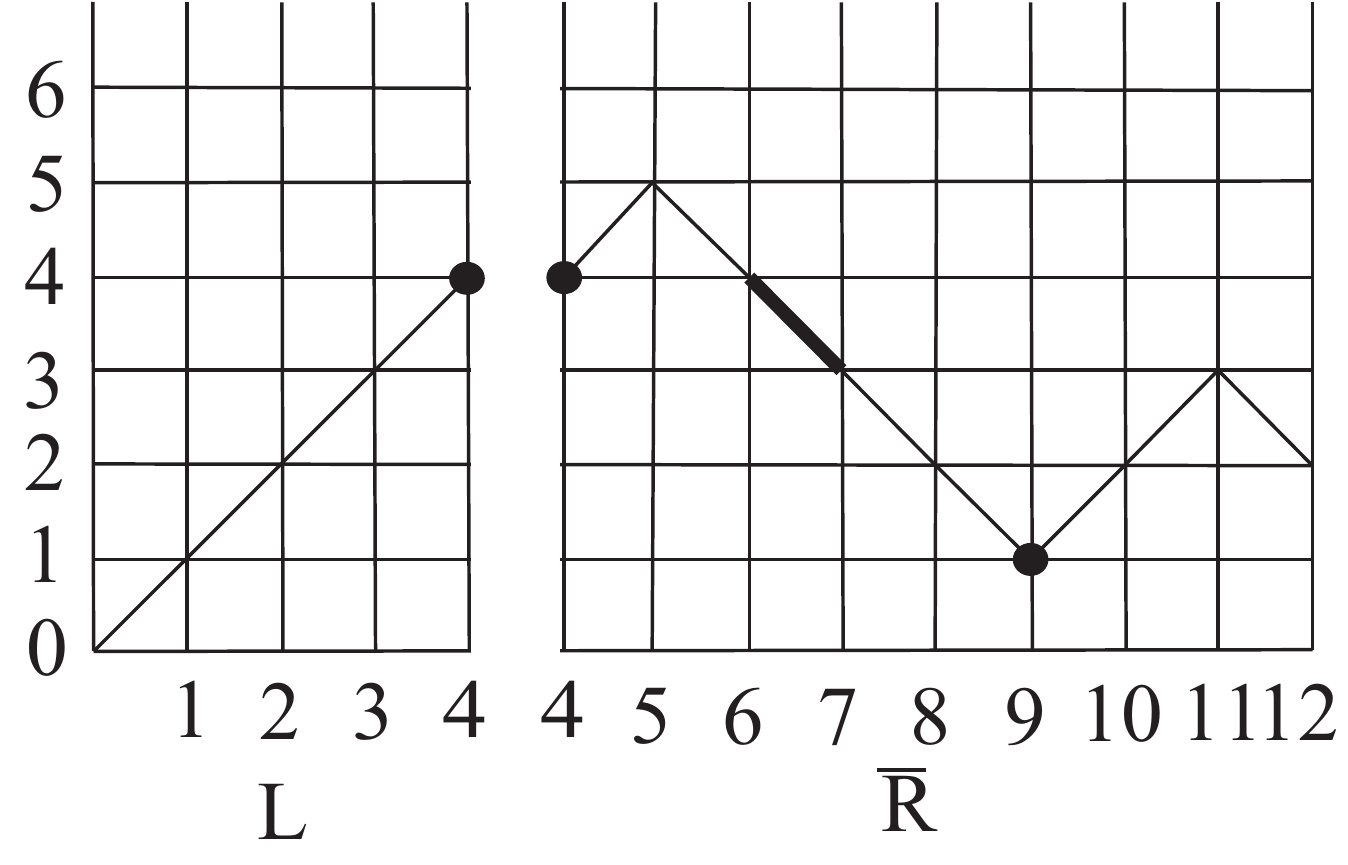}
\includegraphics[width=0.5in, height=1.2in]{arrow.pdf}
\includegraphics[width=1.4in,height=1.2in]{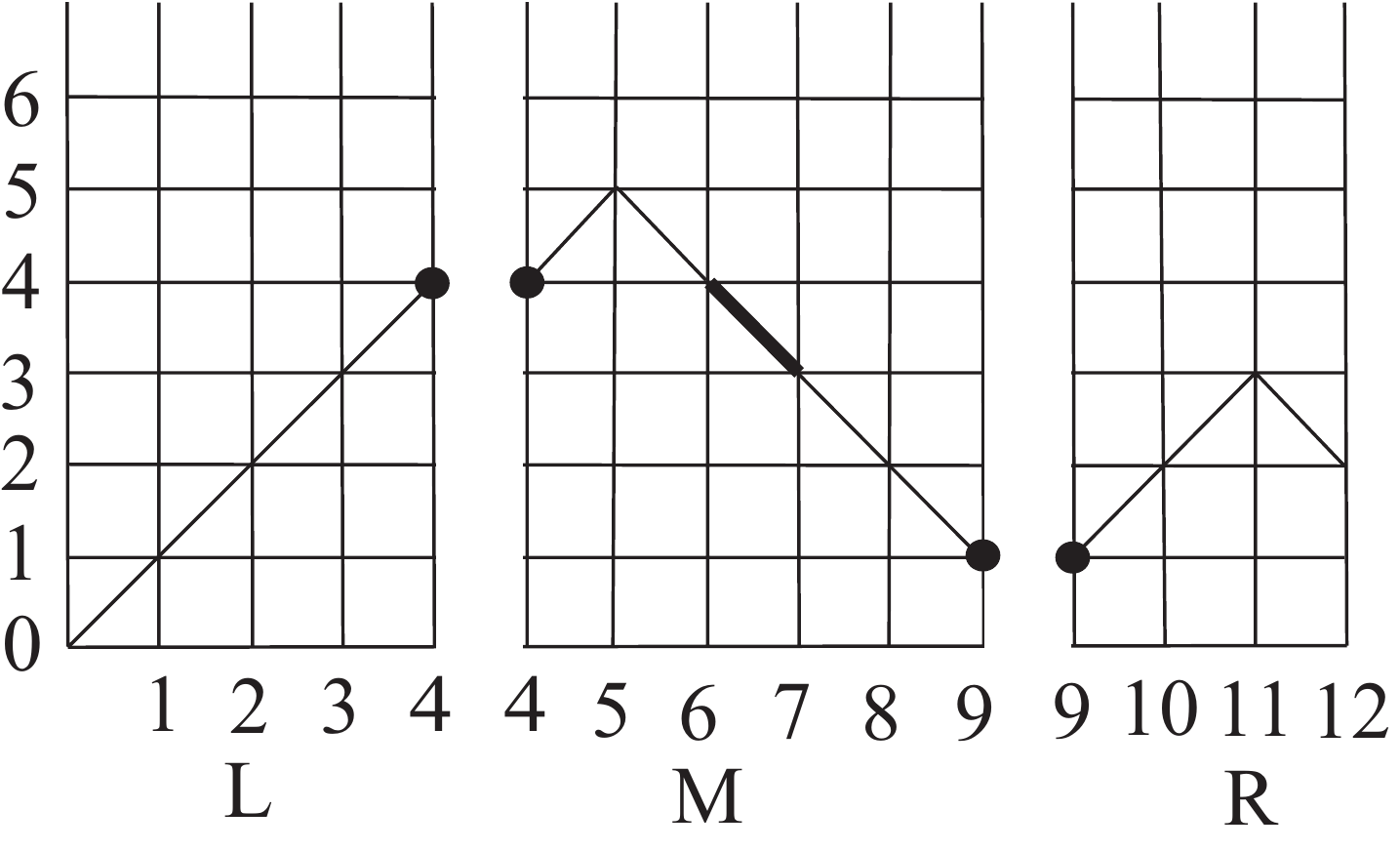}
\caption{The cutting process, where $k=4, n=12, h=2, i=6, l=9$. The bold step is the one we are considering. Since $a_5=5$, we choose $i'=4$. We cut the path at the 4th place into $L$ and $\bar{R}$. Now, the rightmost lowest point in $\bar{R}$ is $(9,1)$, so we cut $\bar{R}$ into two parts $M$ and $R$ at $(9,1)$}\label{cutting}
\end{figure}

{\em\bf Step 1: Construct $\Theta$.} 
Suppose we have a $k$-down step occurring at the $i$th place in the Dyck path $D$. 
We denote this $k$-down step by $(D,i)$.
We cut $D$ into 3 parts as follows (see Figure \ref{cutting}):
\begin{enumerate}
\item We choose the largest $i'\leq i$ such that $a_{i'-1}=k-1$ and $a_{i'}=k$,
we cut the path at $(i',a_{i'})$. We denote the left part by $L$,
the right part by $\bar{R}$.
\item Now we consider the right part $\bar{R}$.
Suppose the lowest height of $\bar{R}$ is $j$ and $(l,j)$ is the lefttmost lowest point of $\bar{R}$.
Then we cut $\bar{R}$ at $(l,j)$ into 2 parts. We denote
the left part of $\bar{R}$ by $M$ and the right part of $\bar{R}$  by $R$.
One may check that $0\leq j\leq s$.
\end{enumerate}
Now the path can be considered as a union of 3 parts $L,M$ and $R$.
We do some operations on $L,M,R$ and glue them back as follows (see Figure \ref{glueback}):
\begin{enumerate}
\item We reflect $R$ with respect to $y$-axis and shift it down by $h-j$ units.
We denote the resulting part by $rR$.
The lowest point of $rR$ is $(n,2j-h)$.
Then by gluing the starting point of $rR$ to the endpoint of $M$,
we get two parts again, $L$ and $M\cup rR$.
It is easy to see that the lowest point of $M\cup rR$ is $(n,2j-h)$.
\item Now we reflect $M\cup rR$ with respect to the $y$-axis and shift it up by $k-2j+h$ units. Then we glue it back to the end of $L$.
\end{enumerate}
\begin{figure}[htp]
\centering
\includegraphics[width=1.4in, height=1.2in]{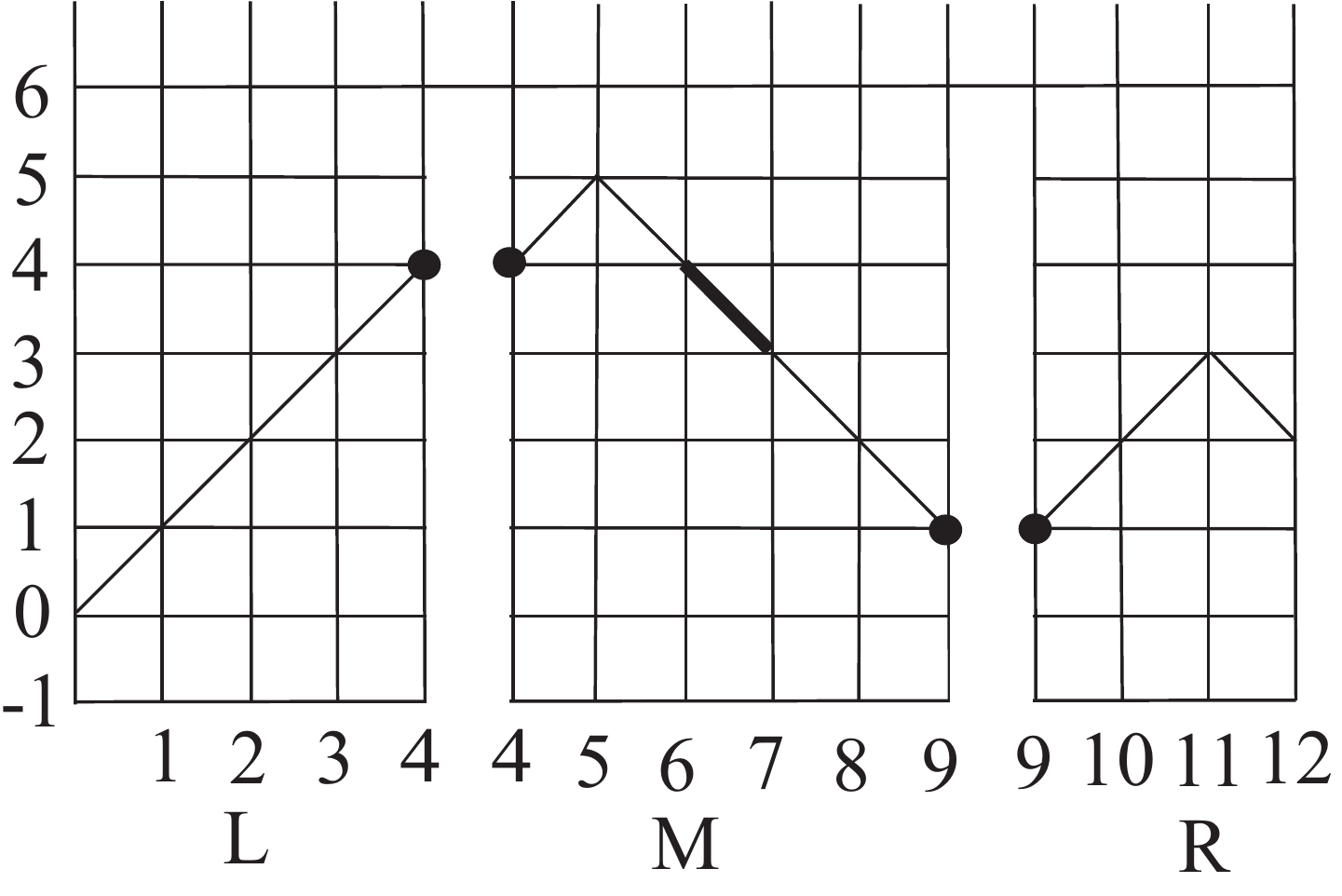}
\includegraphics[width=0.5in, height=1.2in]{arrow.pdf}
\includegraphics[width=1.2in, height=1.2in]{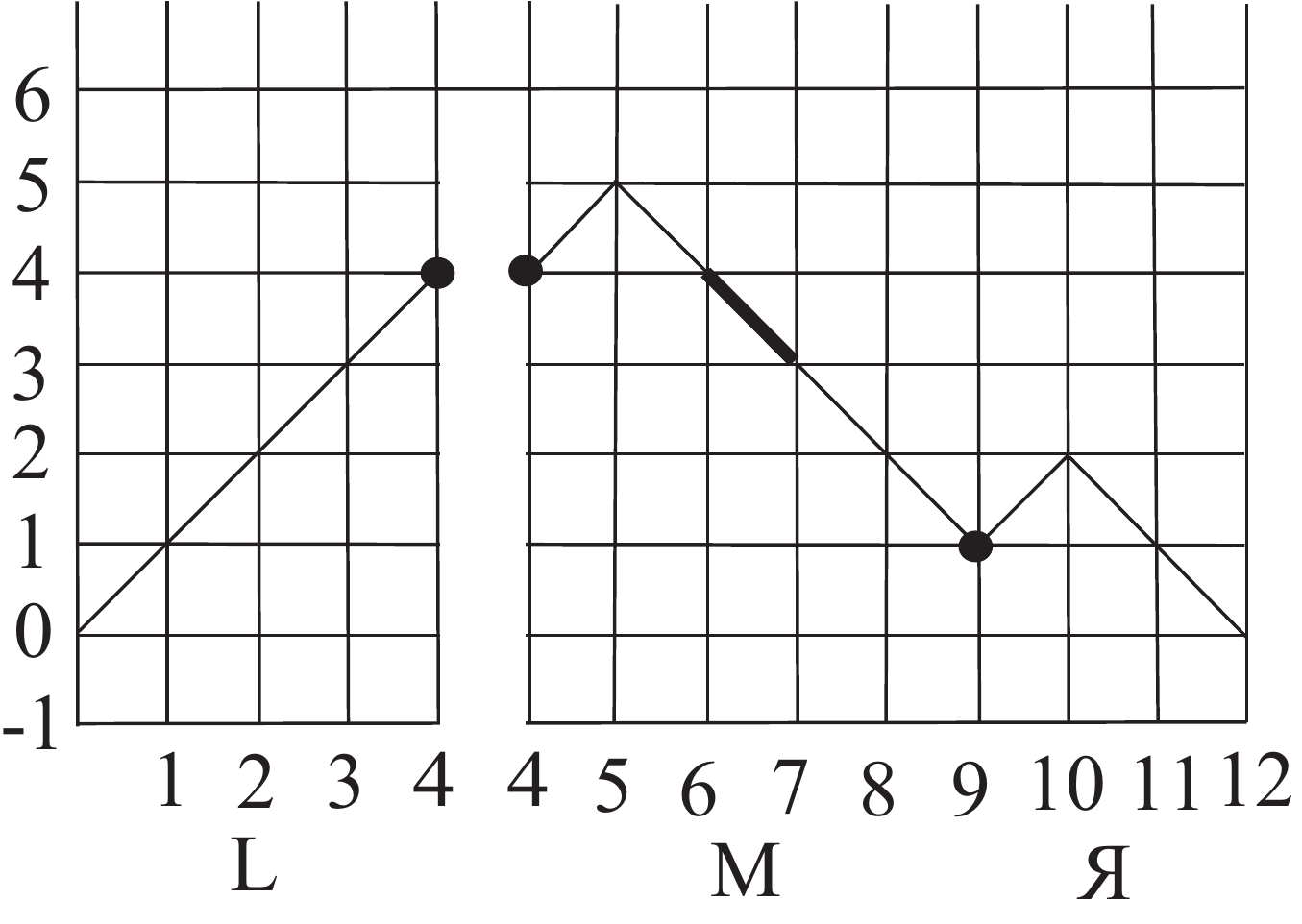}
\includegraphics[width=0.5in, height=1.2in]{arrow.pdf}
\includegraphics[width=1in, height=1.2in]{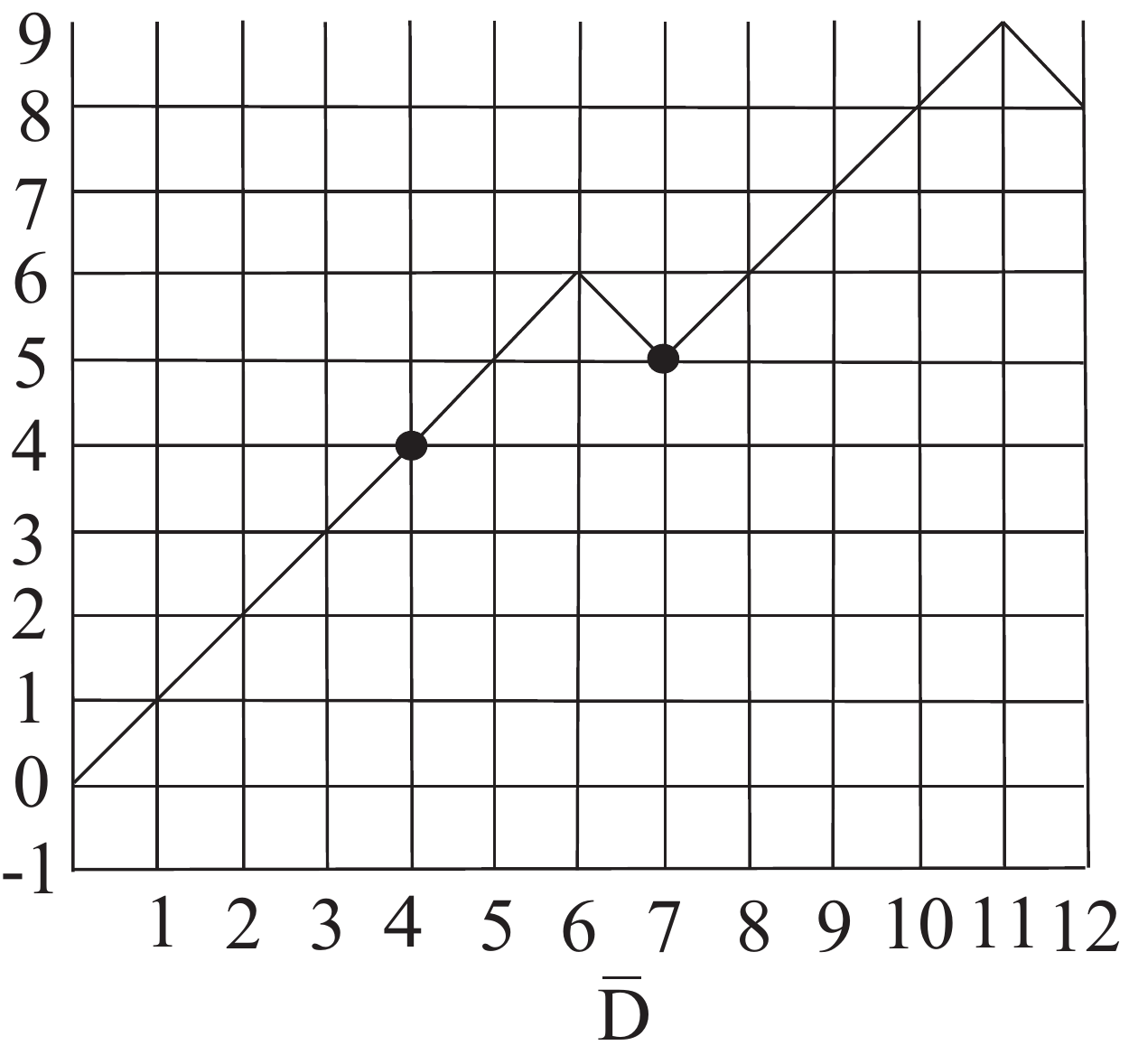}
\caption{The glue-back process. We reflect and shift $R$,  we glue it back to $M$, then we reflect and shift $M\cup rR$ and glue it back to $L$}
\label{glueback}
\end{figure}
At the end, it is easy to see that we have a path $\bar{D}\in\cd_{(n,2k-2j+h)}$.
We set $\Theta((D,i))=\bar{D}$, therefore we have established a map
\begin{equation}
\Theta:A^{k}_{(n,h)}\rightarrow\bigcup_{j=0}^{s}\cd_{(n,2k-2j+h)}.\notag
\end{equation}

{\em\bf Step 2: Construct $\Phi$.}
Basically, this is the reverse process of $\Theta$.
Readers can go through Figure \ref{cutting} and Figure \ref{glueback} backward.
Suppose we have a path $\bar{D}\in\bigcup_{j=0}^{s}\cd_{(n,2k-2j+h)}$.
The whole process is as follows:
\begin{enumerate}
\item Since the end of $\bar{D}$ is $2k-2j+h>k$, where $0\leq j\leq s$,
there is at least one $t$ such that $a_{t-1}=k-1, a_t=k$ and $a_{t+1}=k+1$.
We choose the largest such $t$, denoted by $m$, and cut the path $\bar{D}$ at $(m,a_m)$ into two parts $L$ and $r\bar{R}$.
\item By reflecting $r\bar{R}$ and shift it down by $k-2j+h$ units,
we obtain a new part $\bar{R}$.
The endpoint of $\bar{R}$ is $(n,2j-h)$.
\item It is easy to see that $y=j$ must intersect $\bar{R}$,
since $j$ is always between $k$ and $2j-h$.
We choose the leftmost point of $\{y=j\}\cap\bar{R}$ and denote it by $(v,a_v)$.
We cut $\bar{R}$ into 2 parts $M$ and $rR$ at $(v,a_v)$.
\item We reflect $rR$ with respect to the $y$-axis and shift it up by $h-j$ units. We denote the resulting part by $R$.
At the end, we glue the starting point of $M$ to the endpoint of $L$ and glue the beginning point of $R$ to the endpoint of $M$.
\end{enumerate}
Now we get a Dyck path $D$ from $(0,0)$ to $(n,h)$.
We choose the smallest $m'>m$ such that $a_{m'}=k$ and $a_{m'+1}=k-1$.
We set $\Phi(\bar{D})=(D,m')$. Thus, we have established a map
\begin{equation}
\Phi:\bigcup_{j=0}^{s}\cd_{(n,2k-2j+h)}\rightarrow A^{k}_{(n,h)}.\notag
\end{equation}

\begin{prop}
We have
$\Phi\Theta=id$ and $\Theta\Phi=id$.
\end{prop}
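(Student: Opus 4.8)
The plan is to exploit the fact that each of $\Theta$ and $\Phi$ is a composition of elementary moves: a cut at a distinguished point, a reflection of a piece about a vertical line, a vertical shift, and a regluing, followed by a greedy rule that selects the output index. Since a reflection about a fixed line is an involution and a shift up by $c$ units is inverse to a shift down by $c$ units, the geometric moves of $\Phi$ invert those of $\Theta$ automatically, \emph{provided} one knows that the two procedures act on the same three pieces $L$, $M$, $R$. Hence the real content of the proposition is the verification that the extremal selection rules match up. I would track three pieces of data through both composites: the pieces $L,M,R$ themselves, the minimal height $j$ of $\bar R$ (equivalently the index $j$ of the component $\cd_{(n,2k-2j+h)}$), and the endpoint height $2k-2j+h$. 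Writing a path as its height sequence $0=a_0,a_1,\ldots,a_n$, the three selections to reconcile are: the cut index $m$ chosen by $\Phi$ must recover the index $i'$ used by $\Theta$; the intermediate cut point $(v,a_v)$ must recover the leftmost lowest point $(l,j)$ of $\bar R$; and the returned index $m'$ must equal the original $i$.

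First I would prove $\Phi\Theta=\mathrm{id}$. Starting from $(D,i)\in A^{k}_{(n,h)}$, I run $\Theta$ (Figures \ref{cutting} and \ref{glueback}) to obtain $\bar D\in\cd_{(n,2k-2j+h)}$ and examine the seam point $(i',k)$ at which $L$ was severed. Its incoming step (the last step of the unchanged $L$) rises from $k-1$ to $k$, and its outgoing step (the first step of the reflected, up-shifted block $M\cup rR$) rises from $k$ to $k+1$, so $\bar D$ exhibits the pattern $a_{i'-1}=k-1,\ a_{i'}=k,\ a_{i'+1}=k+1$ precisely at $i'$. The key local claim is that $i'$ is the \emph{largest} index with this pattern: a later occurrence inside the reglued block would, under the order-reversing, $U\!\leftrightarrow\! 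D$-swapping reflection, correspond to the block descending strictly below its own minimal height $2j-h$, which is impossible. This forces $\Phi$ to take $m=i'$ and to cut $\bar D$ exactly where $\Theta$ glued; the reflect-and-shift operations of $\Phi$ then undo those of $\Theta$ piece by piece and return the path $D$. Finally, $m'=i$: the maximality of $i'$ means there is no ascending step into height $k$ between $i'$ and $i$, and since reaching height $k$ before any $k$-down step requires such an ascent, no $k$-down step can occur strictly between $i'$ and $i$, so $i$ is the first one after $m=i'$.

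Next I would prove $\Theta\Phi=\mathrm{id}$ by the symmetric argument. Starting from $\bar D\in\cd_{(n,2k-2j+h)}$, the value $j$ is read off from the endpoint via $j=(2k+h-\text{endpoint})/2$, so the downward shift in $\Phi$ is determined; running $\Phi$ produces $(D,m')$. Applying $\Theta$, the crucial recoveries are that the largest index $i'\le m'$ with an ascending step into height $k$ is exactly the cut index $m$, and that the leftmost lowest point of the reconstructed $\bar R$ lies at height $j$, matching the component index from which $\bar D$ was drawn. With $i'=m$ and the height-$j$ point recovered, the two families of geometric moves cancel and $\bar D$ is returned.

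I expect the main obstacle to be precisely the bookkeeping that guarantees the extremal selections coincide: the claims that the largest-$t$ rule of $\Phi$ lands on $i'$, that the leftmost-point-at-height-$j$ rule recovers $(l,j)$, and that the smallest-$m'$ rule returns $i$. The danger is that a reflect-and-shift step might introduce a spurious candidate—a new pattern $a_{t-1}=k-1,a_t=k,a_{t+1}=k+1$, a new point of height $j$, or an earlier $k$-down step—that would divert a greedy choice. Ruling these out requires a height analysis of each reglued piece: the block $M\cup rR$ touches its minimum $2j-h$ only at its gluing end, and $M$ meets height $j$ only at $(l,j)$ by the leftmost-lowest-point choice. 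Once these monotonicity-at-the-seam statements are in place, both composites collapse to the involutivity of reflection and the cancellation of the shifts, and the proposition follows.
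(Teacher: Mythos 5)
Your strategy coincides with the paper's: prove $\Phi\Theta=\mathrm{id}$ by checking that the three greedy selections made by $\Phi$ recover the data of $\Theta$ --- the cut index $m$ equals $i'$ (because the reglued block $M\cup rR$, once reflected and shifted up by $k-2j+h$, has minimum height exactly $k$, so the pattern $a_{t-1}=k-1,\ a_t=k,\ a_{t+1}=k+1$ cannot occur inside it), the leftmost height-$j$ point of the reconstituted middle-plus-right block is $(l,j)$, and $m'=i$ --- and then treat $\Theta\Phi=\mathrm{id}$ symmetrically. This is exactly the paper's proof, written with somewhat more justification than the paper itself supplies (the paper simply asserts $m'=i$ and declares the second composite ``similar'').

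There is, however, one false step in your justification that $m'=i$. You argue that no $k$-down step can occur strictly between $i'$ and $i$ because ``reaching height $k$ before any $k$-down step requires an ascending step into height $k$.'' That implication is not true: a path can arrive at height $k$ by descending from $k+1$. Locally nothing forbids $a_{i'}=k,\ a_{i'+1}=k+1,\ \ldots,\ a_t=k,\ a_{t+1}=k-1$ with $i'<t<i$, where the $k$-down step at $t$ is preceded by a descent into $k$ and by no ascent into $k$ after $i'$. The contradiction must be extracted \emph{after} the putative down step, not before it: if such a $t$ existed, then $a_{t+1}=k-1$ while $a_i=k$, so the path would have to ascend into height $k$ at some index in the interval $(t+1,i]\subset(i',i]$, contradicting the maximality of $i'$. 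With that one-sentence repair your argument is complete. A smaller point, which affects the paper as much as your write-up: the selection rule for $m'$ has to be read as ``smallest $m'\geq m$'' rather than ``smallest $m'>m$'', because in the degenerate case $a_{i-1}=k-1,\ a_i=k,\ a_{i+1}=k-1$ one has $i'=i=m$, and the down step to be recovered sits at the seam itself; under the strict rule $\Phi$ would skip it (or be undefined), and $\Phi\Theta=\mathrm{id}$ would fail on such pairs.
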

\begin{proof}
Here, we give the detailed proof of the first part of the proposition. 
The second part is similar.
We need to verify that
\begin{equation}
\Phi\Theta:A^{k}_{(n,h)}\rightarrow A^{k}_{(n,h)}
\label{verification}
\end{equation}
is the identity on $A^{k}_{(n,h)}$.
Let us choose an element $(D,i)\in A^{k}_{(n,h)}$, that is, the $k$-down step happens at the $i$th place in the Dyck path $D\in\cd_{(n,h)}$.
We now apply the process of $\Theta$ to $(D,i)$, we get $\bar{D}$, 
see Figure \ref{verification}.
\begin{figure}[h]
\centering
\includegraphics[width=1in, height=1.2in]{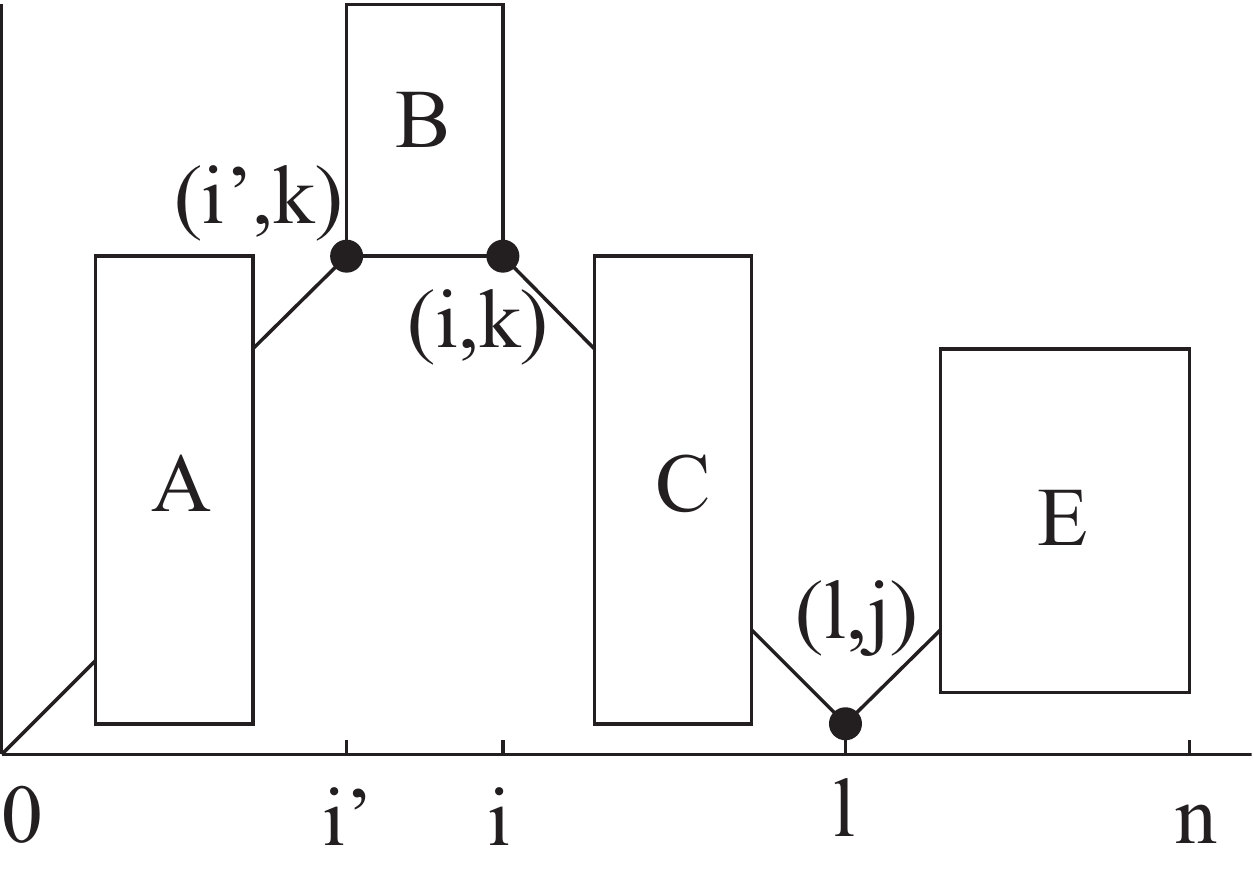}
\includegraphics[width=0.2in, height=1.2in]{arrow.pdf}
\includegraphics[width=1in, height=1.2in]{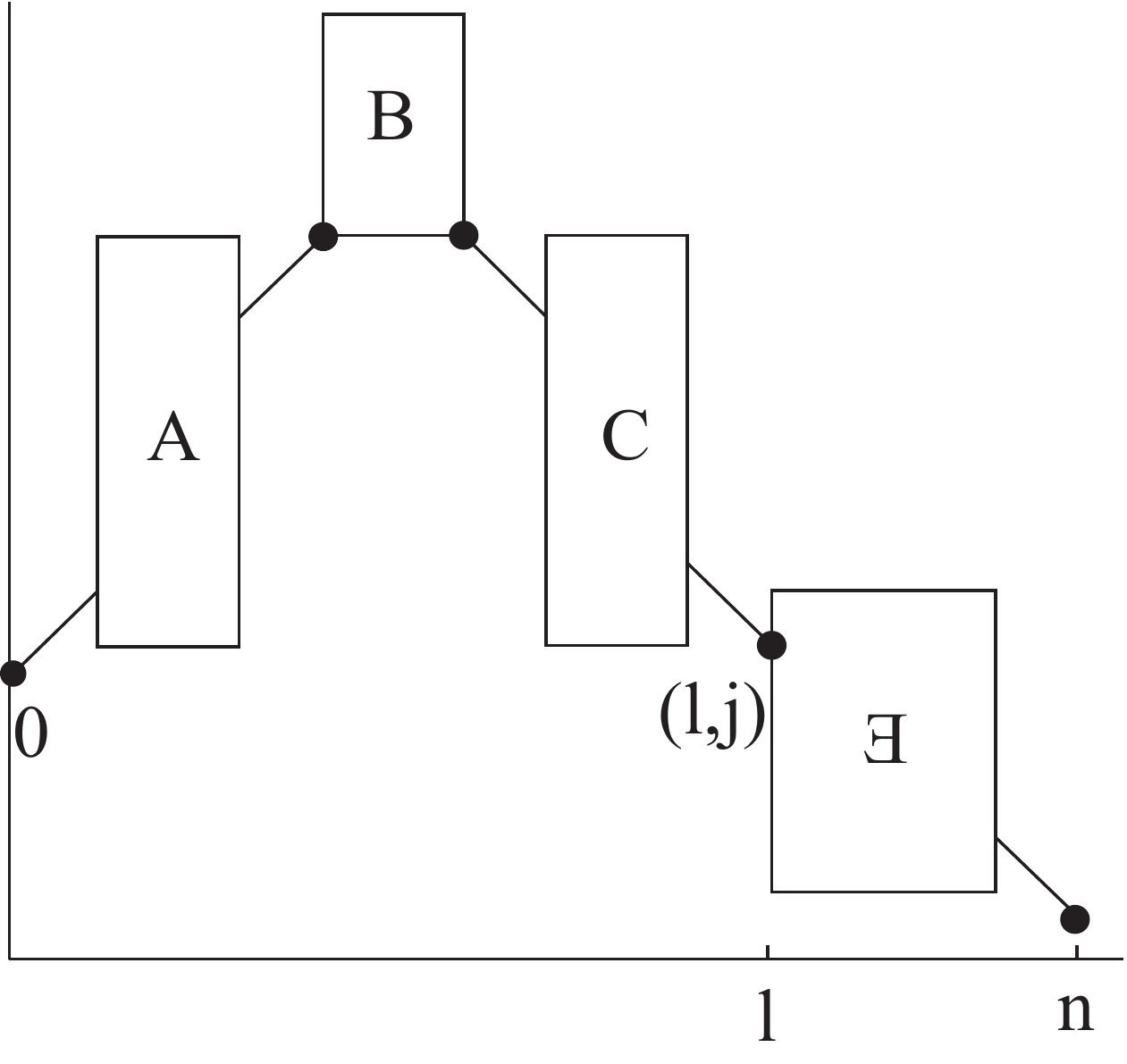}
\includegraphics[width=0.2in, height=1.2in]{arrow.pdf}
\includegraphics[width=1in, height=1.2in]{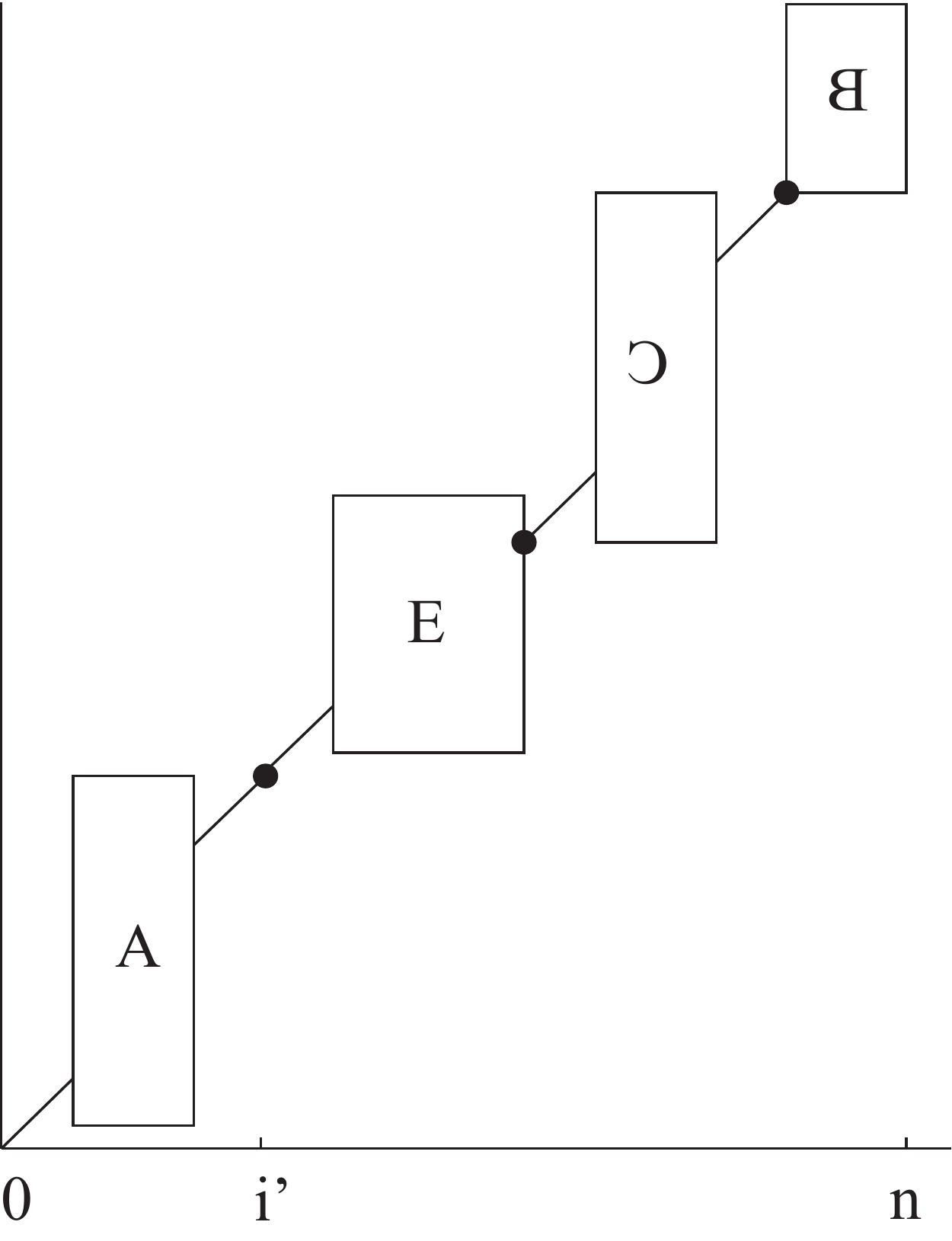}
\includegraphics[width=0.2in, height=1.2in]{arrow.pdf}
\includegraphics[width=1in, height=1.2in]{verification3.pdf}
\caption{First, we do construction of $\Theta$, we get $\bar{D}$ as in $3$rd diagram from $D$ in $1$st diagram. 
Then we do construction of $\Phi$, we get back from $\bar{D}$ to $D$ in $1$st diagram.}
\label{verification}
\end{figure}
We need find $\Phi(\bar{D})$. 
Since in the process of $\Theta$, we shift the path after the $i'$th place to above the line $y=k$,
then the $i'$th place is the largest $t$ such that $a_{t-1}=k-1, a_t=k, a_{t+1}=k+1$.
Thus, $m=i'$.
Now, we cut $\bar{D}$ at $(m,a_m)$, 
then reflect it and glue the resulting path back.
It is easy to see that, 
after the reflection that we did in last step,
$(l,j)$ is the leftmost point that we are looking for in step 3 of the process of $\Phi$, i.e. $(v,a_v)$.
Then after we cut the path in $(l,j)$, we reflect it and glue the resulting path back, which leads to the path $D$.
In the process of $\Theta$, $i'$ is on the left side of $i$ and is the closest to $i$ satisfying $a_{i'-1}=k-1, a_{i'}=k$.
We must choose $m'$ to be on the right side of $m=i'$ and the closest to $m=i'$ satisfying $a_{m'}=k$ and $a_{m'+1}=k-1$.
Since $m=i'$, we have that $m'=i$.
Therefore $\Theta\Phi((D,i))=(D,i)$, as required.
\end{proof}

\subsection{An algebraic proof}\label{subsection2}
Let $C_k(x,q)$ be the generating function for the number of Dyck paths from $(0,0)$ to $(n,0)$ according to the number of steps from height $k$ to height $k-1$, that is,
$$C_k(x,q)=\sum_{n\geq0}\sum_{p\in\mathcal{D}_{(n,0)}}x^nq^{\#st_k(p)},$$
where $st_k(p)$ denotes the number of steps from height $k$ to height $k-1$ in the path $p$. Clearly, the generating function for the number of Dyck paths from $(0,0)$ to $(n,0)$ is given by
\begin{equation}\label{eqlem10}
C_k(x,1)=C(x^2):=\frac{1-\sqrt{1-4x^2}}{2x^2}.
\end{equation}

\begin{prop}\label{lem1}
The generating function $C_k(x,q)$ is given by
$$C_k(x,q)=\frac{U_{k-1}\ttx-qxU_{k-2}\ttx C(x^2)}{x\left[U_k\ttx-qxU_{k-1}\ttx C(x^2)\right]},$$
where $C(x^2)=\frac{1-\sqrt{1-4x^2}}{2x^2}$ and $U_m$ is the $m$-th Chebyshev polynomial of the second kind.
\end{prop}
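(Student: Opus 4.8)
The plan is to realize $C_k(x,q)$ as a finite continued fraction coming from a first–return decomposition of Dyck excursions, and then to evaluate that continued fraction in closed form by iterating a single M\"obius transformation whose matrix powers are governed by the Chebyshev polynomials $U_m$.

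First I would reinterpret the statistic. In any Dyck path a step from height $k$ to height $k-1$ occurs exactly once in each maximal factor that leaves level $k-1$, stays at height $\geq k$, and returns to level $k-1$, because the only step of such a factor landing on height $k-1$ is its last one. Hence $\#st_k(p)$ counts these arches above level $k-1$, and I can track it through the classical first–return decomposition. Let $R_j=R_j(x,q)$ be the generating function for excursions that return to baseline $j$ while staying at height $\geq j$, with each step from $k$ to $k-1$ weighted by $q$. Decomposing at the first return to level $j$ gives
\[
R_j=\frac{1}{1-w_jx^2R_{j+1}},\qquad w_j=\begin{cases}q,& j=k-1,\\ 1,&\text{otherwise,}\end{cases}
\]
where the factor $x^2$ records the first up–step and the matching down–step. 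For $j\geq k$ no marked step can occur, so $R_j=C(x^2)$ there; in particular $R_{k-1}=\dfrac{1}{1-qx^2C(x^2)}$, and $C_k(x,q)=R_0$ is the continued fraction with $k-1$ unmarked levels sitting above this cap.

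Next I would linearize. Each relation $R_j=1/(1-x^2R_{j+1})$ is the M\"obius action of the matrix $M=\left(\begin{smallmatrix}0&1\\-x^2&1\end{smallmatrix}\right)$, so $R_0=M^{k-1}\!\cdot R_{k-1}$. By Cayley--Hamilton, $M^{m+1}=M^m-x^2M^{m-1}$, which is the same recurrence satisfied by $x^mU_m\ttx$ since $U_{m+1}\ttx=\tfrac1xU_m\ttx-U_{m-1}\ttx$. Solving for the four entries with the initial data $M^0=I$ and $M^1=M$ yields
\[
M^{m}=\begin{pmatrix}-x^{m}U_{m-2}\ttx & x^{m-1}U_{m-1}\ttx\\ -x^{m+1}U_{m-1}\ttx & x^{m}U_{m}\ttx\end{pmatrix}.
\]

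Finally I would substitute $m=k-1$ and $R_{k-1}=\dfrac{1}{1-qx^2C(x^2)}$ into the M\"obius action $R_0=\dfrac{aR_{k-1}+b}{cR_{k-1}+d}$, where $a,b,c,d$ are the entries of $M^{k-1}$, and clear denominators. The two leading Chebyshev terms in numerator and denominator then collapse using $xU_{k-1}\ttx=U_{k-2}\ttx-xU_{k-3}\ttx$ and $xU_k\ttx=U_{k-1}\ttx-xU_{k-2}\ttx$, after which a common factor $x^{k-1}$ in the numerator and $x^{k}$ in the denominator cancels to give the stated formula. I expect the main obstacle to be purely bookkeeping: checking that the weight $q$ sits on exactly the right step of the decomposition so that the inner excursions contribute no spurious factors of $q$, and carrying out the Chebyshev simplifications without index slips. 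The sanity check $q=0$, where the formula must collapse to the height-$(k-1)$–bounded Dyck generating function $U_{k-1}\ttx/\bigl(xU_k\ttx\bigr)$, fixes the normalization and guards against such errors.
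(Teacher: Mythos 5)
Your proposal is correct and, at its core, it is the same proof as the paper's: your baseline-indexed series $R_j$ are exactly the paper's $C_{k-j}(x,q)$, so your first-return system $R_j=1/(1-w_jx^2R_{j+1})$ with the $q$-weighted cap at level $k-1$ reproduces precisely the paper's recurrences $C_k=1+x^2C_{k-1}C_k$ (for $k\ge 2$) and $C_1=1+qx^2C(x^2)C_1$, and in both arguments the closed form is driven by the same three-term Chebyshev recurrence. The only difference is packaging: the paper verifies the stated formula by direct induction on $k$, whereas you run the equivalent induction once and for all inside the closed form for the matrix powers $M^m$ (whose entries are exactly the numerator and denominator polynomials appearing at each stage of the paper's induction) and then apply the resulting M\"obius map to the cap $R_{k-1}=1/(1-qx^2C(x^2))$ --- mathematically identical content, with the transfer-matrix bookkeeping in place of the step-by-step induction.
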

\begin{proof}
Since each nonempty Dyck path $p$ in $\mathcal{D}_{n,0}$ can be written as $p=up'dp''$, 
where $p'$ is any $1$-shifted Dyck path and $p''$ is any Dyck path ($u$ denotes up-step and $d$ denotes down-step), 
we obtain
\begin{equation}\label{eqlem11}
C_k(x,q)=1+x^2C_{k-1}(x,q)C_k(x,q),\quad k\geq2
\end{equation}
and
\begin{equation}\label{eqlem12}
C_1(x,q)=1+x^2qC(x^2)C_1(x,q),
\end{equation}
where $1$ enumerates the path of length zero.

We now proceed the proof by induction on $k$. Since $U_{-1}(t)=0$, $U_0(t)=1$ and $U_1(t)=2t$, we obtain that \eqref{eqlem12} implies that the proposition holds for $k=1$. Assuming that the claim holds for $k$, we prove it holds for $k+1$. Using \eqref{eqlem11} together with the induction hypothesis we obtain
\begin{align*}
&C_{k+1}(x,q)\\
&=\frac{1}{1-x^2C_k(x,q)}\\
&=\frac{U_k\ttx-qxU_{k-1}\ttx C(x^2)}{U_k\ttx-qxU_{k-1}\ttx C(x^2)-x(U_{k-1}\ttx-qxU_{k-2}\ttx C(x^2))}\\
&=\frac{U_k\ttx-qxU_{k-1}\ttx C(x^2)}{U_k\ttx-xU_{k-1}\ttx-qx(U_{k-1}\ttx-xU_{k-2}\ttx )C(x^2))}.
\end{align*}
Using the fact that Chebyshev polynomials $U_m(t)$ of the second kind satisfy the recurrence relation $U_m(t)=2tU_{m-1}(t)-U_{m-2}(t)$, we get
\begin{align*}
C_{k+1}(x,q)=\frac{U_k\ttx-qxU_{k-1}\ttx C(x^2)}{xU_{k+1}\ttx-qx^2U_{k}\ttx C(x^2)},
\end{align*}
which completes the proof.
\end{proof}

\begin{cor}\cite{F}
\label{cor1}
The number of steps from height $k$ to height $k-1$ in all Dyck paths from $(0,0)$ to $(2n,0)$ is given by
$$\frac{2k+1}{2n+1}\binom{2n+1}{n+k+1}.$$
\end{cor}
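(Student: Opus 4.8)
The plan is to read off the total count directly from the generating function of Proposition \ref{lem1} by differentiating in the marking variable $q$ and extracting a coefficient. Set $F_k(x)=\frac{\partial}{\partial q}C_k(x,q)\big|_{q=1}$. Because $C_k(x,q)=\sum_n\sum_{p\in\mathcal{D}_{(n,0)}}x^nq^{\#st_k(p)}$, differentiating brings down the factor $\#st_k(p)$ and then setting $q=1$ sums over all paths, so $[x^{2n}]F_k$ is exactly the number of steps from height $k$ to height $k-1$ over all Dyck paths from $(0,0)$ to $(2n,0)$. Thus the first move is to apply the quotient rule to the closed form of Proposition \ref{lem1}, abbreviating $U_j=U_j\ttx$ and $C=C(x^2)$.

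The main computational step is to simplify $F_k$, and here I would exploit that the Chebyshev polynomials are evaluated at $t=\frac1{2x}$. The two roots of $z^2-\frac1x z+1$ are $\lambda=\frac{1+\sqrt{1-4x^2}}{2x}$ and $\mu=\frac{1-\sqrt{1-4x^2}}{2x}$; they satisfy $\lambda\mu=1$ and $\mu=xC$, and one has the Binet form $U_j\ttx=\frac{\lambda^{j+1}-\mu^{j+1}}{\lambda-\mu}$. A short telescoping computation using $\lambda\mu=1$ gives the two identities $U_{k-1}-\mu U_{k-2}=\lambda^{k-1}$, $U_k-\mu U_{k-1}=\lambda^{k}$ and likewise $U_{k-1}-\lambda U_{k-2}=\mu^{k-1}$. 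The first two show that the numerator and denominator of $C_k$ at $q=1$ collapse to $\lambda^{k-1}$ and $x\lambda^{k}$, recovering $C_k(x,1)=C$ (that is, the Catalan equation \eqref{eqlem10}). Feeding these evaluations into the quotient rule, the cross terms cancel thanks to $\lambda\mu=1$ and the third telescoping identity, and I expect everything to collapse to the remarkably simple closed form $F_k=C\mu^{2k}=x^{2k}C(x^2)^{2k+1}$.

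Finally I would extract the coefficient. Writing $z=x^2$ we have $[x^{2n}]F_k=[z^{n-k}]C(z)^{2k+1}$, so I invoke the classical formula for powers of the Catalan series, $[z^m]C(z)^{r}=\frac{r}{2m+r}\binom{2m+r}{m}$, with $r=2k+1$ and $m=n-k$. This yields
\begin{equation}
[x^{2n}]F_k=\frac{2k+1}{2n+1}\binom{2n+1}{n-k}=\frac{2k+1}{2n+1}\binom{2n+1}{n+k+1},\notag
\end{equation}
using $\binom{2n+1}{n-k}=\binom{2n+1}{n+k+1}$, which is the asserted formula (and correctly vanishes when $k>n$, as it must).

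I expect the main obstacle to be the algebraic simplification of the $q$-derivative: a priori $F_k$ is a ratio of Chebyshev expressions whose degrees grow with $k$, and it is not evident that it should telescope to a single monomial times a power of $C$. The key that makes everything work is the relation $\lambda\mu=1$ valid at $t=\frac1{2x}$, which simultaneously identifies $\mu$ with $xC(x^2)$ and forces the cross terms in the quotient rule and in $U_{k-1}-\lambda U_{k-2}$ to cancel; carefully verifying these cancellations (and the signs) is the crux of the argument.
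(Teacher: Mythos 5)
Your proof is correct --- I verified the telescoping identities, the quotient-rule computation, and the coefficient extraction --- but you reach the key identity by a genuinely different route than the paper. Both arguments pivot on the same intermediate fact, namely $\frac{d}{dq}C_k(x,q)\mid_{q=1}=x^{2k}C^{2k+1}(x^2)$ (the paper's equation \eqref{eqco1}), and both finish identically by extracting $[x^{2n}]$ via the formula for coefficients of powers of the Catalan series. The difference is how that identity is obtained. The paper never touches the closed Chebyshev form of Proposition \ref{lem1}: it differentiates the recurrences \eqref{eqlem11} and \eqref{eqlem12} with respect to $q$, sets $q=1$ so that every $C_j(x,1)$ becomes $C(x^2)$, solves the resulting linear relation using $1-x^2C(x^2)=1/C(x^2)$, and inducts on $k$ from the base case $\frac{d}{dq}C_1(x,q)\mid_{q=1}=x^2C^3(x^2)$. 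You instead differentiate the closed form directly and collapse it using the Binet representation $U_j\ttx=\frac{\lambda^{j+1}-\mu^{j+1}}{\lambda-\mu}$ together with $\lambda\mu=1$ and $\mu=xC(x^2)$. The paper's route is shorter and avoids all Chebyshev algebra --- somewhat ironically, its proof of the corollary makes Proposition \ref{lem1} logically unnecessary for this count --- while your route is heavier on computation but self-contained once the closed form is granted, and it exhibits explicitly why the Chebyshev ratio degenerates to a monomial in $\mu$ at $q=1$ (in passing re-deriving $C_k(x,1)=C(x^2)$). Either argument is a complete and valid proof of the corollary.
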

\begin{proof}
From \eqref{eqlem10}, \eqref{eqlem11} and \eqref{eqlem12} we get
$$\frac{d}{dq}C_k(x,q)=x^2C_k(x,q)\frac{d}{dq}C_{k-1}(x,q)+x^2C_{k-1}(x,q)\frac{d}{dq}C_k(x,q)$$
and $\frac{d}{dq}C_1(x,q)\mid_{q=1}=x^2C^3(x^2)$. Hence, by induction on $k$ we have that
\begin{equation}\label{eqco1}
\frac{d}{dq}C_k(x,q)\mid_{q=1}=x^{2k}C^{2k+1}(x^2).
\end{equation}
Since the number of steps from height $k$ to height $k-1$ in all generalized  Dyck paths from $(0,0)$ to $(2n,0)$ equals the coefficient of $x^{2n}$ in the generating function $\frac{d}{dq}C_k(x,q)\mid_{q=1}$, we obtain the desired result by \cite[Equation 2.5.16]{Wilf}.
\end{proof}

Let $C_{k,h}(x)$ be the generating function for the number of generalized Dyck paths from $(0,0)$ to $(n,h)$ according to the number steps from height $k$ to height $k-1$, that is,
$$C_{k,h}(x,q)=\sum_{n\geq0}\sum_{p\in\mathcal{D}_{(n,h)}}x^nq^{\#st_k(p)}.$$
In order to an give explicit formula for the generating function $C_{k,h}(x,q)$, we consider the following two cases $k>h$ and $k\leq h$ as follows.

\subsubsection{The case $k>h$}
In this subsection we fix $k$ where $k>h$. 
Using the fact that each nonempty generalized Dyck path $p$ in $\mathcal{D}_{n,h}$ can be decomposed as either $p=up'$ where $p'$ is a $1$-shifted generalized Dyck path from $(0,0)$ to $(n-1,h-1)$, 
or $p'=up'dp''$, where $p'$ is a $1$-shifted Dyck path from $(0,0)$ to $(n',0)$ and $p''$ is a generalized Dyck path from $(n'+1,0)$ to $(n,h)$, 
we obtain
$$C_{k,h}(x,q)=xC_{k-1,h-1}(x,q)+x^2C_{k-1}(x,q)C_{k,h}(x,q),\quad h\geq1$$
and
$$C_{k,0}(x,q)=1+x^2C_{k-1}(x,q)C_{k,0}(x,q).$$
By induction on $h$ we get that the generating function $C_{k,h}(x,q)$ which is given by
\begin{equation}\label{eql23}
C_{k,h}(x,q)=\frac{x^h}{\prod_{j=k-1-h}^{k-1}1-x^2C_j(x,q)}.
\end{equation}

\begin{thm}\label{th2}
Let $k>h\geq0$. The number of steps from height $k$ to height $k-1$ in all generalized Dyck paths from $(0,0)$ to $(n,h)$ is given by
\begin{align*}
\binom{n}{\frac{1}{2}(n-h)+k}-\binom{n}{\frac{1}{2}(n+h)+k+1},
\end{align*}
where $\binom{a}{b}$ is assumed to be $0$ if $a<b$ or if $a,b$ are not nonnegative integers.
\end{thm}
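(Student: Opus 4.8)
The plan is to read off the desired count as the coefficient of $x^n$ in $\frac{\partial}{\partial q}C_{k,h}(x,q)\big|_{q=1}$, exactly as in the proof of Corollary \ref{cor1}, and to evaluate this derivative using the closed product formula \eqref{eql23}. First I would take the logarithmic $q$-derivative of \eqref{eql23}. Since the numerator $x^h$ does not depend on $q$, this gives
\begin{equation}
\frac{\partial_q C_{k,h}(x,q)}{C_{k,h}(x,q)}=\sum_{j=k-1-h}^{k-1}\frac{x^2\,\partial_q C_j(x,q)}{1-x^2C_j(x,q)},
\notag
\end{equation}
so the problem is reduced to evaluating each summand at $q=1$, where the data are already available.

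Next I would specialize to $q=1$. By \eqref{eqlem10} we have $C_j(x,1)=C(x^2)=:C$ for every $j$, and the proof of Corollary \ref{cor1} supplies $\partial_q C_j(x,q)\big|_{q=1}=x^{2j}C^{2j+1}$ via \eqref{eqco1}. The one extra ingredient is the identity $1-x^2C=1/C$, which follows from the defining relation $C=1+x^2C^2$ of the Catalan series (equivalently, from the explicit radical in \eqref{eqlem10}). The same identity shows that the product of $h+1$ equal factors gives $C_{k,h}(x,1)=x^h\prod_{j=k-1-h}^{k-1}(1-x^2C)^{-1}=x^hC^{h+1}$, which is precisely the generating function $\sum_n|\cd_{(n,h)}|x^n$; more generally $x^aC^{a+1}$ enumerates $\cd_{(n,a)}$. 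Substituting everything back and reindexing by $m=j+1$ (so $m$ runs from $k-h$ to $k$), I expect
\begin{equation}
\frac{\partial}{\partial q}C_{k,h}(x,q)\Big|_{q=1}=x^hC^{h+1}\sum_{m=k-h}^{k}(xC)^{2m}=\sum_{m=k-h}^{k}x^{\,h+2m}C^{\,h+2m+1}.
\notag
\end{equation}

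I would then recognize each term $x^{h+2m}C^{h+2m+1}$ as the generating function for $\cd_{(n,h+2m)}$, so that extracting $[x^n]$ and applying the reflection formula \eqref{cardofdyckpath} yields the count as $\sum_{m=k-h}^{k}\big(\binom{n}{\frac{n+h}{2}+m}-\binom{n}{\frac{n+h}{2}+m+1}\big)$. This sum telescopes to
\begin{equation}
\binom{n}{\tfrac{n+h}{2}+(k-h)}-\binom{n}{\tfrac{n+h}{2}+k+1}=\binom{n}{\tfrac12(n-h)+k}-\binom{n}{\tfrac12(n+h)+k+1},
\notag
\end{equation}
which is the claimed formula. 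As a consistency check, this reproduces the range $\bigcup_{j=0}^{s}\cd_{(n,2k-2j+h)}$ with $s=\min\{k-1,h\}=h$ from the geometric proof, since the summation heights $h+2m$ run over exactly $2(k-h)+h,\dots,2k+h$.

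The routine parts are the differentiation and the telescoping; the main obstacle I anticipate is purely bookkeeping, namely pinning down that the product in \eqref{eql23} has exactly $h+1$ factors and that the reindexing of the summation range matches the binomial shifts so that the telescoping collapses to two terms. The one genuinely structural point to get right is the identity $1-x^2C=1/C$, which is what simultaneously simplifies the denominators and forces $C_{k,h}(x,1)=x^hC^{h+1}$; once it is in hand, the rest of the computation is forced. Note also that this argument uses $k>h$ only through $s=h$, i.e. through the product having $h+1$ factors and the lowest height $h+2(k-h)=2k-h$ remaining nonnegative.
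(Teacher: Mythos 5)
Your proposal is correct and follows essentially the same route as the paper: differentiate the product formula \eqref{eql23} at $q=1$, simplify using \eqref{eqlem10} and \eqref{eqco1} (together with $1-x^2C(x^2)=1/C(x^2)$, which the paper uses implicitly), and finish by telescoping a sum of differences of binomial coefficients. The only cosmetic difference is the final coefficient extraction, where you read each term $x^{h+2m}C^{h+2m+1}(x^2)$ as the generating function of $\cd_{(n,h+2m)}$ and apply the reflection formula \eqref{cardofdyckpath}, while the paper extracts coefficients via Wilf's formula and the identity \eqref{binomequation}; both collapse to the same telescoping sum and the same answer.
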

\begin{proof}
From \eqref{eql23}, we obtain that the number of steps from height $k$ to height $k-1$ in all generalized Dyck paths from $(0,0)$ to $(n,h)$ is given by
\begin{align*}
&[x^n]\left(\frac{d}{dq}C_{k,h}(x,q)\right)_{q=1}\\
&=[x^n]\left[\frac{x^h}{\prod_{j=k-1-h}^{k-1}1-x^2C_j(x,1)}\sum_{j=k-1-h}^{k-1}\frac{x^2\frac{d}{dq}C_j(x,q)\mid_{q=1}}{1-x^2C_j(x,1)}\right],
\end{align*}
which, by \eqref{eqlem10} and \eqref{eqco1}, is equivalent to
$$[x^n]\left(\frac{d}{dq}C_{k,h}(x,q)\right)_{q=1}=[x^n]\left(x^{h+2}C^{h+2}(x^2)\sum_{j=k-1-h}^{k-1}x^{2j}C^{2j+1}(x^2)\right).$$
Hence, by \cite[Equation 2.5.16]{Wilf} we get
\begin{align*}
&[x^n]\left(\frac{d}{dq}C_{k,h}(x,q)\right)_{q=1}\\
&=\sum_{j=k-1-h}^{k-1}\frac{2j+h+3}{n+1}\binom{n+1}{\frac{n-2j-h-2}{2}}\\
&=\sum_{j=k-1-h}^{k-1}\binom{n}{\frac{n-2j-h-2}{2}}-\binom{n}{\frac{n-2j-h-4}{2}}\\
&=\binom{n}{\frac{1}{2}(n-h)+k}-\binom{n}{\frac{1}{2}(n+h)+k+1}
\end{align*}
as claimed.
In the second equality, we use
\begin{equation}
\binom{n}{k}-\binom{n}{k-1}=\frac{n-2k+1}{n+1}\binom{n+1}{k}.
\label{binomequation}
\end{equation}
\end{proof}

\subsubsection{The case $k\leq h$}
In this subsection we fix $k$ where $k\leq h$. Using similar arguments as discussed in the above subsection we get that the generating function $C_{k,h}(x,q)$ satisfies
$$C_{k,h}(x,q)=xC_{k-1,h-1}(x,q)+x^2C_{k-1}(x,q)C_{k,h}(x,q),\quad h\geq1$$
and
$$C_{0,h}(x,q)=xC_{0,h-1}(x,q)+x^2C(x^2)C_{0,h}(x,q).$$
By induction on $h$ we get $C_{0,h}(x,q)=x^hC^{h+1}(x^2)$, which, by induction on $k$, implies that
\begin{equation}\label{eql33}
C_{k,h}(x,q)=\frac{x^hC^{h-k+1}(x^2)}{\prod_{j=0}^{k-1}1-x^2C_j(x,q)}.
\end{equation}

\begin{thm}\label{th3}
Let $1\leq k\leq h$. The number of steps from height $k$ to height $k-1$ in all generalized Dyck paths from $(0,0)$ to $(n,h)$ is given by
\begin{align*}
\binom{n}{\frac{n+h}{2}+1}-\binom{n}{\frac{n+h}{2}+k+1},
\end{align*}
where $\binom{a}{b}$ is assumed to be $0$ if $a<b$ or if $a,b$ are not nonnegative integers.
\end{thm}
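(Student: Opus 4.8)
```latex
The plan is to mirror the structure of the proof of Theorem \ref{th2} for the case $k>h$, adapting it to the regime $1\leq k\leq h$ where the generating function $C_{k,h}(x,q)$ is governed by the formula \eqref{eql33} rather than \eqref{eql23}. First I would extract the quantity we want, namely the total number of $k$-down steps among all generalized Dyck paths from $(0,0)$ to $(n,h)$, as the coefficient $[x^n]\left(\frac{d}{dq}C_{k,h}(x,q)\right)_{q=1}$. Applying the product/quotient differentiation rule to \eqref{eql33}, and observing that the prefactor $x^hC^{h-k+1}(x^2)$ does not depend on $q$, the derivative at $q=1$ becomes
\begin{align*}
\left(\frac{d}{dq}C_{k,h}(x,q)\right)_{q=1}
=\frac{x^hC^{h-k+1}(x^2)}{\prod_{j=0}^{k-1}\bigl(1-x^2C_j(x,1)\bigr)}\sum_{j=0}^{k-1}\frac{x^2\frac{d}{dq}C_j(x,q)\mid_{q=1}}{1-x^2C_j(x,1)}.
\end{align*}

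Next I would substitute the two key identities already established earlier: the evaluation $1-x^2C_j(x,1)=C(x^2)^{-1}$ coming from \eqref{eqlem10}–\eqref{eqlem11} (equivalently $C_j(x,1)=C(x^2)$ for all $j$, so the denominator collapses to a power of $C(x^2)$), and the derivative formula \eqref{eqco1}, namely $\frac{d}{dq}C_j(x,q)\mid_{q=1}=x^{2j}C^{2j+1}(x^2)$. After these substitutions the whole expression should simplify, exactly as in Theorem \ref{th2}, to a clean sum
\begin{align*}
\left(\frac{d}{dq}C_{k,h}(x,q)\right)_{q=1}
=x^{h+2}C^{h+2}(x^2)\sum_{j=0}^{k-1}x^{2j}C^{2j+1}(x^2).
\end{align*}
The only structural difference from the $k>h$ case is the range of the summation index (here $0\leq j\leq k-1$, there $k-1-h\leq j\leq k-1$), which reflects that when $k\leq h$ the path cannot descend below height $0$, so the lower summation bound is pinned at $j=0$ rather than $j=k-1-h$.

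I would then extract coefficients termwise using the Lagrange-type expansion for powers of the Catalan generating function, invoking \cite[Equation 2.5.16]{Wilf} just as in the previous theorem, to rewrite each summand $[x^n]\bigl(x^{2j+h+2}C^{2j+h+3}(x^2)\bigr)$ as $\frac{2j+h+3}{n+1}\binom{n+1}{\frac{n-2j-h-2}{2}}$, and then convert this to a difference of two binomial coefficients $\binom{n}{\frac{n-2j-h-2}{2}}-\binom{n}{\frac{n-2j-h-4}{2}}$ via the identity \eqref{binomequation}. The sum over $j$ from $0$ to $k-1$ should then telescope, leaving only the two boundary terms. The main thing to watch is the telescoping bookkeeping and the reindexing: the top boundary term (at $j=0$) gives $\binom{n}{\frac{n-h-2}{2}}$, which under the symmetry $\binom{n}{m}=\binom{n}{n-m}$ is $\binom{n}{\frac{n+h}{2}+1}$, and the bottom term (at $j=k-1$) gives $\binom{n}{\frac{n-2k-h-2}{2}}=\binom{n}{\frac{n+h}{2}+k+1}$, so the answer is $\binom{n}{\frac{n+h}{2}+1}-\binom{n}{\frac{n+h}{2}+k+1}$ as claimed. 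The step I expect to require the most care is verifying that the telescope closes correctly after the binomial-symmetry flips, since the index conventions differ from the $k>h$ case and it is easy to be off by one; the algebraic simplification of the denominator and the coefficient extraction itself are routine given \eqref{eqco1} and \cite[Equation 2.5.16]{Wilf}.
```
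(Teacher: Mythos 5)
Your proposal is correct and follows essentially the same route as the paper's own proof: differentiate \eqref{eql33} at $q=1$, collapse the denominator using $1-x^2C_j(x,1)=C^{-1}(x^2)$ from \eqref{eqlem10}, substitute \eqref{eqco1}, extract coefficients via \cite[Equation 2.5.16]{Wilf} and \eqref{binomequation}, and telescope with a final binomial-symmetry flip. Your intermediate form $x^{h+2}C^{h+2}(x^2)\sum_{j=0}^{k-1}x^{2j}C^{2j+1}(x^2)$ is just a different factoring of the paper's $x^{h+2}C^{h+3}(x^2)\sum_{j=0}^{k-1}x^{2j}C^{2j}(x^2)$; the two are identical term by term.
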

\begin{proof}
From \eqref{eql33}, we obtain that the number of steps from height $k$ to height $k-1$ in all generalized Dyck paths from $(0,0)$ to $(n,h)$ is given by
\begin{align*}
&[x^n]\left(\frac{d}{dq}C_{k,h}(x,q)\right)_{q=1}\\
&=[x^n]\left[\frac{x^hC^{h-k+1}(x^2)}{\prod_{j=0}^{k-1}1-x^2C_j(x,1)}\sum_{j=0}^{k-1}\frac{x^2\frac{d}{dq}C_j(x,q)\mid_{q=1}}{1-x^2C_j(x,1)}\right],
\end{align*}
which, by \eqref{eqlem10} and \eqref{eqco1}, is equivalent to
$$[x^n]\left(\frac{d}{dq}C_{k,h}(x,q)\right)_{q=1}=[x^n]\left(x^{h+2}C^{h+3}(x^2)\sum_{j=0}^{k-1}x^{2j}C^{2j}(x^2)\right).$$
Hence, by \cite[Equation 2.5.16]{Wilf} we obtain
\begin{align*}
&[x^n]\left(\frac{d}{dq}C_{k,h}(x,q)\right)_{q=1}\\
&=\sum_{j=0}^{k-1}\frac{h+2j+3}{n+1}\binom{n+1}{\frac{n-2j-h-2}{2}}\\
&=\sum_{j=0}^{k-1}\binom{n}{\frac{n-2j-h-2}{2}}-\binom{n}{\frac{n-2j-h-4}{2}}\\
&=\binom{n}{\frac{n+h}{2}+1}-\binom{n}{\frac{n+h}{2}+k+1}
\end{align*}
as claimed.
In the second equality, we use the Equation \ref{binomequation}.
\end{proof}

\section{A new semi-meander determinant}\label{ANSD}
In \cite{F}, Di Francesco defined a {\em semi-meander determinant}.
Here, we will present a different bilinear form on the same module.
We will calculate the Gram determinant of this new form with respect to a natural basis.

\begin{de}\cite{F}
A semi-meander of order $n$ with winding number $h$ is a planar configuration of non-selfintersecting loops
crossing the positive half line through $n$ distinct points and negative half line through $h$ distinct points such that no two points from the set of $h$ points are contiguous on a loop.
We consider such diagrams up to smooth deformations preserving the topology of the configuration.
\end{de}

We can cut the semi-meander into an upper and a lower diagram as described in Figure \ref{semimeander}.
\begin{figure}[h]
\includegraphics[width=1.2in, height=1in]{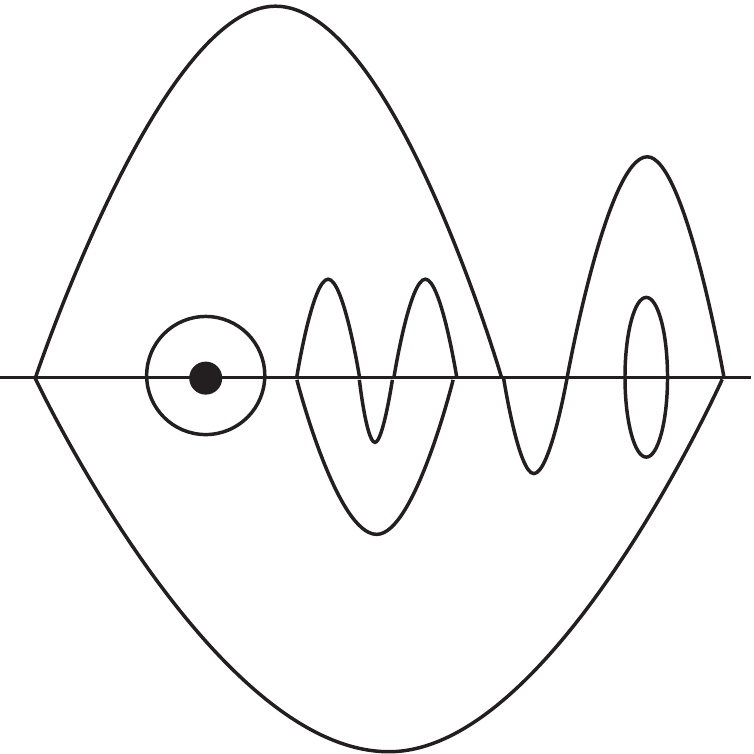}
\includegraphics[width=0.2in, height=1in]{varrow.pdf}
\includegraphics[width=1.2in, height=1in]{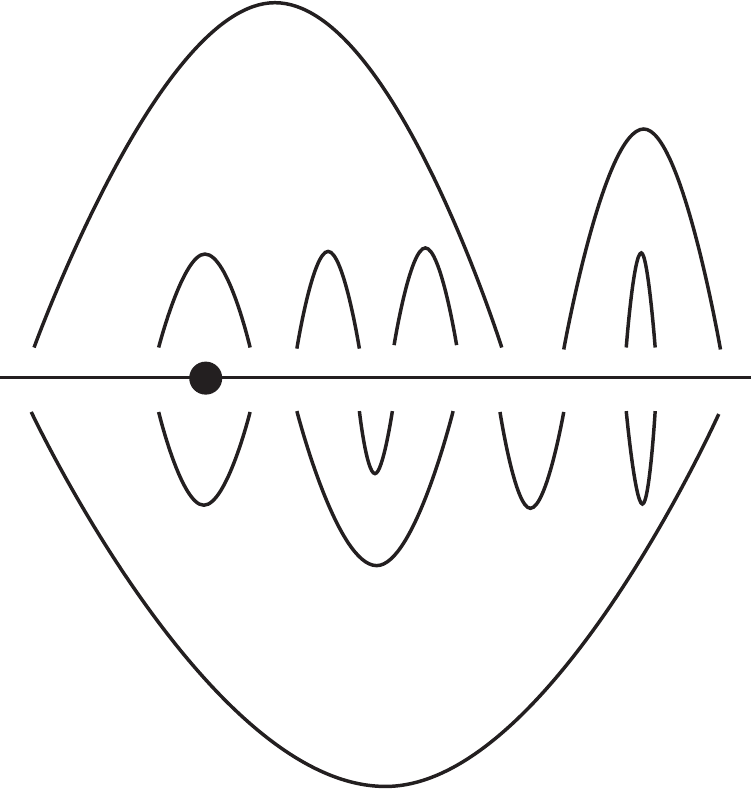}
\includegraphics[width=0.2in, height=1in]{varrow.pdf}
\includegraphics[width=1.2in, height=1in]{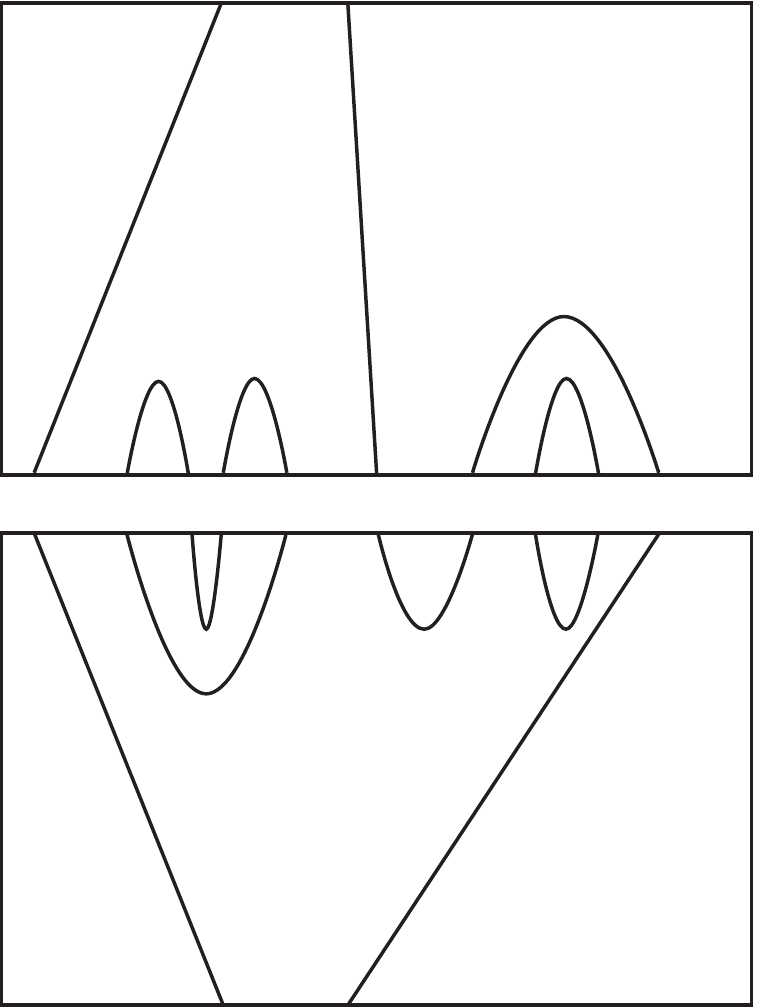}
\caption{A semi-meander of order $n=10$ with winding number $h=2$}
\label{semimeander}
\end{figure}

\begin{de}
\label{epsilonnh}
Let $B'_{a_1,\ldots,a_n}$ be the diagram $B_{a_1,\ldots,a_n}$ with the idempotent on $a_n$ removed.
We denote $\ce_n^h$ to be $span_{\Lambda}\{B'_{a_1,\ldots,a_n}\}_{(a_1,\ldots,a_n)}\subset\cs(I\times I,n+h)$.
\end{de}

In \cite{F}, Di Francesco defined the following matrix:
\begin{de}\cite{F}
$T=[T_{a,b}]$ with $T_{a,b}=\delta^{c(a,b)}$,
where $a,b\in\{B'_{a_1,\ldots,a_n}\}_{(a_1,\ldots,a_n)}$ and $c(a,b)$ is the number of the components of the semimeander by gluing $a$ to $b$.
\end{de}

\begin{rem}
We can define a bilinear form on $\ce_n^h$ by extending the map $f(a,b)=\delta^{c(a,b)}$ with $a,b\in\{B'_{a_1,\ldots,a_n}\}_{(a_1,\ldots,a_n)}$ bilinearly.
The matrix $T$ defined by Di Francesco is the matrix of this bilinear form with respect to the basis $a,b\in\{B'_{a_1,\ldots,a_n}\}_{(a_1,\ldots,a_n)}$.
\end{rem}

Now, we define a different matrix as follows:
\begin{de}
Let $S=[S_{a,b}]$ with $S_{a,b}=\delta^{c(a,b)}$,
where $a,b\in\{B'_{a_1,\ldots,a_n}\}_{(a_1,\ldots,a_n)}$,
and $c(a,b)$ is the number of the components of the semi-meander obtained by gluing $a$ to $b$
if the $h$ intersection points on the negative half line belong to $h$ distinct components of the resulting collection of loops;
otherwise, $S_{a,b}$ is $0$.
\end{de}

\begin{thm} 
We have
\begin{equation}
\det(S)=(\frac{\Delta_1^h}{\Delta_h})^{|\cd_n^h|}\det(B)
=\Delta_1^{|\cd_n^h|}\prod_{k}(\frac{\Delta_k}{\Delta_{k-1}})^{\alpha^k_{(n,h)}}.
\notag
\end{equation}
\end{thm}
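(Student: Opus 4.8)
The plan is to reduce the entire statement to a single entrywise identity between the matrices $S$ and $B$. Both matrices are indexed by the same set of tuples $(a_1,\ldots,a_n)$ satisfying Remark \ref{conditions}: the rows and columns of $B$ are the elements $B_{a_1,\ldots,a_n}\in\cb_n^h$, which carry the idempotent $f_h$ on the strand colored $a_n=h$, while those of $S$ are the elements $B'_{a_1,\ldots,a_n}\in\ce_n^h$, the same diagrams with that idempotent deleted. Hence both are $N\times N$ matrices with $N=|\cb_n^h|=|\cd_n^h|$, and it suffices to establish
\[
G(B_{a_1,\ldots,a_n},B_{b_1,\ldots,b_n})=\frac{\Delta_h}{\Delta_1^h}\,S_{a,b}.
\]
Granting this, $B=\frac{\Delta_h}{\Delta_1^h}S$, so $\det(B)=\left(\frac{\Delta_h}{\Delta_1^h}\right)^{|\cd_n^h|}\det(S)$, which rearranges to $\det(S)=\left(\frac{\Delta_1^h}{\Delta_h}\right)^{|\cd_n^h|}\det(B)$. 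The second displayed equality of the theorem then follows by substituting the value of $\det(B)$ computed in Theorem \ref{main}.

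For the entrywise identity I would argue skein-theoretically. By Proposition \ref{bilinearform}, $G(B_a,B_b)$ is the Kauffman bracket of the diagram obtained by gluing $B_a$ to the reflection of $B_b$ along the $n$ points and closing the two points colored $h$; the two copies of $f_h$ produced this way become adjacent and collapse to one by $f_hf_h=f_h$ of Proposition \ref{JWI}. Thus $G(B_a,B_b)$ is exactly the evaluation of the semi-meander that computes $S_{a,b}$, except that a single $f_h$ is inserted on the bundle of $h$ strands crossing the negative half-line. I would then split according to the definition of $S$. If two of the $h$ crossing points lie on a common component, then after isotopy two of the $h$ strands entering $f_h$ are joined by an arc on one side of the box, so the surrounding diagram presents a cap or cup adjacent to the idempotent; since $f_he_i=e_if_h=0$, such a turnback annihilates $f_h$ and the bracket vanishes, matching $S_{a,b}=0$. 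If instead the $h$ points lie on $h$ distinct components, then each of the $h$ strands closes into its own loop through $f_h$, i.e. the $2h$ endpoints of the box are joined by the straight (identity) closure. The $h$ loops through the box then contribute the closure of $f_h$, namely $\Delta_h$ by the Remark following Proposition \ref{JWI}, while the remaining $c(a,b)-h$ loops contribute $\delta^{c(a,b)-h}$, so that
\[
G(B_a,B_b)=\Delta_h\,\delta^{c(a,b)-h}=\frac{\Delta_h}{\Delta_1^{h}}\,\delta^{c(a,b)}=\frac{\Delta_h}{\Delta_1^h}\,S_{a,b},
\]
using $\Delta_1=\delta$.

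The step I expect to be the main obstacle is showing that the two cases above are genuinely exhaustive in the precise form required, namely that ``the $h$ points fail to lie on distinct components'' is equivalent to ``a turnback adjacent to $f_h$ appears.'' The danger is a component threading two of the $h$ strands through the box via a nontrivial winding permutation with no adjacent cap or cup: such a configuration would be non-distinct, forcing $S_{a,b}=0$, yet need not be killed by $f_h$, which would destroy the entrywise identity. I would rule this out using the explicit planar structure of the diagrams $B'_{a_1,\ldots,a_n}$: the $h$ strands glue straight across the interface between $B'_a$ and the reflected $B'_b$, and each body is a crossing-free diagram in a disk, so a noncrossing matching there is order-preserving on its through-strands. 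Consequently, whenever no top strand of a body turns back, the permutation induced on the $h$ strands is the identity, leaving precisely the identity closure and excluding windings; and whenever some top strand does turn back, this is exactly the adjacent turnback that annihilates $f_h$. Making this planarity argument airtight — tracking how through-strands and turnbacks in the two bodies compose across the $n$ glued points — is the technical heart of the proof, after which the determinant computation and the comparison with Theorem \ref{main} are immediate.
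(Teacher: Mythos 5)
Your proposal is correct and follows essentially the same route as the paper: the paper likewise reduces everything to an entrywise proportionality between $G$ on the $B_{a_1,\ldots,a_n}$'s and the semi-meander pairing (phrased there as a bilinear form $L$ on $\ce_n^h$ whose matrix is $S$) and then cites Theorem \ref{main}, while your skein-theoretic justification --- merging the two copies of $f_h$, turnbacks annihilating the idempotent, and planarity forcing the identity closure so that non-distinct components cannot hide a nontrivial winding --- is exactly the content the paper dismisses as ``easy to see.'' Two discrepancies with the paper are worth recording: your constant $\frac{\Delta_h}{\Delta_1^h}$ is the correct one, consistent with the theorem's first equality, whereas the paper's proof writes the reciprocal relation $G=\frac{\Delta_1^h}{\Delta_h}L$ (evidently a typo); and substituting Theorem \ref{main} into the first equality actually yields $\Delta_1^{h|\cd_n^h|}\prod_{k}(\frac{\Delta_k}{\Delta_{k-1}})^{\alpha^k_{(n,h)}}$ rather than $\Delta_1^{|\cd_n^h|}\prod_{k}(\frac{\Delta_k}{\Delta_{k-1}})^{\alpha^k_{(n,h)}}$, so the exponent in the theorem's second displayed equality is itself a typo, which your final ``follows by substitution'' step silently reproduces.
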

\begin{proof}
Let $\ce_n^h$ be the subspace of $\cs(D^2,n+h)$ defined in Definition \ref{epsilonnh}.
Just as in $TL_n$, we define a map $L$ on $\{B'_{a_1,\ldots,a_n}\}_{(a_1,\ldots,a_n)}\times\{B'_{a_1,\ldots,a_n}\}_{(a_1,\ldots,a_n)}$ by connecting two elements in $\{B'_{a_1,\ldots,a_n}\}_{(a_1,\ldots,a_n)}$ with $n+h$ parallel strings.
If the $h$ points on $I\times\{1\}$ belong to $h$ different components, then we evaluate the resulting diagram by Kauffman bracket. 
Otherwise, we make it $0$.
Then we extend this map to a bilinear form on $\ce_n^h$.
It is easy to see that the matrix of $L$ with respect to the basis $\{B'_{a_1,\ldots,a_n}\}_{(a_1,\ldots,a_n)}$ is equal to $S$.
Moreover,
\begin{equation}
G(B_{(a_1,\ldots,a_n)},B_{(b_1,\ldots,b_n)})=\frac{\Delta_1^h}{\Delta_h}L(B'_{(a_1,\ldots,a_n)},B'_{(b_1,\ldots,b_n)}),
\notag
\end{equation}
for all $(a_1,\ldots,a_n),(b_1,\ldots,b_n)$. 
Then the result follows easily from Theorem \ref{main}.
\end{proof}

\section*{Acknowledgements}
The first author thanks his advisor Professor Gilmer for helpful discussions.
The first author was partially supported by research assistantship funded by NSF-DMS-0905736.


\end{document}